\documentclass[]{article}

\usepackage[english]{babel}
\usepackage{amsmath,amssymb,amsthm}
\usepackage{xfrac,enumitem,bm}

\usepackage{tikz}
\usetikzlibrary{shapes.geometric}
\usepackage{pgfplots}
\usepackage{soul,xcolor}
\setstcolor{red}
\setul{}{2pt}

\usepackage{graphicx}
\usepackage{mathtools}
\usepackage{booktabs}
\usepackage{xspace}
\usepackage[hidelinks]{hyperref}

\usepackage{chngcntr}
\usepackage{cases}
\usepackage{cleveref}

\usepackage{adjustbox}
\usepackage{comment}

\usepackage{subfig}

\newenvironment{cpf}{\begin{trivlist} \item[] {\em Proof of Claim.}}{\hspace*{\stretch{1}} $\diamond$ \end{trivlist}}

\DeclareMathOperator{\cone}{cone} 
\DeclareMathOperator{\conv}{conv} 
\DeclareMathOperator{\cl}{cl} 
\DeclareMathOperator{\inte}{int} 
\DeclareMathOperator{\relint}{relint}

\newcommand{\mbb}[1]{\mathbb{#1}}
\newcommand{\mbfs}[1]{\boldsymbol{#1}}
\newcommand{\mcf}[1]{\mathcal{#1}}

\definecolor{MediumRed}{rgb}{0.925, 0.345, 0.345}
\definecolor{MediumGreen}{rgb}{0.37, 0.7, 0.66}
\definecolor{MediumBlue}{rgb}{0.015, 0.315, 0.45}
\definecolor{MediumPurple}{RGB}{153, 102, 203}
\definecolor{JungleGreen}{rgb}{0.16, 0.67, 0.53}

\newtheorem{rmk}{Remark}
\newtheorem{cm}{Claim}
\newtheorem{theorem}{Theorem}
\newtheorem{lemma}{Lemma}
\newtheorem{definition}{Definition}

\title{A characterization of maximal inhomogeneous-quadratic-free sets}
\author{
  Gonzalo Mu\~noz\thanks{Department of Industrial Engineering, Universidad de Chile, Santiago, Chile (gonzalo.m@uchile.cl)}
  \and Joseph Paat\thanks{Sauder School of Business, University of British Columbia, Vancouver
BC, Canada (joseph.paat@ubc.ca)}
  \and Felipe Serrano\thanks{COPT GmbH, Berlin, Germany (serrano@copt.de)}
}

\date{}

\begin{document}
\maketitle

\begin{abstract}
The intersection cut framework is a versatile tool for generating valid inequalities in optimization.
Its main ingredients are so-called $S$-free sets: convex sets whose interiors do not intersect a given set $S$.
Among these, inclusion-wise maximal $S$-free sets are particularly important, as they yield the strongest intersection cuts.
In the integer programming setting, maximal lattice-free sets are well studied and admit explicit characterizations.
In the quadratic optimization context, Mu\~noz, Paat, and Serrano (2025) characterized maximal $S$-free sets when $S$ is defined by a homogeneous quadratic inequality.
In this work, we characterize maximal $S$-free sets when $S$ is defined by an inhomogeneous quadratic inequality.
As in the homogeneous case, our characterization is built using non-expansive functions.
Together with the results in the homogeneous case, our results complete a characterization of \emph{every} maximal quadratic-free set via non-expansive functions.
\end{abstract}%

\section{Introduction}

Let $S \subseteq \mbb{R}^d$ be a closed set.
 A closed convex set $C \subseteq \mbb{R}^d$ is {\bf $\bm S$-free} if the interior of $C$ is disjoint from $S$.
An $S$-free set is {\bf maximal} $S$-free if it is not a strict subset of another $S$-free set. 

A major application of maximal $S$-free sets is the generation of cutting planes in optimization. Such cuts are created using the intersection cut framework, which we describe next. Consider an optimization problem
\[
\min \{\mbfs{c}{}^\top \mbfs{s}:\ \mbfs{s} \in P \cap S\},
\]
where $\mbfs{c} \in \mbb{R}^d$ and $P \subseteq \mbb{R}^d$ is a closed set that models constraints aside from inclusion in $S$.
Let $\overline{\mbfs{s}} \not\in S$ be a vertex of a polyhedral relaxation of $P \cap S$.
The intersection cut framework constructs a valid linear inequality for $P \cap S$ that strictly separates $\overline{\mbfs{s}}$, using an $S$-free set $C$ such that $\overline{\mbfs{s}}$ is in the interior of $C$.
Among all intersection cuts, those derived from maximal $S$-free sets yield the strongest cuts. 
For a general reference on intersection cuts including their precise derivation from an $S$-free set, we refer the reader to~\cite[Chapter 6]{CCZ2014}.

In this article, we study maximal $Q_g$-free sets, where
\[
Q_g := Q_g(\mbfs{a}, \mbfs{d}) := \{ (\mbfs{x},\mbfs{y}) \in \mbb{R}^n \times \mbb{R}^m :\ \|\mbfs{x}\| \le \|\mbfs{y}\|~\text{and}~ \mbfs{a}{}^\top\mbfs{x} + \mbfs{d}{}^\top\mbfs{y} = -1\}
\]
for $(\mbfs{a}, \mbfs{d}) \in \mbb{R}^n \times \mbb{R}^m$ with $n,m\geq 1$ and $\max\{\|\mbfs{a}\|, \|\mbfs{d}\|\} = 1$. 
Here, $\|\cdot\|$ denotes the $\ell_2$-norm.
Together with maximal $Q_h$-free sets, where
\[
Q_h := \left\{ (\mbfs{x},\mbfs{y}) \in \mbb{R}^n \times \mbb{R}^m : \|\mbfs{x}\| \le \|\mbfs{y}\|\right\},
\]
the study of maximal $Q_g$-free sets is sufficient to describe \emph{every} maximal quadratic-free set, that is, $S$-free sets when $S$ has the form
\[
\{ \mbfs{s} \in \mbb{R}^{d}  :\ \mbfs{s}{}^\top \mbfs{A} \mbfs{s} +  \mbfs{b}{}^\top \mbfs{s} +  f \le0\}.
\]
The reduction from quadratic-free sets to $Q_h$- and $Q_g$-free sets is based on a homogenization and diagonalization argument; we describe this in detail in Section~\ref{sec:Reduction}.
A characterization of maximal $Q_h$-free sets was given in \cite{munoz2025characterization} using so-called non-expansive functions; this prior work characterizes all maximal {\it homogeneous}-quadratic-free sets.
In this article, we characterize maximal $Q_g$-free sets using non-expansive functions as well, consequently completing a characterization of all maximal {\it inhomogeneous}-quadratic-free sets.
Hence, our work completes a characterization of {\it all} maximal quadratic-free sets.

Our characterization is explicit and enables the construction of a wide range of maximal quadratic-free sets beyond what is currently known.
Prior work has implemented intersection cuts using only one family of such sets, with positive results~\cite{chmiela2022implementation,chmiela2025monoidal}.
A complete description of all maximal quadratic-free sets significantly expands known families of valid inequalities for non-convex QCQPs, thus offering a new avenue to strengthen cutting-plane methods in this setting.
%

\noindent{\bf Characterizing full-dimensional maximal $Q_g$-free sets.}

Mu\~noz et al. \cite{munoz2025characterization} focus on full-dimensional maximal $Q_h$-free sets.
This emphasis on full-dimensionality follows from the fact that lower dimensional sets have empty interiors and are vacuously $Q_h$-free.
Hence, all lower dimensional maximal $Q_h$-free sets are hyperplanes. 

Similarly, we search for sets that are not vacuously $Q_g$-free. 
Observe that $Q_g = Q_h \cap H$, where
\[
H := H(\mbfs{a}, \mbfs{d}) := \left\{ (\mbfs{x},\mbfs{y}) \in \mbb{R}^n \times \mbb{R}^m : \mbfs{a}{}^\top\mbfs{x} + \mbfs{d}{}^\top\mbfs{y} = -1\right\}.
\]
We say a set $K \subseteq H$ is {\bf full-dimensional} if it is $(n+m-1)$-dimensional.
A full-dimensional set $K \subseteq H$ is said to be {\bf $\bm{Q_g}$-free} if its relative interior is disjoint from $Q_g$.
Using these definitions, we search for full-dimensional maximal $Q_g$-free sets.
These definitions have the benefit of allowing us to analyze maximal $Q_g$-free sets in the same ambient space as $Q_h$.

Mu\~noz et al.\ begin their characterization by showing that every full-dimensional maximal $Q_h$-free set has the form $C_{\Gamma}$ for some $\Gamma:D^m \to D^n$, where $D^k := \{\mbfs{x} \in \mbb{R}^k: \|\mbfs{x}\| = 1\}$, and
\[
C_{\Gamma} := \left\{(\mbfs{x}, \mbfs{y}) \in \mbb{R}^n\times \mbb{R}^m:\ -\Gamma(\mbfs{\beta}){}^\top\mbfs{x} + \mbfs{\beta}{}^\top \mbfs{y} \le 0 ~~\forall~ \mbfs{\beta} \in D^m\right\}.
\]
Figure~\ref{fig0} shows an example of $Q_h$ and $C_{\Gamma}$.

\begin{theorem}[Mu\~{n}oz  et al.~{\cite[Theorem 1.1]{munoz2025characterization}}]\label{thmHCGamma}
Each full-dimensional maximal $Q_h$-free set has the form $C_{\Gamma}$ for some $\Gamma:D^m \to D^n$.
\end{theorem}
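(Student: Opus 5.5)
Let $C$ be a full-dimensional maximal $Q_h$-free set. The plan is to show three things in turn: $C$ is a closed convex cone, its relevant normals lie in a set indexed by $\bm\beta\in D^m$, and each such normal can be rescaled to the form $(-\Gamma(\bm\beta),\bm\beta)$.

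\textbf{Step 1: $C$ is a cone.} Since $Q_h$ is a cone, $\lambda C$ is $Q_h$-free for every $\lambda>0$. The main point to check is that $\mathbf 0\in C$. Note that $\mathbf 0\in Q_h$. If $\mathbf 0\notin C$, separate $\mathbf 0$ from $C$ by a hyperplane $\bm\alpha^\top\bm s\le \delta<0$ containing $C$. Consider $\cl\conv(C\cup\{\mathbf 0\})$, or more simply the closure of $\bigcup_{\lambda\ge 1}\lambda C$. I expect its interior to miss $Q_h$ by cone invariance, though this needs care: a point of the interior of a union of scalings need not lie in the interior of a single scaling. A cleaner route is to use the recession cone. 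For $\bm s\in \inte C$ and $\lambda\ge1$, the set $\lambda C$ is $Q_h$-free. I would instead argue via maximality that $C$ equals $\{\bm s: \bm\alpha_i^\top \bm s\le \delta_i\}$ for its supporting halfspaces, and show that each $\delta_i$ can be replaced by $0$. If some $\delta<0$ occurred, $C$ would lie in an open halfspace not containing $\mathbf 0$, and it could be enlarged toward the origin. I expect Step 1 to be the main obstacle. The first thing I would try is the following: show that the closed convex cone $K=\cl\cone(C)$ satisfies $\inte K\cap Q_h=\emptyset$ whenever $C$ is $Q_h$-free and full-dimensional. A point in $\inte K$ is a positive combination of points of $C$ with a small ball around it, and by convexity and homogeneity it equals $\lambda$ times a point of $\inte C$ (or of $\relint$ of a suitable shift). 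Maximality then gives $C=K$.

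\textbf{Step 2: describe the normals.} As a closed convex cone, $C=\{\bm s: \bm\alpha^\top \bm s\le 0\ \forall \bm\alpha\in C^\circ\}$. Write $\bm\alpha=(-\bm\gamma,\bm\beta)$. The cone $C$ must contain points of $\{\|\bm x\|\ge \|\bm y\|\}$ in its interior, since otherwise it would sit inside $Q_h$ and have empty interior disjoint from $Q_h$, which is impossible for a full-dimensional set. Every valid normal must be nonnegative on $Q_h^c$-directions in the right sense: since $C$ avoids $\inte$-points of $Q_h$, I would show via polar duality that $C^\circ\supseteq$ something forcing $\|\bm\gamma\|\le\|\bm\beta\|$. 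Indeed, $C\supseteq$ the cone $\{\|\bm y\|\le \|\bm x\|\}$ restricted appropriately would follow from maximality. The key inequality I expect is that every $\bm\alpha=(-\bm\gamma,\bm\beta)\in C^\circ$ satisfies $\|\bm\gamma\|\ge\|\bm\beta\|$. This holds because otherwise the half-space it defines would contain, in its interior, the point $(\mathbf 0,\bm\beta)\in Q_h$ scaled appropriately. That point must still be checked against the other constraints, so I would use the extreme rays of $C^\circ$ together with the fact that $C$ meets $Q_h$ only on its boundary.

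\textbf{Step 3: define $\Gamma$.} For each $\bm\beta\in D^m$, the point $(\mathbf 0,\bm\beta)\in Q_h$ lies outside $\inte C$, so some normal $(-\bm\gamma,\bm\beta')$ with $\bm\beta'^\top\bm\beta\ge 0$ separates it. I would show, using the points $(\bm x,\bm\beta)$ with $\|\bm x\|=1$ lying on the boundary of $Q_h$ together with maximality, that a normal can be chosen with $\bm\beta'=\bm\beta$ and $\|\bm\gamma\|=1$ after rescaling; this defines $\Gamma(\bm\beta)=\bm\gamma$. It then remains to prove $C=C_\Gamma$. The inclusion $C\subseteq C_\Gamma$ holds because each defining inequality of $C_\Gamma$ is valid for $C$. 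Next, $C_\Gamma$ is $Q_h$-free: if $(\bm x,\bm y)\in Q_h$ with $\bm y\ne \mathbf 0$, take $\bm\beta=\bm y/\|\bm y\|$; then $-\Gamma(\bm\beta)^\top\bm x+\|\bm y\|\ge -\|\bm x\|+\|\bm y\|\ge 0$, so $(\bm x,\bm y)$ is not an interior point. Maximality of $C$ then forces $C=C_\Gamma$.
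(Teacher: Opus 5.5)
\textbf{Verdict: genuine gap.} The step you leave as ``I would show'' is the entire content of the theorem, and Step 2 does not supply it.

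Your closing argument in Step 3 is the right skeleton. Suppose that for every $\mbfs{\beta}\in D^m$ you have some $\mbfs{\gamma}\in D^n$ with $-\mbfs{\gamma}^\top\mbfs{x}+\mbfs{\beta}^\top\mbfs{y}\le 0$ valid for $C$. Then $C\subseteq C_\Gamma$, and your Cauchy--Schwarz check shows $C_\Gamma$ is $Q_h$-free (the case $\mbfs{y}=\mbfs{0}$ is just the origin, which is not interior to $C_\Gamma$). Maximality then gives $C=C_\Gamma$. The paper itself gives no proof here; it imports the result from Mu\~noz et al. You only assert this existence statement: ``I would show \dots\ that a normal can be chosen with $\mbfs{\beta}'=\mbfs{\beta}$ and $\|\mbfs{\gamma}\|=1$.'' Separating the point $(\mbfs{0},\mbfs{\beta})$ from $\inte C$, or even the cone $\{(\mbfs{x},\lambda\mbfs{\beta}):\|\mbfs{x}\|\le\lambda\}$, only yields a valid normal $(-\mbfs{\gamma},\mbfs{b})$ with $\mbfs{b}^\top\mbfs{\beta}\ge\|\mbfs{\gamma}\|$. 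Nothing forces $\mbfs{b}$ to be parallel to $\mbfs{\beta}$.

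Step 2 does not fill this in. Its ``key inequality'' $\|\mbfs{\gamma}\|\ge\|\mbfs{\beta}\|$ for all $(-\mbfs{\gamma},\mbfs{\beta})\in C^\circ$ fails for non-maximal $Q_h$-free cones. For $n=m=1$, the cone $\{x\ge 2|y|\}$ has the valid normal $(-\tfrac12,1)$. So the inequality cannot follow from $Q_h$-freeness alone. The reason you offer is also incorrect: the halfspace $-\mbfs{\gamma}^\top\mbfs{x}+\mbfs{\beta}^\top\mbfs{y}\le0$ never contains $(\mbfs{0},\mbfs{\beta})$, and it always contains $(\mbfs{0},-\mbfs{\beta})$ in its interior, whatever $\|\mbfs{\gamma}\|$ is. In any case, a bound over all polar vectors is not what the construction needs. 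You need one valid normal whose $\mbfs{y}$-part is exactly $\mbfs{\beta}$.

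The idea that closes the gap is to separate $\inte C$ from the larger convex cone $B_{\mbfs{\beta}}:=\{(\mbfs{x},\mbfs{y}):\|\mbfs{x}\|\le\mbfs{\beta}^\top\mbfs{y}\}$. This cone lies in $Q_h$ because $\mbfs{\beta}^\top\mbfs{y}\le\|\mbfs{y}\|$. Since $B_{\mbfs{\beta}}$ is a cone, separation gives a nonzero functional $-\mbfs{\gamma}^\top\mbfs{x}+\mbfs{b}^\top\mbfs{y}$ that is $\le0$ on $C$ and $\ge0$ on $B_{\mbfs{\beta}}$. Testing on the points $(\mbfs{0},\mbfs{y})$ with $\mbfs{\beta}^\top\mbfs{y}\ge0$ forces $\mbfs{b}=\mu\mbfs{\beta}$ with $\mu\ge0$. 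Testing on $(\mbfs{x},\|\mbfs{x}\|\mbfs{\beta})$ forces $\mu\ge\|\mbfs{\gamma}\|$. Hence $\mu>0$, and after scaling, $\|\mbfs{\gamma}\|\le1$.

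To reach norm exactly one, project $\inte C$ onto the $\mbfs{x}$-coordinates. This is an open convex set missing $\mbfs{0}$, because any $(\mbfs{0},\mbfs{y})$ lies in $Q_h$. So some $\mbfs{f}\ne\mbfs{0}$ satisfies $\mbfs{f}^\top\mbfs{x}\ge0$ on $C$. Then $-(\mbfs{\gamma}+t\mbfs{f})^\top\mbfs{x}+\mbfs{\beta}^\top\mbfs{y}\le0$ stays valid for every $t\ge0$, and by continuity some $t$ gives $\|\mbfs{\gamma}+t\mbfs{f}\|=1$.

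With this in hand, Step 1, which you flag as the main obstacle, is not needed at all. Your $\cl\cone(C)$ sketch for it is correct in substance; it is the same fact used in the proof of Lemma~\ref{sliceofCgamma}.
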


\begin{figure}[t]
\centering
\begin{tikzpicture}[scale = .4,
declare function={
PX(\x,\y,\z) = -\x + .45*\y ;
PY(\x,\y,\z) = 0*\x+.25*\y + \z;
} 
]

\draw[draw = blue!50]({PX(-1.5,1.5,-1.5)}, {PY(-1.5,1.5,-1.5)}) to ({PX(1.5,-1.5,1.5)}, {PY(1.5,-1.5,1.5)});
\draw[draw = blue!50, fill = blue!50, opacity = .25]({PX(-1.5,1.5,-1.5)}, {PY(-1.5,1.5,-1.5)}) 
to ({PX(-1.5+1*1,1.5+6*1,-1.5+1*1)}, {PY(-1.5+1*1,1.5+6*1,-1.5+1*1)})
to ({PX(-1.5+1*1+3,1.5+6*1-3,-1.5+1*1+3)}, {PY(-1.5+1*1+3,1.5+6*1-3,-1.5+1*1+3)})
to ({PX(-1.5+3,1.5-3,-1.5+3)}, {PY(-1.5+3,1.5-3,-1.5+3)})
to cycle;

\draw[draw = blue!75, fill = blue!75, opacity = .25]({PX(-1.5,1.5,-1.5)}, {PY(-1.5,1.5,-1.5)}) 
to ({PX(-1.5-0*1,1.5+1.5*1,-1.5-1.5*1)}, {PY(-1.5-0*1,1.5+1.5*1,-1.5-1.5*1)})
to ({PX(-1.5-0*1+3,1.5+1.5*1-3,-1.5-1.5*1+3)}, {PY(-1.5-0*1+3,1.5+1.5*1-3,-1.5-1.5*1+3)})
to ({PX(-1.5+3,1.5-3,-1.5+3)}, {PY(-1.5+3,1.5-3,-1.5+3)})
to cycle;

\draw[draw = blue!75, fill = blue!75, opacity = .25]({PX(-1.5,1.5,-1.5)}, {PY(-1.5,1.5,-1.5)}) 
to ({PX(-1.5-0*1,1.5+1.5*1,-1.5-1.5*1)}, {PY(-1.5-0*1,1.5+1.5*1,-1.5-1.5*1)})
to ({PX(-1.5+1*1,1.5+6*1,-1.5+1*1)}, {PY(-1.5+1*1,1.5+6*1,-1.5+1*1)})
to cycle;



\draw[opacity = .15]({PX(0,0,0)}, {PY(0,0,0)}) to({PX(0,0,1.4)}, {PY(0,0,1.4)});
\draw[->]({PX(0,0,0)}, {PY(0,0,0)}) to node[left, pos = 1]{\color{black}$x_1$} ({PX(3,0,0)}, {PY(3,0,0)});
\draw[->]({PX(0,0,0)}, {PY(0,0,0)}) to node[left, pos = 1]{\color{black}$-x_2$} ({PX(0,-5,0)}, {PY(0,-5,0)});

\begin{scope}%
\color{red!70}
\pgfsetstrokeopacity{.65}
\foreach \x in {2, 1.975, ..., 0.05}{
\pgfpathellipse{\pgfpointxy{0}{-1*\x}}{\pgfpointxy{-1*\x}{.15*\x}}{\pgfpointxy{-.75*\x}{-.25*\x}} 
}

\foreach \x in {0.05, 0.1, ..., 2}{
\pgfpathellipse{\pgfpointxy{0}{1*\x}}{\pgfpointxy{-1*\x}{.15*\x}}{\pgfpointxy{-.75*\x}{-.25*\x}} 
}
\pgfusepath{stroke}
\end{scope}%

\begin{scope}
\color{red!10}
\pgfsetfillopacity{.75}
\pgfpathellipse{\pgfpointxy{0}{1*1.95}}{\pgfpointxy{-1*1.95}{.15*1.95}}{\pgfpointxy{-.75*1.95}{-.25*1.95}} 
\pgfusepath{fill}
\end{scope}

\draw[->]({PX(0,0,1.4)}, {PY(0,0,1.4)}) to node[left, pos = 1]{\color{black}$y$} ({PX(0,0,3)}, {PY(0,0,3)});

\node[red] at ({PX(3,0,2.5)}, {PY(3,0,2.5)}){$Q_h$};
\node[blue] at ({PX(-4,0,2)}, {PY(-4,0,2)}){$C_{\Gamma}$};
\end{tikzpicture}
\caption{The sets $Q_h$ for $(n,m) = (2,1)$ and $C_{\Gamma}$ for $\Gamma(1) := (1,0)$ and $\Gamma(-1) := (0,1)$.
}\label{fig0}
\end{figure}

We give an analogue of Theorem~\ref{thmHCGamma} for maximal $Q_g$-free sets by showing that each maximal $Q_g$-free set can obtained starting from some set $C_{\Gamma} \cap H$ by possibly removing some inequalities.
More precisely, for a function $\Gamma: D^m \to D^n$ and a set $I\subseteq D^m$, define
\begin{equation}\label{eq:def_CIGamma}
C^I_\Gamma:= \left\{(\mbfs{x}, \mbfs{y}) \in \mathbb{R}^n \times \mathbb{R}^m:\ -\Gamma(\mbfs{\beta}){}^\top\mbfs{x} + \mbfs{\beta}{}^\top \mbfs{y} \le 0 ~~\forall~ \mbfs{\beta} \in I \right\}.
\end{equation}
Note that $C_{\Gamma} = C^{D^m}_{\Gamma}$.
The important inequalities for our maximality characterization are indexed by
\begin{equation}\label{defnG}
G := G(\Gamma, \mbfs{a}, \mbfs{d}) := \{\mbfs{\beta} \in D^m :\ \mbfs{a}{}^\top\Gamma(\mbfs{\beta}) + \mbfs{d}{}^\top\mbfs{\beta} \le 0 \}.
\end{equation}
Our first result shows we can restrict attention to sets of the form $C^G_{\Gamma}\cap H$.

\begin{theorem}\label{thm:Both}
Each full-dimensional maximal $Q_g$-free set equals $C^G_{\Gamma} \cap H$ for some $\Gamma:D^m \to D^n$ such that $C_{\Gamma}$ is a full-dimensional maximal $Q_h$-free set.
\end{theorem}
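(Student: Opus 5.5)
The plan is to lift a maximal $Q_g$-free set $K \subseteq H$ to a $Q_h$-free set in the full space, apply Theorem~\ref{thmHCGamma}, and then intersect back with $H$.

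\textbf{Step 1: lift $K$ to a cone.} Let $K \subseteq H$ be full-dimensional and maximal $Q_g$-free. Since $0 \notin H$, we can form the closed cone $\hat K := \cl \cone(K)$. Every point of $\relint K$ is $t\,(\mbfs{x},\mbfs{y})$ with $t>0$ and $(\mbfs{x},\mbfs{y}) \in \relint K$. Moreover, $Q_h$ is a cone and $Q_h \cap H = Q_g$. Hence $\inte \hat K \cap Q_h = \emptyset$, so $\hat K$ is a full-dimensional $Q_h$-free convex cone. We should check that the interior of $\hat K$ is exactly the positive multiples of $\relint K$, together with points of the open cone on the side $\mbfs{a}^\top\mbfs{x}+\mbfs{d}^\top\mbfs{y}<0$; recession directions of $K$ must be handled with some care.

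\textbf{Step 2: enlarge to a maximal cone.} By Zorn's lemma, or by a standard argument for closed $Q_h$-free sets, $\hat K$ is contained in some maximal $Q_h$-free set. One technical point is that the maximal set can be chosen to be a cone containing $\hat K$. The reason is that if $C \supseteq \hat K$ is $Q_h$-free and convex, then $\cl\cone(C)$ is also $Q_h$-free, because $Q_h$ is a cone and $0 \in \hat K \subseteq C$. By Theorem~\ref{thmHCGamma}, this maximal set is $C_\Gamma$ for some $\Gamma:D^m\to D^n$, and so $K \subseteq C_\Gamma \cap H \subseteq C_\Gamma^G \cap H$.

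\textbf{Step 3: show $C_\Gamma^G\cap H$ is $Q_g$-free.} By maximality of $K$, this gives $K = C^G_\Gamma \cap H$. This is the main obstacle, since dropping the inequalities indexed by $D^m\setminus G$ could a priori let points of $Q_g$ enter the relative interior.

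For $\mbfs{\beta}\notin G$ we have $\mbfs{a}^\top\Gamma(\mbfs{\beta})+\mbfs{d}^\top\mbfs{\beta}>0$. Suppose $(\mbfs{x},\mbfs{y}) \in Q_g$ lies in $\relint(C^G_\Gamma\cap H)$. Take the unit vector $\mbfs{\beta}_0 := \mbfs{y}/\|\mbfs{y}\|$; here $\mbfs{y}\ne 0$, since otherwise $\mbfs{x}=0$, which contradicts membership in $H$. Then
\[
-\Gamma(\mbfs{\beta}_0)^\top\mbfs{x}+\mbfs{\beta}_0^\top\mbfs{y} \ge \|\mbfs{y}\|-\|\mbfs{x}\| \ge 0 ,
\]
so the $\mbfs{\beta}_0$ inequality is violated or tight. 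If $\mbfs{\beta}_0\in G$, then this contradicts relative interiority, after perturbing within $Q_g$ to get strictness.

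So the delicate case is $\mbfs{\beta}_0 \notin G$. Here I would use the structure of $G$ together with the line segment from $(\mbfs{x},\mbfs{y})$ toward points of $K$. Alternatively, I would argue directly that the inequalities with $\mbfs{\beta}\notin G$ are implied on $H$ by those in $G$ near $Q_g$, or are redundant for $Q_g$-freeness. Concretely, on $H$ the inequality for $\mbfs{\beta}\notin G$ has the property that its normal $(-\Gamma(\mbfs{\beta}),\mbfs{\beta})$ has positive inner product with $-(\mbfs{a},\mbfs{d})$. This suggests that the face it cuts lies on the ``far'' side, where $Q_g$ points can be separated by other inequalities.

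If this direct route fails, the fallback is to change the order of the argument. First show that $K$ itself has the form $C^{I}_{\Gamma}\cap H$ for some $I$, and that every inequality essential for $Q_g$-freeness has an index in $G$. Then conclude $I \supseteq G$-essential indices, hence $K \supseteq C^G_\Gamma\cap H \supseteq K$.
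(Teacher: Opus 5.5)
\textbf{There is a genuine gap: Step 3 is not proved, and the route you propose for it cannot work in the case $\|\mbfs{d}\|<\|\mbfs{a}\|=1$.}

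Your Steps 1--2 are correct and are essentially the paper's Lemma~\ref{sliceofCgamma}. They even give $K=C_\Gamma\cap H$ outright, since $C_\Gamma\cap H$ is $Q_g$-free. But Step 3 carries all the remaining content of the theorem, and you leave open exactly the case that matters, $\mbfs{\beta}_0\notin G$. Your case $\mbfs{\beta}_0\in G$ is fine once you note that equality forces $(\mbfs{x},\mbfs{y})=\|\mbfs{y}\|\,(\Gamma(\mbfs{\beta}_0),\mbfs{\beta}_0)$. Then membership in $H$ gives $\mbfs{\beta}_0\in G^\circ$, which is open, so the perturbation does not leave $G$.

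The deeper problem is that nothing in your Step 3 uses the maximality of $K$, and when $\|\mbfs{d}\|<\|\mbfs{a}\|=1$ the conclusion genuinely depends on it. Take $n=2$, $m=1$, $\mbfs{a}=(0,1)$, $\mbfs{d}=0$, $\Gamma(1)=(1,0)$, $\Gamma(-1)=(0,1)$, as in Figure~\ref{fig2}(b). Here $C_\Gamma$ is maximal $Q_h$-free, $C_\Gamma\cap H$ is full-dimensional, and $G=\{1\}$. Yet the point $(x_1,x_2,y)=(0,-1,-2)\in Q_g$ lies in $\relint(C^G_\Gamma\cap H)=\{(x_1,x_2,y): x_2=-1,\ y<x_1\}$, with $\mbfs{\beta}_0=-1\notin G$. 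This is exactly your delicate case, so no argument that uses only the structure of $\Gamma$ can close it.

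In this regime the missing idea is that maximality of $K=C_\Gamma\cap H$ forces $G=D^m$, after which $C^G_\Gamma=C_\Gamma$ and nothing remains to prove. The paper (Lemma~\ref{lemma:hardcase_necessary}) proves this by contradiction. It pushes every inequality out by $\Lambda(\mbfs{\beta})=\max\{0,\,2(\mbfs{a}^\top\Gamma(\mbfs{\beta})+\mbfs{d}^\top\mbfs{\beta})/(1-(\mbfs{d}^\top\mbfs{\beta})^2)\}$. Substituting $\mbfs{a}^\top\mbfs{x}=-1-\|\mbfs{y}\|\,\mbfs{d}^\top\mbfs{\beta}_0$ and applying Cauchy--Schwarz to $\Gamma(\mbfs{\beta}_0)-\Lambda(\mbfs{\beta}_0)\mbfs{a}$, whose norm is exactly $1+\Lambda(\mbfs{\beta}_0)\,\mbfs{d}^\top\mbfs{\beta}_0$, shows the pushed set is still $Q_g$-free on $H$. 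Moving from a point of $K$ in direction $(-\mbfs{a},\mbfs{d})$ and rescaling back to $H$ shows the pushed set is strictly larger than $K$.

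When $\|\mbfs{a}\|\le\|\mbfs{d}\|=1$ a direct route is viable, but your heuristics do not supply it. Two of them are off:
\begin{itemize}
\item The inequalities with $\mbfs{\beta}\notin G$ are not redundant on $H$: dropping them strictly enlarges the set, as in Figure~\ref{fig1}.
\item The inner product you compute is $(-\Gamma(\mbfs{\beta}),\mbfs{\beta})^\top(-(\mbfs{a},\mbfs{d}))=\mbfs{a}^\top\Gamma(\mbfs{\beta})-\mbfs{d}^\top\mbfs{\beta}$, which is not the quantity defining $G$.
\end{itemize}
What you need is a \emph{substitute} inequality for each $\mbfs{\beta}_0\notin G^\circ$. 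It should have the same $\mbfs{y}$-coefficient $\mbfs{\beta}_0$, an $\mbfs{x}$-coefficient of norm at most one, and be implied on $H$ by the inequalities indexed by $G^\circ$. Your Cauchy--Schwarz step then works for every $\mbfs{\beta}_0$.

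The paper (Lemma~\ref{lemma:patching}) builds this substitute as follows.
\begin{itemize}
\item First, $\mbfs{a}\neq\Gamma(-\mbfs{d})$; otherwise $-\mbfs{d}\in G$ contributes $\mbfs{a}^\top\mbfs{x}+\mbfs{d}^\top\mbfs{y}\ge 0$ and $K=\emptyset$. Hence $-\mbfs{d}\in G^\circ$.
\item It slides $\mbfs{\beta}_\mu=(\mbfs{\beta}_0-\mu\mbfs{d})/\|\mbfs{\beta}_0-\mu\mbfs{d}\|$ toward $-\mbfs{d}$, up to the supremum $\mu_0$ of those $\mu$ with $\mbfs{\beta}_\mu\notin G^\circ$. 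There $\mbfs{a}^\top\Gamma(\mbfs{\beta}_{\mu_0})+\mbfs{d}^\top\mbfs{\beta}_{\mu_0}=0$.
\item It sets $\overline{\Gamma}(\mbfs{\beta}_0):=\|\mbfs{\beta}_0-\mu_0\mbfs{d}\|\,\Gamma(\mbfs{\beta}_{\mu_0})-\mu_0\mbfs{a}$, with $\overline{\Gamma}(\mbfs{d}):=-\mbfs{a}$ and $\overline{\Gamma}=\Gamma$ on $G^\circ$.
\end{itemize}
Each substitute inequality is a conic combination of a limit of $G^\circ$-inequalities and $\mbfs{a}^\top\mbfs{x}+\mbfs{d}^\top\mbfs{y}\le 0$. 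Its norm bound $\|\overline{\Gamma}(\mbfs{\beta}_0)\|\le1$ comes from the boundary equation together with $\|\mbfs{a}\|\le\|\mbfs{d}\|$. Hence $C^G_\Gamma\cap H\subseteq C^{G^\circ}_\Gamma\cap H=C_{\overline{\Gamma}}\cap H$, and the latter is $Q_g$-free.

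Your fallback does not help: the claim that ``every inequality essential for $Q_g$-freeness is indexed by $G$'' is just a restatement of Step~3.
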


Mu\~{n}oz  et al.\ leverage Theorem~\ref{thmHCGamma} to characterize full-dimensional maximal $Q_h$-free sets using properties of $\Gamma$.
A function $\Gamma:D^m \to D^n$ is {\bf non-expansive} if $\|\Gamma(\mbfs{\beta}) - \Gamma(\overline{\mbfs{\beta}})\| \le \|\mbfs{\beta} - \overline{\mbfs{\beta}}\|$ for all $\mbfs{\beta}, \overline{\mbfs{\beta}} \in D^m$.

\begin{theorem}[Mu\~{n}oz  et al.~{\cite[Theorems 1.1 and 1.2]{munoz2025characterization}}]\label{thmHQuad}
Let $\Gamma:D^m \to D^n$.
The set $C_{\Gamma}$ is a full-dimensional maximal $Q_h$-free set if and only if $\Gamma$ is non-expansive and $\mbfs{0}\not\in \conv(\{\Gamma(\mbfs{\beta}) \,:\ \mbfs{\beta}\in D^m\})$.
\end{theorem}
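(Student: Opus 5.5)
The two directions can be proved separately; the central object is the family of points $\mbfs{p}_{\mbfs{\beta}} := (\Gamma(\mbfs{\beta}), \mbfs{\beta})$. Each of these lies on the boundary of $Q_h$, since $\|\Gamma(\mbfs{\beta})\| = \|\mbfs{\beta}\| = 1$, and satisfies the $\mbfs{\beta}$-th inequality of $C_\Gamma$ with equality. The algebraic observation driving everything is that, for $\mbfs{\beta}, \overline{\mbfs{\beta}} \in D^m$,
\[
-\Gamma(\overline{\mbfs{\beta}})^\top \Gamma(\mbfs{\beta}) + \overline{\mbfs{\beta}}^\top \mbfs{\beta} \le 0
\iff \|\Gamma(\mbfs{\beta})-\Gamma(\overline{\mbfs{\beta}})\|^2 \le \|\mbfs{\beta}-\overline{\mbfs{\beta}}\|^2 ,
\]
because all four vectors have unit norm. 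Hence $\Gamma$ is non-expansive exactly when every $\mbfs{p}_{\mbfs{\beta}}$ (and, since $C_\Gamma$ is a cone, every ray $t\mbfs{p}_{\mbfs{\beta}}$ with $t\ge 0$) lies in $C_\Gamma$.

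For the ``if'' direction, I first show $C_\Gamma$ is $Q_h$-free. Take $(\mbfs{x},\mbfs{y}) \in Q_h$ with $\mbfs{y}\neq \mbfs{0}$ and set $\mbfs{\beta} = \mbfs{y}/\|\mbfs{y}\|$. Then $-\Gamma(\mbfs{\beta})^\top\mbfs{x} + \mbfs{\beta}^\top\mbfs{y} \ge \|\mbfs{y}\| - \|\mbfs{x}\| \ge 0$, so this inequality is not strict at $(\mbfs{x},\mbfs{y})$. If $\mbfs{y}=\mbfs{0}$, then $\mbfs{x}=\mbfs{0}$, and the apex of the cone is never interior unless $C_\Gamma = \mbb{R}^{n+m}$. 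A point in the interior satisfies all inequalities strictly (perturb it in the direction $(\mbfs{0},\mbfs{\beta})$), so the interior misses $Q_h$.

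For full-dimensionality, non-expansiveness gives continuity, so $\Gamma(D^m)$ is compact. Separating $\mbfs{0}$ from its convex hull yields $\mbfs{v}$ and $\varepsilon>0$ with $\Gamma(\mbfs{\beta})^\top\mbfs{v}\ge\varepsilon$ for all $\mbfs{\beta}$. Then $(t\mbfs{v},\mbfs{0})$ satisfies every inequality with slack $t\varepsilon$, and a small ball around it stays inside $C_\Gamma$.

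For maximality, suppose $K \supsetneq C_\Gamma$ is convex with $\mbfs{z}\in K$ violating the $\mbfs{\beta}$-th inequality. I want to show that some point $t\mbfs{p}_{\mbfs{\beta}}$, which lies in $Q_h$, lands in $\inte K$. The first attempt is to take $\conv(\{\mbfs{z}\}\cup B)$, where $B$ is a small ball around the interior point $(t\mbfs{v},\mbfs{0})$. Combined with the cone structure, I would try to show that the ray through $\mbfs{p}_{\mbfs{\beta}}$ is swallowed into the interior.

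\textbf{The main obstacle.} The hyperplane of the $\mbfs{\beta}$-th inequality need not be a facet near $\mbfs{p}_{\mbfs{\beta}}$: other inequalities $\overline{\mbfs{\beta}}$ may also be tight there, namely those where non-expansiveness holds with equality. If so, moving toward $\mbfs{z}$ could exit $C_\Gamma$ only through the $\mbfs{\beta}$-face while staying on the boundary of $K$. To handle this, I would decompose $\mbfs{z}-\mbfs{p}_{\mbfs{\beta}}$ into a component normal to the $\mbfs{\beta}$-hyperplane and a tangential part. I would then use the interior point and convexity to show that some point of the segment between $\mbfs{z}$ and $(t\mbfs{v},\mbfs{0})$ belongs to $Q_h$ and lies strictly inside $K$. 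The natural candidate is the point where the segment crosses the boundary of $Q_h$, located via the same Cauchy--Schwarz estimate with $\mbfs{\beta}$ replaced by $\mbfs{y}/\|\mbfs{y}\|$ at that crossing.

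For the ``only if'' direction, assume $C_\Gamma$ is maximal and full-dimensional. Maximality forces every inequality of $C_\Gamma$ to be supporting at some point of $Q_h\cap C_\Gamma$ other than the origin. Otherwise we could translate that inequality slightly outward, by a small $\delta$ obtained through a compactness argument on the unit-norm slice, while staying $Q_h$-free, contradicting maximality. A tight point $(\mbfs{x},\mbfs{y})\in Q_h$ satisfies
\[
0 = -\Gamma(\mbfs{\beta})^\top\mbfs{x} + \mbfs{\beta}^\top\mbfs{y} \ge \|\mbfs{y}\|\bigl(\mbfs{\beta}^\top \tfrac{\mbfs{y}}{\|\mbfs{y}\|}\bigr) - \|\mbfs{x}\| .
\]
Together with the inequality indexed by $\mbfs{y}/\|\mbfs{y}\|$, this forces equality throughout, so $\mbfs{y}\parallel\mbfs{\beta}$ and $\mbfs{x}=\|\mbfs{y}\|\Gamma(\mbfs{\beta})$; that is, the tight point is a multiple of $\mbfs{p}_{\mbfs{\beta}}$. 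Membership of $\mbfs{p}_{\mbfs{\beta}}$ in $C_\Gamma$ for all $\mbfs{\beta}$ gives non-expansiveness by the displayed equivalence.

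Finally, suppose $\mbfs{0}=\sum_i\lambda_i\Gamma(\mbfs{\beta}_i)$ as a convex combination. Summing the corresponding inequalities shows that $C_\Gamma\subseteq\{(\mbfs{x},\mbfs{y}) : (\sum_i\lambda_i\mbfs{\beta}_i)^\top\mbfs{y}\le 0\}$. Non-expansiveness then gives
\[
\Bigl\|\sum_i\lambda_i\mbfs{\beta}_i\Bigr\|^2 \le \sum_{i,j}\lambda_i\lambda_j\,\Gamma(\mbfs{\beta}_i)^\top\Gamma(\mbfs{\beta}_j) = 0 ,
\]
since non-expansiveness is equivalent to $\mbfs{\beta}_i^\top\mbfs{\beta}_j \le \Gamma(\mbfs{\beta}_i)^\top\Gamma(\mbfs{\beta}_j)$. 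So $\sum_i\lambda_i\mbfs{\beta}_i=\mbfs{0}$, and the summed inequalities become an equality on all of $C_\Gamma$. Each inequality used in the sum then holds with equality on $C_\Gamma$, contradicting full-dimensionality.
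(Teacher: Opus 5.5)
The paper does not prove this statement; it imports it from Mu\~noz et al. So I judge your proposal on its own. The easy parts are correct:
\begin{itemize}
\item the equivalence between non-expansiveness and $p_{\mbfs{\beta}}\in C_\Gamma$ for all $\mbfs{\beta}$;
\item $Q_h$-freeness of $C_\Gamma$ and its full-dimensionality;
\item the deduction of $\mbfs{0}\notin\conv(\Gamma(D^m))$ from non-expansiveness plus full-dimensionality.
\end{itemize}
Both substantive steps, however, have genuine gaps.

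\emph{Maximality in the ``if'' direction} is never actually proved, and the mechanism you sketch does not work as stated. A point $\mbfs{z}\in K\setminus C_\Gamma$ need not lie in $Q_h$, and the segment from $\mbfs{z}$ to $(t\mbfs{v},\mbfs{0})$ can stay inside $\{\|\mbfs{x}\|>\|\mbfs{y}\|\}$. For example, take $n=2$, $m=1$, $\Gamma\equiv(1,0)$ (so $C_\Gamma=\{|y|\le x_1\}$), $\mbfs{z}=((0,10),1)$ and $\mbfs{v}=(1,0)$. Every point $((st,10(1-s)),1-s)$ of the segment has $\|\mbfs{x}\|^2-y^2=s^2t^2+99(1-s)^2>0$. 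Choosing the interior point more cleverly only restates what must be shown.

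The obstacle you name is also left unresolved: several inequalities can be tight at $p_{\mbfs{\beta}}$. This genuinely occurs, e.g.\ for $\Gamma(\mbfs{\beta})=|\mbfs{\beta}|$ in Figure~\ref{example:easycase}, where every $\mbfs{\beta}$ with nonnegative first coordinate is tight at $p_{\mbfs{\beta}_1}$. The missing idea is the exposing-sequence criterion of the cited work, which this paper adapts as Theorem~\ref{thm:maximalitycriterion}:
\begin{itemize}
\item Keep only indices $\overline{\mbfs{\beta}}$ for which $(-\Gamma(\overline{\mbfs{\beta}}),\overline{\mbfs{\beta}})$ spans an exposed ray of the closed pointed cone these vectors generate. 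They still describe $C_\Gamma$, as in Lemma~\ref{lemma:exposedrays}.
\item For such $\overline{\mbfs{\beta}}$ with exposing vector $(\overline{\mbfs{x}},\overline{\mbfs{y}})$, the points $(\Gamma(\overline{\mbfs{\beta}})+t^{-3/2}\overline{\mbfs{x}},\,(1+t^{-2})\overline{\mbfs{\beta}}+t^{-3/2}\overline{\mbfs{y}})$ lie in $Q_h$ for large $t$. This is \eqref{eq:exposingsequencedef} without the $\mbfs{a},\mbfs{d}$ terms.
\item Non-expansiveness forces every normalized valid inequality of $C_\Gamma$ separating these points to converge to the $\overline{\mbfs{\beta}}$-th one. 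This contradicts the existence of a $Q_h$-free $K\supsetneq C_\Gamma$.
\end{itemize}
Alternatively, if you allow yourself Theorem~\ref{thmHCGamma}, there is a shortcut. Zorn's Lemma puts $C_\Gamma$ inside a maximal $Q_h$-free set $C_{\Gamma'}$. The inequality indexed by $\mbfs{y}/\|\mbfs{y}\|$ gives $C_{\Gamma'}\cap Q_h\subseteq\{(\|\mbfs{y}\|\Gamma'(\mbfs{y}/\|\mbfs{y}\|),\mbfs{y})\}$. Since every $p_{\mbfs{\beta}}$ lies in $C_\Gamma\cap Q_h$, this forces $\Gamma'=\Gamma$.

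\emph{The ``only if'' direction} breaks in two places.

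First, the relaxation argument does not work. Since $C^{D^m\setminus\{\mbfs{\beta}\}}_\Gamma$ and $Q_h$ are cones, relaxing the $\mbfs{\beta}$-th inequality to $\le\delta$ gives a $Q_h$-free set for one $\delta>0$ if and only if it does for all $\delta>0$. That is, it holds if and only if deleting the inequality outright preserves $Q_h$-freeness. So compactness cannot supply a small admissible $\delta$, and the real question, whether deletion is allowed, is not addressed. Moreover, if the inequality is implied by the others, relaxing it does not enlarge $C_\Gamma$ at all. This happens, for instance, for every single inequality when $m\ge 2$ and $\Gamma$ is continuous.

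Second, even granting a tight point $(\mbfs{x},\mbfs{y})\in Q_h\cap C_\Gamma$ with $\mbfs{y}\neq\mbfs{0}$, the conclusion $\mbfs{y}\parallel\mbfs{\beta}$ does not follow. The inequality indexed by $\mbfs{\beta}':=\mbfs{y}/\|\mbfs{y}\|$ only yields $\mbfs{x}=\|\mbfs{y}\|\Gamma(\mbfs{\beta}')$. Tightness then merely says $\Gamma(\mbfs{\beta})^\top\Gamma(\mbfs{\beta}')=\mbfs{\beta}^\top\mbfs{\beta}'$. Your displayed bound becomes $0\ge\|\mbfs{y}\|(\mbfs{\beta}^\top\mbfs{\beta}'-1)$, which holds with slack whenever $\mbfs{\beta}'\neq\mbfs{\beta}$. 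In the maximal example $\Gamma(\mbfs{\beta})=|\mbfs{\beta}|$, the $\mbfs{\beta}_2$-inequality is tight at $p_{\mbfs{\beta}_1}$.

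What you actually need is that maximality forces $C_\Gamma$ to meet $\{(\mbfs{x},\mbfs{\beta}):\|\mbfs{x}\|\le 1\}$ for every $\mbfs{\beta}$; the $\mbfs{\beta}$-th inequality then pins that point down to $p_{\mbfs{\beta}}$. Proving this requires an enlargement argument that produces a strictly larger $Q_h$-free set whenever $C_\Gamma$ misses such a disc. That is the substantive step your proposal does not supply.
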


The set $C_{\Gamma}$ in Figure~\ref{fig0} is maximal according to Theorem~\ref{thmHQuad}.
This result effectively characterizes all maximal homogeneous-quadratic-free sets, i.e., $S$-free sets when $S$ has the form $\{ \mbfs{s} \in \mbb{R}^{d}  : \mbfs{s}{}^\top \mbfs{A} \mbfs{s} \le0\}$.

Similar to Theorem~\ref{thmHQuad}, we leverage Theorem~\ref{thm:Both} to characterize maximal $Q_g$-free sets using properties of $\Gamma$.
Our characterization has two cases based on which is larger, $\|\mbfs{a}\|$ or $\|\mbfs{d}\|$; this case distinction was also used in \cite{MS2020,MS2022}.
The first case assumes $\|\mbfs{a}\| \le \|\mbfs{d}\| = 1$.
See Figure~\ref{fig1} for an example.
In this case, $C^G_{\Gamma} \cap H$ is a maximal $Q_g$-free set for almost any choice of $\Gamma$ such that $C_\Gamma$ is maximal $Q_h$-free.
The only exception is if $\mbfs{a} = \Gamma(-\mbfs{d})$.
In this case, $-\mbfs{a}{}^\top\mbfs{x}-\mbfs{d}{}^\top \mbfs{x} \le 0$ is valid for $C^G_{\Gamma}$ yet invalid for $H$, so $C^G_{\Gamma} \cap H = \emptyset$.

\begin{figure}[t]
\centering
\begin{tabular}{c@{\hskip 1.5 cm}c}
\begin{tikzpicture}[scale = .4,
declare function={
PX(\x,\y,\z) = \x - .75*\y ;
PY(\x,\y,\z) = -.15*\x-.25*\y + \z;
} 
]

\draw[opacity = .15]({PX(0,0,0)}, {PY(0,0,0)}) to ({PX(0,0,0.9)}, {PY(0,0,0.9)});
\draw[opacity = .15]({PX(0,0,0)}, {PY(0,0,0)}) to ({PX(3,0,0)}, {PY(3,0,0)});
\draw[opacity = .15]({PX(0,0,0)}, {PY(0,0,0)}) to ({PX(0,3,0)}, {PY(0,3,0)});

\draw[draw = none] ({PX(-2,2.15,-2)}, {PY(-2,2.15,-2)}) to ({PX(-2,2.15,2)}, {PY(-2,2.15,2)}) to ({PX(-2,-2.15,2)}, {PY(-2,-2.15,2)}) to ({PX(2.15,-2.15,2)}, {PY(2.15,-2.15,2)}) to ({PX(2.15,-2.15,-2)}, {PY(2.15,-2.15,-2)}) to ({PX(2.15,2.15,-2)}, {PY(2.15,2.15,-2)}) to cycle;

\begin{scope}%
\clip({PX(-2,2,0)}, {PY(-2,2,0)}) to ({PX(3,2,0)}, {PY(3,2,0)}) to ({PX(3,2,0)}, {PY(3,2,0)}) to ({PX(2.15,-2.15,-2)}, {PY(2.15,-2.15,-2)}) to ({PX(2.15,2.15,-2)}, {PY(2.15,2.15,-2)}) to ({PX(-2,2.15,-2)}, {PY(-2,2.15,-2)}) to cycle;
\color{red!70}
\pgfsetstrokeopacity{.65}
\foreach \x in {2, 1.975, ..., 0.05}{
\pgfpathellipse{\pgfpointxy{0}{-1*\x}}{\pgfpointxy{-1*\x}{.15*\x}}{\pgfpointxy{-.75*\x}{-.25*\x}} 
}
\pgfusepath{stroke}
\end{scope}%

\begin{scope}%
\clip
	({PX({-sqrt(8*.75 - 3*.75*.75 - 4)},2-2*.75,.75)}, {PY({-sqrt(8*.75 - 3*.75*.75 - 4)},2-2*.75,.75)}) 
	to ({PX(-2,2,0)}, {PY(-2,2,0)})
	to ({PX(3,2,0)}, {PY(3,2,0)})
	to ({PX(2,-2,2)}, {PY(2,-2,2)}) 
	to ({PX(0,-2,2)}, {PY(0,-2,2)}) 
	to cycle;

\color{red!70}
\pgfsetstrokeopacity{0.15}
\foreach \x in {0.05, 0.1, ..., 2}{
\pgfpathellipse{\pgfpointxy{0}{1*\x}}{\pgfpointxy{-1*\x}{.15*\x}}{\pgfpointxy{-.75*\x}{-.25*\x}} 
}
\foreach \x in {2, 1.975, ..., 0.05}{
\pgfpathellipse{\pgfpointxy{0}{-1*\x}}{\pgfpointxy{-1*\x}{.15*\x}}{\pgfpointxy{-.75*\x}{-.25*\x}} 
}
\pgfusepath{stroke}
\end{scope}%

\begin{scope}
\draw[draw =black, fill = black!20, opacity = .25]({PX(-2,2,0)}, {PY(-2,2,0)}) to ({PX(-2,-5/2,9/4)}, {PY(-2,-5/2,9/4)}) to ({PX(3,-5/2,9/4)}, {PY(3,-5/2,9/4)}) to ({PX(3,2,0)}, {PY(3,2,0)}) to cycle;
\end{scope}

\begin{scope}
\draw[fill=green!70!black!30, draw =  red!70]({PX({sqrt(8*2/3 - 3*2/3*2/3 - 4)},2/3,2/3)}, {PY({sqrt(8*2/3 - 3*2/3*2/3 - 4)},2/3,2/3)})
	\foreach \i in {.667, .72, ..., 2} {
	to ({PX({sqrt(8*\i - 3*\i*\i - 4)},2-2*\i,\i)}, {PY({sqrt(8*\i - 3*\i*\i - 4)},2-2*\i,\i)}) 
	}
	\foreach \i in {2, 1.95, ..., .667} {
	to ({PX({-sqrt(8*\i - 3*\i*\i - 4)},2-2*\i,\i)}, {PY({-sqrt(8*\i - 3*\i*\i - 4)},2-2*\i,\i)}) 
	}
	to cycle;
\end{scope}

\draw[->]({PX(0,0,.9)}, {PY(0,0,.9)}) to node[right, pos = .9]{\color{black}$y$} ({PX(0,0,3)}, {PY(0,0,3)});
\draw[->]({PX(2.1,0,0)}, {PY(2.1,0,0)}) to node[right, pos = 1]{\color{black}$x_1$} ({PX(3,0,0)}, {PY(3,0,0)});
\draw[->]({PX(0,2,0)}, {PY(0,2,0)}) to node[left, pos = 1]{\color{black}$x_2$} ({PX(0,3,0)}, {PY(0,3,0)});

\draw[draw = none, fill = blue!50, opacity = .5]({PX(0,2,0)}, {PY(0,2,0)}) to ({PX(3, 2,0)}, {PY(3, 2,0)}) to ({PX(3, -2,2)}, {PY(3, -2,2)}) to ({PX(2,-2,2)}, {PY(2,-2,2)}) to cycle;
\draw[blue, thick]({PX(2,-2,2)}, {PY(2,-2,2)}) to ({PX(3, -2,2)}, {PY(3, -2,2)});
\draw[blue, thick]({PX(2,-2,2)}, {PY(2,-2,2)}) to ({PX(0,2,0)}, {PY(0,2,0)});

\node[black!50] at ({PX(-3.5,2,0)}, {PY(-3.5,2,0)}) {$H$};
\node(qg)[green!70!black!70] at ({PX(-5,0,1)}, {PY(-5,0,1)}) {$Q_g$};
\node[blue] at ({PX(6,0,1)}, {PY(6,0,1)}) {$C_{\Gamma} \cap H$};
\draw[green!70!black!70, ->, thick] (qg) to ({PX(-1.25,0,1)}, {PY(-1.25,0,1)});

\end{tikzpicture}
&
\begin{tikzpicture}[scale = .4,
declare function={
PX(\x,\y,\z) = \x - .75*\y ;
PY(\x,\y,\z) = -.15*\x-.25*\y + \z;
} 
]

\draw[draw = none] ({PX(-2,2.15,-2)}, {PY(-2,2.15,-2)}) to ({PX(-2,2.15,2)}, {PY(-2,2.15,2)}) to ({PX(-2,-2.15,2)}, {PY(-2,-2.15,2)}) to ({PX(2.15,-2.15,2)}, {PY(2.15,-2.15,2)}) to ({PX(2.15,-2.15,-2)}, {PY(2.15,-2.15,-2)}) to ({PX(2.15,2.15,-2)}, {PY(2.15,2.15,-2)}) to cycle;

\begin{scope}%
\clip({PX(-2,2,0)}, {PY(-2,2,0)}) to ({PX(3,2,0)}, {PY(3,2,0)}) to ({PX(3,2,0)}, {PY(3,2,0)}) to ({PX(2.15,-2.15,-2)}, {PY(2.15,-2.15,-2)}) to ({PX(2.15,2.15,-2)}, {PY(2.15,2.15,-2)}) to ({PX(-2,2.15,-2)}, {PY(-2,2.15,-2)}) to cycle;
\color{red!70}
\pgfsetstrokeopacity{.65}
\foreach \x in {2, 1.975, ..., 0.05}{
\pgfpathellipse{\pgfpointxy{0}{-1*\x}}{\pgfpointxy{-1*\x}{.15*\x}}{\pgfpointxy{-.75*\x}{-.25*\x}} 
}
\pgfusepath{stroke}
\end{scope}%

\begin{scope}%
\clip
	({PX({-sqrt(8*.75 - 3*.75*.75 - 4)},2-2*.75,.75)}, {PY({-sqrt(8*.75 - 3*.75*.75 - 4)},2-2*.75,.75)}) 
	to ({PX(-2,2,0)}, {PY(-2,2,0)})
	to ({PX(3,2,0)}, {PY(3,2,0)})
	to ({PX(2,-2,2)}, {PY(2,-2,2)}) 
	to ({PX(0,-2,2)}, {PY(0,-2,2)}) 
	to cycle;

\color{red!70}
\pgfsetstrokeopacity{0.15}
\foreach \x in {0.05, 0.1, ..., 2}{
\pgfpathellipse{\pgfpointxy{0}{1*\x}}{\pgfpointxy{-1*\x}{.15*\x}}{\pgfpointxy{-.75*\x}{-.25*\x}} 
}
\foreach \x in {2, 1.975, ..., 0.05}{
\pgfpathellipse{\pgfpointxy{0}{-1*\x}}{\pgfpointxy{-1*\x}{.15*\x}}{\pgfpointxy{-.75*\x}{-.25*\x}} 
}
\pgfusepath{stroke}
\end{scope}%

\begin{scope}
\draw[draw =black, fill = black!20, opacity = .25]({PX(-3,2,0)}, {PY(-3,2,0)}) to ({PX(-3,-5/2,9/4)}, {PY(-3,-5/2,9/4)}) to ({PX(3,-5/2,9/4)}, {PY(3,-5/2,9/4)}) to ({PX(3,2,0)}, {PY(3,2,0)}) to cycle;
\end{scope}

\begin{scope}
\draw[fill=green!70!black!30, draw =  red!70]({PX({sqrt(8*2/3 - 3*2/3*2/3 - 4)},2/3,2/3)}, {PY({sqrt(8*2/3 - 3*2/3*2/3 - 4)},2/3,2/3)})
	\foreach \i in {.667, .72, ..., 2} {
	to ({PX({sqrt(8*\i - 3*\i*\i - 4)},2-2*\i,\i)}, {PY({sqrt(8*\i - 3*\i*\i - 4)},2-2*\i,\i)}) 
	}
	\foreach \i in {2, 1.95, ..., .667} {
	to ({PX({-sqrt(8*\i - 3*\i*\i - 4)},2-2*\i,\i)}, {PY({-sqrt(8*\i - 3*\i*\i - 4)},2-2*\i,\i)}) 
	}
	to cycle;
\end{scope}

\draw[draw = none, fill = blue!50, opacity = .5]({PX(2.25,-5/2,2.25)}, {PY(2.25,-5/2,2.25)}) to ({PX(0,2,0)}, {PY(0,2,0)}) to ({PX(3,2,0)}, {PY(3,2,0)}) to ({PX(3,-5/2,9/4)}, {PY(3,-5/2,9/4)}) to cycle;
\draw[blue, thick]({PX(2.25,-5/2,2.25)}, {PY(2.25,-5/2,2.25)}) to ({PX(0,2,0)}, {PY(0,2,0)});
\node[blue] at ({PX(6,0,1)}, {PY(6,0,1)}) {$C^G_{\Gamma} \cap H$};
\end{tikzpicture}

\\
(a) & (b)
\end{tabular}
\caption{
For $\mbfs{a} = (0,-1/2)$ and $\mbfs{d} = -1$, $H$ is added to Figure~\ref{fig0}. 
The axes are oriented to clarify the figures. 
The hyperplane $H$ is in gray, and $Q_g$ is in green.
(a) $C_{\Gamma} \cap H$ (in blue) is not maximal $Q_g$-free.
(b) Removing the inequality $-\Gamma(-1){}^\top \mbfs{x} - \mbfs{y} \le 0$ from the description of $C_{\Gamma}$ yields the maximal $Q_g$-free set $C^G_{\Gamma} \cap H$, shown in blue.
}\label{fig1}
\end{figure}

\begin{theorem}\label{thm:easycase}
Suppose $\|\mbfs{a}\|\leq \|\mbfs{d}\| = 1$.
Let $\Gamma: D^m \to D^n$ be such that $C_{\Gamma}$ is a full-dimensional maximal $Q_h$-free set.
Then $C^G_{\Gamma} \cap H$ is a full-dimensional maximal $Q_g$-free set if and only if $\mbfs{a} \neq \Gamma(-\mbfs{d})$.
\end{theorem}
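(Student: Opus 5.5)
The plan is to prove the two directions separately, with sufficiency split into three properties: full-dimensionality, $Q_g$-freeness, and maximality. Throughout I use Theorem~\ref{thmHQuad}: $\Gamma$ is non-expansive and $\mbfs{0}\notin\conv(\Gamma(D^m))$.

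\emph{Necessity.} Suppose $\mbfs{a}=\Gamma(-\mbfs{d})$. Then $\|\mbfs{a}\|=1$, and $\mbfs{a}{}^\top\Gamma(-\mbfs{d})+\mbfs{d}{}^\top(-\mbfs{d})=1-1=0$, so $-\mbfs{d}\in G$. The corresponding inequality reads $-\mbfs{a}{}^\top\mbfs{x}-\mbfs{d}{}^\top\mbfs{y}\le 0$. On $H$ its left-hand side equals $1$, so $C^G_\Gamma\cap H=\emptyset$, which is not full-dimensional.

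\emph{Full-dimensionality.} Assume $\mbfs{a}\neq\Gamma(-\mbfs{d})$. I will show that $C^G_\Gamma\cap H$ has relative interior in $H$. Since $C^G_\Gamma$ is a closed convex cone, this fails only if every point $(\mbfs{x},\mbfs{y})\in C^G_\Gamma$ satisfies $\mbfs{a}{}^\top\mbfs{x}+\mbfs{d}{}^\top\mbfs{y}\ge 0$. By Farkas' lemma in its closed-cone version, that condition puts $-(\mbfs{a},\mbfs{d})$ in the closed conic hull of $\{(-\Gamma(\mbfs{\beta}),\mbfs{\beta}):\mbfs{\beta}\in G\}$. By compactness of $G$ and Carathéodory, this gives $\mbfs{a}=\sum\lambda_i\Gamma(\mbfs{\beta}_i)$ and $\mbfs{d}=-\sum\lambda_i\mbfs{\beta}_i$ with $\lambda_i\ge0$ and $\mbfs{\beta}_i\in G$. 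Pairing with $(\mbfs{a},\mbfs{d})$ and using $\mbfs{a}{}^\top\Gamma(\mbfs{\beta}_i)+\mbfs{d}{}^\top\mbfs{\beta}_i\le 0$ gives $\|\mbfs{a}\|^2-\|\mbfs{d}\|^2=\sum\lambda_i(\mbfs{a}{}^\top\Gamma(\mbfs{\beta}_i)+\mbfs{d}{}^\top\mbfs{\beta}_i)\le 0$. Combined with $\|\mbfs{a}\|\le 1=\|\mbfs{d}\|$ and the norm identities $\|\sum\lambda_i\Gamma(\mbfs{\beta}_i)\|\le\sum\lambda_i$ and $\|\sum\lambda_i\mbfs{\beta}_i\|=1$, I expect a tightness argument to force all $\mbfs{\beta}_i=-\mbfs{d}$, hence $\mbfs{a}=\Gamma(-\mbfs{d})$, a contradiction. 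The degenerate lower-dimensional cases (nonempty but not full-dimensional) should be handled by the same separation argument.

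\emph{$Q_g$-freeness.} Let $(\mbfs{x},\mbfs{y})\in Q_g$. Then $\mbfs{y}\neq\mbfs{0}$, since $\mbfs{y}=\mbfs{0}$ forces $\mbfs{x}=\mbfs{0}\notin H$. I need some $\mbfs{\beta}\in G$ with $-\Gamma(\mbfs{\beta}){}^\top\mbfs{x}+\mbfs{\beta}{}^\top\mbfs{y}\ge0$. The natural candidate is $\bar{\mbfs{\beta}}=\mbfs{y}/\|\mbfs{y}\|$, which works for the inequality but may fail to lie in $G$. When $\bar{\mbfs{\beta}}\notin G$, I would move along the geodesic on $D^m$ from $\bar{\mbfs{\beta}}$ to $-\mbfs{d}$; note $-\mbfs{d}\in G$ because $\mbfs{a}{}^\top\Gamma(-\mbfs{d})-1\le\|\mbfs{a}\|-1\le0$. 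Along this path I would track both $\phi(\mbfs{\beta})=-\Gamma(\mbfs{\beta}){}^\top\mbfs{x}+\mbfs{\beta}{}^\top\mbfs{y}$ and $\psi(\mbfs{\beta})=\mbfs{a}{}^\top\Gamma(\mbfs{\beta})+\mbfs{d}{}^\top\mbfs{\beta}$. The goal is to show that $\phi\ge0$ at the first point where $\psi\le 0$. For this I would use the identity $\mbfs{a}{}^\top\mbfs{x}+\mbfs{d}{}^\top\mbfs{y}=-1$, non-expansiveness, and $\|\mbfs{a}\|\le\|\mbfs{d}\|$. Concretely, I would consider combinations $\phi+\mu\psi$ evaluated against $(\mbfs{x},\mbfs{y})$. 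I expect this step to be the main obstacle, because it is exactly where the hypothesis $\|\mbfs{a}\|\le\|\mbfs{d}\|$ must enter.

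\emph{Maximality.} Let $K\supsetneq C^G_\Gamma\cap H$ be convex and contained in $H$. Then $K$ contains a point $\mbfs{p}$ violating the inequality for some $\mbfs{\beta}\in G$. Since $C_\Gamma$ is maximal $Q_h$-free, by \cite{munoz2025characterization} the face of $C_\Gamma$ indexed by $\mbfs{\beta}$ touches $Q_h$ along the ray spanned by $(\Gamma(\mbfs{\beta}),\mbfs{\beta})$. If $\psi(\mbfs{\beta})<0$, this ray meets $H$ at $t=-1/\psi(\mbfs{\beta})>0$. This gives a point of $Q_g$ on the relative boundary of $C^G_\Gamma\cap H$, lying in the relative interior of the facet. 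Convexifying with $\mbfs{p}$ then places a point of $Q_g$, or a nearby point of $Q_g$ obtained by perturbing along the cone $\|\mbfs{x}\|\le\|\mbfs{y}\|$, in the relative interior of $K$. If instead $\psi(\mbfs{\beta})=0$, I would take a limit of nearby $\mbfs{\beta}'\in G$ with $\psi(\mbfs{\beta}')<0$, or otherwise argue that the facet's $Q_g$-points recede to infinity along a direction that still yields an interior point of $K$.
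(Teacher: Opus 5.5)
Your necessity direction is correct and matches the paper. The sufficiency direction, however, has a genuine gap in the maximality step. You assert that $q=(\Gamma(\mbfs{\beta}),\mbfs{\beta})/(-\psi(\mbfs{\beta}))\in Q_g$ lies in the relative interior of ``the facet'' indexed by $\mbfs{\beta}$, so that convexifying with $\mbfs{p}$ (plus a perturbation) puts a point of $Q_g$ into $\relint K$. Nothing supports this. The set $C^G_\Gamma\cap H$ is in general not polyhedral, and the $\mbfs{\beta}$-inequality need not define a facet or even be irredundant. Moreover, $q$ is tight for every $\mbfs{\beta}'$ with $\Gamma(\mbfs{\beta}')^\top\Gamma(\mbfs{\beta})=(\mbfs{\beta}')^\top\mbfs{\beta}$, e.g.\ all $\mbfs{\beta}'$ near $\mbfs{\beta}$ if $\Gamma$ is locally an isometry. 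A boundary point enters $\relint\conv(\{\mbfs{p}\}\cup(C^G_\Gamma\cap H))$ only if $\mbfs{p}$ strictly violates \emph{every} inequality supporting the set at that point. If the $\mbfs{\beta}$-inequality is a conic combination of others, $\mbfs{p}$ may violate it while satisfying another inequality tight at $q$; then $q$ stays on the relative boundary of $K$ and the perturbation has nothing to work with. The missing idea is the paper's exposing-sequence machinery:
\begin{itemize}
\item Lemma~\ref{lemma:exposedrays} keeps only the indices generating exposed rays.
\item Lemma~\ref{lemma:exposing_common} builds, for each such $\overline{\mbfs{\beta}}$, a sequence in $Q_g$ whose separating inequalities are forced to converge to the $\overline{\mbfs{\beta}}$-inequality. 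These are rescalings of $\Gamma(\mbfs{\beta}_t)+\varepsilon_t^2\overline{\mbfs{x}}$ and $(1+\alpha_t\varepsilon_t^2+\varepsilon_t^3)\mbfs{\beta}_t+\varepsilon_t^2\overline{\mbfs{y}}$ with $\mbfs{\beta}_t\in G^\circ$.
\item Theorem~\ref{thm:maximalitycriterion} tilts the separators of a $Q_g$-free $K'$ via Lemma~\ref{lem:Wavy2} and passes to the limit to show that $K'$ satisfies that inequality.
\end{itemize}
Your treatment of $\psi(\mbfs{\beta})=0$ also fails as stated. Such a $\mbfs{\beta}$ need not lie in $\cl(G^\circ)$; in Figure~\ref{example:easycase}, $\mbfs{\beta}_1\in G\setminus\cl(G^\circ)$. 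So there may be no nearby $\mbfs{\beta}'$ with $\psi(\mbfs{\beta}')<0$. The paper handles these indices with Lemma~\ref{lem:Implied0}, which shows their inequalities are implied on $H$ by those indexed by $G^\circ$, so that $C^G_\Gamma\cap H=C^{\cl(G^\circ)}_\Gamma\cap H$.

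The other two sufficiency steps are unfinished but repairable.

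\emph{Full-dimensionality.} Pairing with $(\mbfs{a},\mbfs{d})$ only yields $\|\mbfs{a}\|^2-\|\mbfs{d}\|^2\le 0$, and the tightness you hope for (all $\mbfs{\beta}_i=-\mbfs{d}$) does not follow from the facts you list. What is missing is non-expansiveness at $-\mbfs{d}$. The point $(\Gamma(-\mbfs{d}),-\mbfs{d})$ lies in $C_\Gamma\subseteq C^G_\Gamma$ and satisfies $\mbfs{a}^\top\Gamma(-\mbfs{d})-1<0$ because $\mbfs{a}\neq\Gamma(-\mbfs{d})$. Hence Lemma~\ref{lem:fulldim} applies at once, which is the paper's argument.

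\emph{$Q_g$-freeness.} Your geodesic plan is sound; the computation you are missing is the paper's Claim~\ref{claim4}.
\begin{itemize}
\item Set $\overline{\mbfs{\beta}}=\mbfs{y}/\|\mbfs{y}\|$. It differs from $\mbfs{d}$, since otherwise $\mbfs{a}^\top\mbfs{x}=-1-\|\mbfs{y}\|<-\|\mbfs{x}\|$, contradicting Cauchy--Schwarz. If $\psi(\overline{\mbfs{\beta}})\le 0$, take $\overline{\mbfs{\beta}}$ itself.
\item Otherwise take a zero $\mbfs{\beta}^*$ of $\psi$ on the arc toward $-\mbfs{d}$; it exists since $\psi(-\mbfs{d})<0$. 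Write $\overline{\mbfs{\beta}}=s\mbfs{\beta}^*+\mu\mbfs{d}$ with $s>0$ and $\mu\ge 0$.
\item Using $\mbfs{d}^\top\mbfs{y}=-1-\mbfs{a}^\top\mbfs{x}$ gives $s\,\phi(\mbfs{\beta}^*)=\|\mbfs{y}\|+\mu-(s\Gamma(\mbfs{\beta}^*)-\mu\mbfs{a})^\top\mbfs{x}$.
\item From $\psi(\mbfs{\beta}^*)=0$ and $\|\mbfs{a}\|\le 1$ you get $\|s\Gamma(\mbfs{\beta}^*)-\mu\mbfs{a}\|^2=s^2+2s\mu\,\mbfs{d}^\top\mbfs{\beta}^*+\mu^2\|\mbfs{a}\|^2\le\|s\mbfs{\beta}^*+\mu\mbfs{d}\|^2=1$.
\item Hence $s\,\phi(\mbfs{\beta}^*)\ge\mu\ge 0$, with $\mbfs{\beta}^*\in G$.
\end{itemize}
This pointwise form is slightly more direct than the paper's route. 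The paper packages the same estimate into the auxiliary function $\overline{\Gamma}$ and needs Lemma~\ref{lem:Implied0} to identify $C^G_\Gamma\cap H$ with $C_{\overline{\Gamma}}\cap H$.
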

Our proof of Theorem~\ref{thm:easycase} reveals another maximality characterization when $\|\mbfs{a}\|\leq \|\mbfs{d}\| = 1$; see Remark~\ref{rm:GcircG} for this similar characterization.

The second case of our characterization assumes $\|\mbfs{d}\| <  \|\mbfs{a}\| = 1$.
See Figure~\ref{fig2} for an example.
In this case, we need $G = D^m$ for maximality of $C^G_{\Gamma} \cap H$.
%

\begin{figure}[h]
\centering
\begin{tabular}{c@{\hskip 1 cm}c}
\begin{tikzpicture}[scale = .4,
declare function={
PX(\x,\y,\z) = \x - .75*\y ;
PY(\x,\y,\z) = -.15*\x-.25*\y + \z;
} 
]

\draw[opacity = .15]({PX(0,0,0)}, {PY(0,0,0)}) to ({PX(0,0,1.55)}, {PY(0,0,1.55)});
\draw[opacity = .15]({PX(0,0,0)}, {PY(0,0,0)}) to ({PX(1.25,0,0)}, {PY(1.25,0,0)});
\draw[opacity = .15]({PX(0,0,0)}, {PY(0,0,0)}) to ({PX(0,.7,0)}, {PY(0,.7,0)});

\draw[draw = none] ({PX(-2,2.15,-2)}, {PY(-2,2.15,-2)}) to ({PX(-2,2.15,2)}, {PY(-2,2.15,2)}) to ({PX(-2,-2.15,2)}, {PY(-2,-2.15,2)}) to ({PX(2.15,-2.15,2)}, {PY(2.15,-2.15,2)}) to ({PX(2.15,-2.15,-2)}, {PY(2.15,-2.15,-2)}) to ({PX(2.15,2.15,-2)}, {PY(2.15,2.15,-2)}) to cycle;

\begin{scope}%
\clip ({PX(1,-2,2)}, {PY(1,-2,2)}) to ({PX(-3,-2,2)}, {PY(-3,-2,2)}) to ({PX(-3,2,2)}, {PY(-3,2,2)}) to ({PX(-3,2,-2)}, {PY(-3,2,-2)})  to ({PX(1,2,-2)}, {PY(1,2,-2)}) to ({PX(1,2,2)}, {PY(1,2,2)}) to cycle;
\color{red!70}
\pgfsetstrokeopacity{.65}
\foreach \x in {2, 1.975, ..., 0.05}{
\pgfpathellipse{\pgfpointxy{0}{-1*\x}}{\pgfpointxy{-1*\x}{.15*\x}}{\pgfpointxy{-.75*\x}{-.25*\x}} 
}

\foreach \x in {0.05, 0.1, ..., 2}{
\pgfpathellipse{\pgfpointxy{0}{1*\x}}{\pgfpointxy{-1*\x}{.15*\x}}{\pgfpointxy{-.75*\x}{-.25*\x}} 
}
\pgfusepath{stroke}
\end{scope}%

\begin{scope}
\draw[draw =black, fill = black!20, opacity = .25]({PX(1,-2,2)}, {PY(1,-2,2)}) to ({PX(1,-2,-2)}, {PY(1,-2,-2)}) to ({PX(1,2,-2)}, {PY(1,2,-2)}) to ({PX(1,2,2)}, {PY(1,2,2)}) to cycle;

\draw[fill=green!70!black!30, draw =  none]({PX(1,0,1)}, {PY(1,0,1)})
	\foreach \i in {1, 1.1, ..., 2.1} {
	to ({PX(1,{sqrt(\i*\i - 1)},\i)}, {PY(1,{sqrt(\i*\i - 1)},\i)}) 
	}
	\foreach \i in {2, 1.9, ..., 1} {
	to ({PX(1,{-sqrt(\i*\i - 1)},\i)}, {PY(1,{-sqrt(\i*\i - 1)},\i)}) 
	}
	to cycle;
	
\draw[draw =  red!70]
	({PX(1,{-sqrt(2*2 - 1)},2)}, {PY(1,{-sqrt(2*2 - 1)},2)}) 
	\foreach \i in {2, 1.9, ..., 1} {
	to ({PX(1,{-sqrt(\i*\i - 1)},\i)}, {PY(1,{-sqrt(\i*\i - 1)},\i)}) 
	}
	to ({PX(1,0,1)}, {PY(1,0,1)})
	\foreach \i in {1, 1.1, ..., 2.1} {
	to ({PX(1,{sqrt(\i*\i - 1)},\i)}, {PY(1,{sqrt(\i*\i - 1)},\i)}) 
	};
	
\draw[fill=green!70!black!30, draw =  none]({PX(1,0,-1)}, {PY(1,0,-1)})
	\foreach \i in {1, 1.1, ..., 2.1} {
	to ({PX(1,{sqrt(\i*\i - 1)},-\i)}, {PY(1,{sqrt(\i*\i - 1)},-\i)}) 
	}
	\foreach \i in {2, 1.9, ..., 1} {
	to ({PX(1,{-sqrt(\i*\i - 1)},-\i)}, {PY(1,{-sqrt(\i*\i - 1)},-\i)}) 
	}
	to cycle;
	
\draw[draw =  red!70]
	({PX(1,{-sqrt(2*2 - 1)},-2)}, {PY(1,{-sqrt(2*2 - 1)},-2)}) 
	\foreach \i in {2, 1.9, ..., 1} {
	to ({PX(1,{-sqrt(\i*\i - 1)},-\i)}, {PY(1,{-sqrt(\i*\i - 1)},-\i)}) 
	}
	to ({PX(1,0,-1)}, {PY(1,0,-1)})
	\foreach \i in {1, 1.1, ..., 2.1} {
	to ({PX(1,{sqrt(\i*\i - 1)},-\i)}, {PY(1,{sqrt(\i*\i - 1)},-\i)}) 
	};

\end{scope}

\begin{scope}
\clip ({PX(1,-2,2)}, {PY(1,-2,2)}) to ({PX(1,-2,-2)}, {PY(2,-2,-2)}) to ({PX(1, 2,-2)}, {PY(1,2,-2)}) to ({PX(1,2,2)}, {PY(1,2,2)}) to cycle;

\draw[draw = none, fill = blue!50, opacity = .5]({PX(1,-1,1)}, {PY(1,-1,1)}) to ({PX(1,5,1)}, {PY(1,5,1)}) to ({PX(1,5,-5)}, {PY(1,5,-5)}) to cycle;
\end{scope}

\draw[blue, thick]({PX(1,-1,1)}, {PY(1,-1,1)}) to ({PX(1,2,1)}, {PY(1,2,1)});
\draw[blue, thick]({PX(1,-1,1)}, {PY(1,-1,1)}) to ({PX(1,2,-2)}, {PY(1,2,-2)});

\begin{scope}
\clip({PX(1,-2,2)}, {PY(1,-2,2)}) to ({PX(3,-2,2)}, {PY(3,-2,2)}) to ({PX(3,-2,-2)}, {PY(3,-2,-2)}) to ({PX(3,2,-2)}, {PY(3,2,-2)})  to ({PX(1,2,-2)}, {PY(1,2,-2)}) to ({PX(1,2,2)}, {PY(1,2,2)}) to cycle;
\color{red!20}
\pgfsetstrokeopacity{0.25}
\foreach \x in {2, 1.95, ..., 0.05}{
\pgfpathellipse{\pgfpointxy{0}{-1*\x}}{\pgfpointxy{-1*\x}{.15*\x}}{\pgfpointxy{-.75*\x}{-.25*\x}} 
}

\foreach \x in {0.05, 0.1, ..., 2}{
\pgfpathellipse{\pgfpointxy{0}{1*\x}}{\pgfpointxy{-1*\x}{.15*\x}}{\pgfpointxy{-.75*\x}{-.25*\x}} 
}
\pgfusepath{stroke}
\end{scope}

\draw[->]({PX(0,0,1.55)}, {PY(0,0,1.55)}) to node[pos = 1, left]{$y$} ({PX(0,0,3)}, {PY(0,0,3)});
\draw[->]({PX(1.25,0,0)}, {PY(1.25,0,0)}) to node[pos = 1, right]{$x_1$}({PX(3,0,0)}, {PY(3,0,0)});
\draw[->]({PX(0,.7,0)}, {PY(0,.7,0)}) to node[pos = 1, left]{$x_2$} ({PX(0,3,0)}, {PY(0,3,0)});

\end{tikzpicture}
&
\begin{tikzpicture}[scale = .4,
declare function={
PX(\x,\y,\z) = \x - .75*\y ;
PY(\x,\y,\z) = -.15*\x-.25*\y + \z;
} 
]

\draw[draw = none] ({PX(-2,2.15,-2)}, {PY(-2,2.15,-2)}) to ({PX(-2,2.15,2)}, {PY(-2,2.15,2)}) to ({PX(-2,-2.15,2)}, {PY(-2,-2.15,2)}) to ({PX(2.15,-2.15,2)}, {PY(2.15,-2.15,2)}) to ({PX(2.15,-2.15,-2)}, {PY(2.15,-2.15,-2)}) to ({PX(2.15,2.15,-2)}, {PY(2.15,2.15,-2)}) to cycle;

\begin{scope}%
\clip({PX(-2,-1,2)}, {PY(-2,-1,2)}) to ({PX(0,-2,2)}, {PY(0,-2,2)})  to ({PX(2,-2,2)}, {PY(2,-2,2)}) to ({PX(3,-1,2)}, {PY(3,-1,2)})  to cycle;
\color{red!70}
\pgfsetstrokeopacity{.65}
\foreach \x in {0.05, 0.1, ..., 2}{
\pgfpathellipse{\pgfpointxy{0}{1*\x}}{\pgfpointxy{-1*\x}{.15*\x}}{\pgfpointxy{-.75*\x}{-.25*\x}} 
}
\pgfusepath{stroke}
\end{scope}%

\begin{scope}
\draw[draw =black, fill = black!20, opacity = .25]({PX(-2,-1,2)}, {PY(-2,-1,2)}) to ({PX(3,-1,2)}, {PY(3,-1,2)}) to ({PX(3,-1,-2)}, {PY(3,-1,-2)}) to ({PX(-2,-1,-2)}, {PY(-2,-1,-2)}) to cycle;

\draw[fill=green!70!black!30, draw =  none]({PX(0,-1,1)}, {PY(0,-1,1)})
	\foreach \i in {1, 1.1, ..., 2.1} {
	to ({PX({sqrt(\i*\i - 1)},-1,\i)}, {PY({sqrt(\i*\i - 1)},-1,\i)}) 
	}
	\foreach \i in {2.0, 1.9, ..., 1} {
	to ({PX({-sqrt(\i*\i - 1)},-1,\i)}, {PY({-sqrt(\i*\i - 1)},-1,\i)}) 
	}
	to cycle;
	
\draw[draw =  red!70]
	({PX({-sqrt(2*2 - 1)},-1,2)}, {PY({-sqrt(2*2 - 1)},-1,2)}) 
	\foreach \i in {2.0, 1.9, ..., 1} {
	to ({PX({-sqrt(\i*\i - 1)},-1,\i)}, {PY({-sqrt(\i*\i - 1)},-1,\i)}) 
	}
	to ({PX(0,-1,1)}, {PY(0,-1,1)})
	\foreach \i in {1, 1.1, ..., 2.1} {
	to ({PX({sqrt(\i*\i - 1)},-1,\i)}, {PY({sqrt(\i*\i - 1)},-1,\i)}) 
	};
	
\draw[fill=green!70!black!30, draw =  none]({PX(0,-1,-1)}, {PY(0,-1,-1)})
	\foreach \i in {1, 1.05, ..., 2.05} {
	to ({PX({sqrt(\i*\i - 1)},-1,-\i)}, {PY({sqrt(\i*\i - 1)},-1,-\i)}) 
	}
	\foreach \i in {2.0, 1.95, ..., 1} {
	to ({PX({-sqrt(\i*\i - 1)},-1,-\i)}, {PY({-sqrt(\i*\i - 1)},-1,-\i)}) 
	}
	to cycle;

\draw[draw =  red!70]
	({PX({-sqrt(2*2 - 1)},-1,-2)}, {PY({-sqrt(2*2 - 1)},-1,-2)}) 
	\foreach \i in {2.0, 1.9, ..., 1} {
	to ({PX({-sqrt(\i*\i - 1)},-1,-\i)}, {PY({-sqrt(\i*\i - 1)},-1,-\i)}) 
	}
	to ({PX(0,-1,-1)}, {PY(0,-1,-1)})
	\foreach \i in {1, 1.1, ..., 2.1} {
	to ({PX({sqrt(\i*\i - 1)},-1,-\i)}, {PY({sqrt(\i*\i - 1)},-1,-\i)}) 
	};	
	
\end{scope}

\draw[draw = none, fill = blue!50, opacity = .5]({PX(1,-1,1)}, {PY(1,-1,1)}) to ({PX(3,-1,1)}, {PY(3,-1,1)}) to({PX(3,-1,2)}, {PY(3,-1,2)}) to ({PX(2,-1,2)}, {PY(2,-1,2)}) to cycle;
\draw[blue, thick]({PX(1,-1,1)}, {PY(1,-1,1)}) to ({PX(3,-1,1)}, {PY(3,-1,1)});
\draw[blue, thick]({PX(1,-1,1)}, {PY(1,-1,1)}) to ({PX(2,-1,2)}, {PY(2,-1,2)});

\end{tikzpicture}
\\
(a) $\mbfs{a} = (-1,0)$ and $\mbfs{d} = 0$  & (b) $\mbfs{a} = (0,1)$ and $\mbfs{d} = 0$
\end{tabular}
\caption{The set $H$ is added to Figure~\ref{fig0} for different choices of $(\mbfs{a},\mbfs{d})$.
The axes are oriented to clarify the figures.
The hyperplane $H$ is in gray, and $Q_g$ is in green.
(a) $C^G_{\Gamma} \cap H = C_{\Gamma} \cap H$ (in blue) is maximal $Q_g$-free.
(b) $C_{\Gamma} \cap H$ (in blue) is not maximal $Q_g$-free.
Furthermore, the set $C^G_{\Gamma}$ in (b) is obtained by dropping one inequality, which results in a set that is no longer $Q_g$-free.
}\label{fig2}
\end{figure}

\begin{theorem}\label{thm:hardcase}
Suppose $\|\mbfs{d}\| < \|\mbfs{a}\| = 1$.
Let $\Gamma: D^m \to D^n$ be such that $C_{\Gamma}$ is a full-dimensional maximal $Q_h$-free set.
Then $C^G_{\Gamma} \cap H$ is a full-dimensional maximal $Q_g$-free set if and only if $G = D^m$. 
\end{theorem}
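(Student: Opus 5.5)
We prove the two directions separately, working inside $H$ with the relative topology. Throughout, $\Gamma$ is non-expansive and $\mbfs{0}\notin\conv(\Gamma(D^m))$, by Theorem~\ref{thmHQuad}.

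\emph{($\Leftarrow$).} Suppose $G=D^m$, so that $C^G_\Gamma\cap H=C_\Gamma\cap H$.

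First, $C_\Gamma\cap H$ is $Q_g$-free. Its relative interior is contained in $\inte C_\Gamma$ whenever $C_\Gamma\cap H$ is full-dimensional in $H$. Since $C_\Gamma$ is $Q_h$-free, this relative interior misses $Q_h\cap H=Q_g$.

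Second, $C_\Gamma\cap H$ is full-dimensional. The hypothesis $G=D^m$ says $\mbfs{a}^\top\Gamma(\mbfs\beta)+\mbfs{d}^\top\mbfs\beta\le 0$ for every $\mbfs\beta$. I would exhibit a point $(\mbfs x,\mbfs y)\in H$ at which every constraint of $C_\Gamma$ is strict. The natural candidate is $(\mbfs x,\mbfs y)=t(-\mbfs a,-\mbfs d)+(\mbfs u,\mbfs 0)$ with $t$ large, where $\mbfs u$ is a direction separating $\mbfs 0$ from $\conv\Gamma(D^m)$, i.e.\ $\Gamma(\mbfs\beta)^\top\mbfs u\ge\epsilon>0$ for all $\mbfs\beta$. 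Scaling and translating to land in $H$ should give strict interiority, with compactness supplying uniformity. If $\mbfs a$ is parallel to $\mbfs u$, a small perturbation should handle it.

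Third, maximality. Suppose $K\supsetneq C_\Gamma\cap H$ is convex and $Q_g$-free in $H$. Then $K$ contains a point $\mbfs p\in H$ violating some inequality $\mbfs\beta_0$ strictly. The inequality indexed by $\mbfs\beta_0$ should be tight at a point of $Q_g$; in the homogeneous setting the tight point is $(\Gamma(\mbfs\beta_0),\mbfs\beta_0)$ on the boundary of $Q_h$. Because $\mbfs a^\top\Gamma(\mbfs\beta_0)+\mbfs d^\top\mbfs\beta_0\le0$, a positive multiple $\lambda(\Gamma(\mbfs\beta_0),\mbfs\beta_0)$ lies in $H$ when the inequality is strict. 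In the equality case I would instead take a limit along the ray, or use the recession direction. The convex hull of $\mbfs p$ and a neighborhood of the relevant boundary face then contains, in its relative interior, points of $Q_g$. This mirrors the homogeneous maximality proof of Mu\~noz et al., intersected with $H$.

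\emph{($\Rightarrow$).} Suppose $C^G_\Gamma\cap H$ is maximal, and assume for contradiction that some $\mbfs\beta_0\notin G$, i.e.\ $\mbfs a^\top\Gamma(\mbfs\beta_0)+\mbfs d^\top\mbfs\beta_0>0$. Here the hypothesis $\|\mbfs d\|<\|\mbfs a\|=1$ is used. Remove the constraints indexed by a neighborhood of $\mbfs\beta_0$, and show that the relaxed set then contains a point of $Q_g$ in its relative interior, contradicting $Q_g$-freeness. The point I would aim for is built as follows. Pick $\mbfs y$ with $\mbfs d^\top\mbfs y$ controlled, and choose $\mbfs x=-s\mbfs a+\mbfs w$ with $\|\mbfs x\|$ just below $\|\mbfs y\|$ and $\mbfs a^\top\mbfs x+\mbfs d^\top\mbfs y=-1$. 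Since $\|\mbfs a\|=1>\|\mbfs d\|$, the equation can be satisfied with $\|\mbfs x\|<\|\mbfs y\|$ while $\mbfs x$ points roughly along $-\mbfs a$, or along $\Gamma(\mbfs\beta_0)$, making the remaining constraints strict near $\mbfs\beta_0$. Figure~\ref{fig2}(b) illustrates this. The contradiction would then show that $C^G_\Gamma\cap H$ is not $Q_g$-free.

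I expect two steps to be the main obstacles. The first is constructing this explicit witness in $Q_g\cap\relint(C^G_\Gamma\cap H)$ from a single index $\mbfs\beta_0\notin G$. I would first try $\mbfs y=\mbfs\beta_0$ scaled and $\mbfs x$ a combination of $\Gamma(\mbfs\beta_0)$ and $-\mbfs a$, using non-expansiveness to bound the constraints $\mbfs\beta\in G$. The second is handling the boundary equality case in the maximality direction.
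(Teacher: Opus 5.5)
\textbf{The main gap is the maximality step in ($\Leftarrow$).} Your plan needs a point of $Q_g$ at which the violated inequality $\mbfs{\beta}_0$ is tight, followed by a convex-hull argument with $\mbfs{p}$. In general no such point exists.

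Take the example of Figure~\ref{example:hardcase}: $\mbfs{d}=\mbfs{0}$, $\mbfs{a}=(1,0)$, $\Gamma\equiv(0,1)$. There $G=D^m$ and $G^\circ=\emptyset$. The set $C_\Gamma\cap H=\{x_1=-1,\ \|\mbfs{y}\|\le x_2\}$ is disjoint from $Q_g=\{x_1=-1,\ \|\mbfs{y}\|^2\ge 1+x_2^2\}$. The two sets meet only at infinity, so every inequality falls into your ``equality case''. ``Take a limit along the ray'' is therefore the entire difficulty, not a side case.

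Even when $\mbfs{\beta}_0\in G^\circ$, so that $\lambda(\Gamma(\mbfs{\beta}_0),\mbfs{\beta}_0)\in Q_g$ exists, the argument is incomplete. Every $\mbfs{\beta}$ with $\Gamma(\mbfs{\beta})^\top\Gamma(\mbfs{\beta}_0)=\mbfs{\beta}^\top\mbfs{\beta}_0$ is also tight at that point. Nothing then places it in the relative interior of $\conv((C_\Gamma\cap H)\cup\{\mbfs{p}\})$.

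The paper resolves this with exposing sequences:
\begin{itemize}
\item it reduces to exposed rays (Lemma~\ref{lemma:exposedrays});
\item it builds sequences in $Q_g$ escaping to infinity (Lemma~\ref{lemma:exposing_common} for $\cl(G^\circ)$, and the explicit $t^{-1},t^{-3/2},t^{-2}$ sequence for $I\setminus\cl(G^\circ)$);
\item it turns these into maximality through Theorem~\ref{thm:maximalitycriterion}, which uses the tilting Lemma~\ref{lem:Wavy2} and the bounded multipliers of Lemma~\ref{lemma:lambdabounded}.
\end{itemize}

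A shorter repair also works. Given any $Q_g$-free $K''\supseteq C_\Gamma\cap H$, enlarge it by Zorn to a maximal $Q_g$-free $K$, and write $K=C_{\Gamma'}\cap H$ by Lemma~\ref{sliceofCgamma}. For each $\mbfs{\beta}$ and $\epsilon>0$, the point $(\Gamma(\mbfs{\beta}),\mbfs{\beta})+\epsilon(-\mbfs{a},\mbfs{d})$ lies in $C_\Gamma$, by non-expansiveness and $G=D^m$. Its $(\mbfs{a},\mbfs{d})$-value is at most $-\epsilon(1-\|\mbfs{d}\|^2)<0$, so a positive multiple of it lies in $C_\Gamma\cap H\subseteq C_{\Gamma'}$. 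Letting $\epsilon\to 0$ gives $(\Gamma(\mbfs{\beta}),\mbfs{\beta})\in C_{\Gamma'}$, i.e.\ $\Gamma'(\mbfs{\beta})^\top\Gamma(\mbfs{\beta})\ge 1$. Hence $\Gamma'=\Gamma$, and $K''\subseteq K=C_\Gamma\cap H$.

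\textbf{Two smaller issues.}

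First, the full-dimensionality step has a sign error. The hypothesis $G=D^m$ says exactly that $(-\mbfs{a},\mbfs{d})\in C_\Gamma$. Since $\mbfs{a}^\top(-\mbfs{a})+\mbfs{d}^\top\mbfs{d}<0$, Lemma~\ref{lem:fulldim} finishes. Along your direction $t(-\mbfs{a},-\mbfs{d})$, the constraint values $t(\mbfs{a}^\top\Gamma(\mbfs{\beta})-\mbfs{d}^\top\mbfs{\beta})$ are not controlled.

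Second, your route for ($\Rightarrow$) differs from the paper's. The paper pushes each inequality with $\mbfs{\beta}\notin G$ out by $\Lambda(\mbfs{\beta})$. It shows $C_\Gamma\cap H\subsetneq C^+_\Gamma\cap H$ with the latter still $Q_g$-free. You instead aim to show that $C^G_\Gamma\cap H$ itself is not $Q_g$-free, which addresses the set in the statement directly.

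Your tentative witness, with $\mbfs{y}$ along $\mbfs{\beta}_0$ only, does not control the constraints $\mbfs{\beta}\in G$. One that works is
\[
(\Gamma(\mbfs{\beta}_0),\mbfs{\beta}_0)+\rho(-\mbfs{a},\mbfs{d}),\qquad \sigma<\rho(1-\|\mbfs{d}\|^2)<2\sigma,\qquad \sigma:=\mbfs{a}^\top\Gamma(\mbfs{\beta}_0)+\mbfs{d}^\top\mbfs{\beta}_0>0.
\]
\begin{itemize}
\item It lies in $C^G_\Gamma$, because both summands do.
\item Its $(\mbfs{a},\mbfs{d})$-value is $\sigma-\rho(1-\|\mbfs{d}\|^2)<0$.
\item It satisfies $\|\mbfs{x}\|^2-\|\mbfs{y}\|^2=\rho(\rho(1-\|\mbfs{d}\|^2)-2\sigma)<0$.
\end{itemize}
Rescale it into $H$, then move slightly toward a relative interior point of the full-dimensional set $C^G_\Gamma\cap H$. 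This produces a point of $Q_g$ in its relative interior.
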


The role of $G$ in Theorems \ref{thm:easycase} and \ref{thm:hardcase} is fundamentally different.
Theorem \ref{thm:hardcase} gives an `all-or-nothing' condition $G = D^m$ if $\|\mbfs{d}\| < \|\mbfs{a}\|=1$.
In contrast, if $\|\mbfs{a}\| \leq \|\mbfs{d}\|=1$, then this all-or-nothing condition may not hold.
For instance, if $\|\mbfs{a}\| < \|\mbfs{d}\|$, then both $G$ and $D^m \setminus G$ are non-empty, as they contain $\mbfs{-d}$ and $\mbfs{d}$, respectively, by the Cauchy-Schwarz Inequality.
Theorem \ref{thm:easycase} implies that for each $\mbfs{\beta}\in D^m \setminus G$, the inequality $-\Gamma(\mbfs{\beta}){}^\top\mbfs{x}+\mbfs{\beta}{}^\top\mbfs{y} \le 0$ can be safely removed from the description of $C_{\Gamma} \cap H$ without destroying the $Q_g$-free property.
Such a removal may not work if $\|\mbfs{d}\| < \|\mbfs{a}\|=1$, as seen in Figure~\ref{fig2}.

As stated earlier, Theorems \ref{thmHQuad},~\ref{thm:easycase}, and~\ref{thm:hardcase} complete a characterization of all maximal $S$-free sets when $S$ is defined by a single quadratic inequality.
They also yield a constructive procedure for generating such sets. Namely, it suffices to construct a non-expansive function $\Gamma$ satisfying $\mbfs{0}\not\in \conv(\{\Gamma(\mbfs{\beta}) \,:\ \mbfs{\beta}\in D^m\})$, together with the case-dependent conditions:
\[
\begin{array}{rl@{\hskip .25cm}l}
 \text{If } \|\mbfs{a}\| \leq \|\mbfs{d}\| = 1,&\text{then}~\mbfs{a} \neq \Gamma(-\mbfs{d}).\\[.15cm]
\text{If } \|\mbfs{d}\| < \|\mbfs{a}\| = 1,&\text{then}~\mbfs{a}{}^\top\Gamma(\mbfs{\beta}) + \mbfs{d}{}^\top\mbfs{\beta} \le 0~ \text{for all}~\mbfs{\beta}\in D^m.
\end{array}
\]

\noindent{\bf Outline.}
The rest of this article is organized as follows.
Section~\ref{sec:Background} provides background results.
Theorem \ref{thm:Both} is then proved in Section~\ref{sec:Both}; our proof simultaneously proves one direction for each of Theorems~\ref{thm:easycase} and~\ref{thm:hardcase}.
In Section~\ref{secMaxH}, we tailor the maximality criterion~\cite[Theorem 1.4]{munoz2025characterization} from Mu\~noz et al.\ to the $Q_g$-free setting; we use this to prove the remaining directions of Theorems~\ref{thm:easycase} and~\ref{thm:hardcase}.
The remaining direction of Theorem~\ref{thm:easycase} and Theorem~\ref{thm:hardcase} are proved in Section~\ref{sec:Maximality_easycase} and Section~\ref{sec:Maximality_hardcase}, respectively.
%

\noindent{\bf Related literature.}
%
Outside of intersection cuts, $S$-free sets have been used as optimality certificates in integer optimization~\cite{averkov2013maximal,BOW2016,BCCWW2017,conforti2016maximal,PSS2022}.
Intersection cuts were introduced by Balas~\cite{B1971} when $S$ is a lattice and by Tuy~\cite{T1964} when $S$ is a reverse convex set.
Since then, intersection cuts have been well studied in several areas of optimization:
mixed-integer linear and conic optimization~\cite{AJ2013,ALWW2007,ABP2018,BDP2019,CCDLM2014},
mixed-integer nonlinear programming~\cite{fischetti2020branch,MKV2016,serrano2019intersection},
rank-one and polynomial optimization~\cite{BCM2019,BCM2020},
bilevel optimization~\cite{fischetti2016intersection},
non-convex quadratic optimization~\cite{chmiela2022implementation,chmiela2025monoidal,MS2020,MS2022},
signomial programming~\cite{xu2025cutting},
and submodular maximization~\cite{xu2025submodular}.

The first maximal quadratic-free sets were constructed in \cite{MS2020,MS2022}.  
Later, their corresponding intersection cut implementation, and their strengthening were studied in \cite{chmiela2022implementation,chmiela2025monoidal}.
These works consider one family of maximal quadratic-free sets, and they have produced positive
empirical results in optimization instances with non-convex quadratic constraints.

Complete characterizations of maximal $S$-free sets are known in only a few settings.
For the case when $S$ is a lattice,  Lov\'{a}sz~\cite{L1989} showed that full-dimensional maximal $S$-free sets are polyhedra with integer points in the relative interior of each facet; see also~\cite{A2013,BCCZ2010,basu2010minimal}. 
Mu\~noz et al.~\cite[Theorem 1.4]{munoz2025characterization} characterized maximality for general $S$, but this level of generality can be difficult to interpret for specific settings.
As previously stated, Mu\~noz et al.~\cite{munoz2025characterization} characterized maximal $Q_h$-free sets.
%

\noindent{\bf Preliminaries and notation.}
%
Throughout the paper, we fix $(\mbfs{a}, \mbfs{d}) \in \mbb{R}^n \times \mbb{R}^m$ with $n,m\geq 1$ and $\max\{\|\mbfs{a}\|, \|\mbfs{d}\|\} = 1$.
After fixing $(\mbfs{a}, \mbfs{d})$, we have consequently fixed $Q_g$ and $H$.
Since the domain and range of all functions $\Gamma$ we consider are unit spheres, we can equivalently define $\Gamma:D^m \to D^n$ to be non-expansive if $\mbfs{\beta}{}^\top\overline{\mbfs{\beta}} \le \Gamma(\mbfs{\beta}){}^\top \Gamma(\overline{\mbfs{\beta}}) $, for all $\mbfs{\beta}, \overline{\mbfs{\beta}} \in D^m$.
Whenever we refer to a set $G$, we refer to a set of the form \eqref{defnG}; we leave the dependence on $\Gamma, \mbfs{a}$, and $\mbfs{d}$ implicit to alleviate notation.

For background on convexity, see~\cite{R1970}.
We use $\cl(\cdot)$, $\inte(\cdot)$, $\relint(\cdot)$, $\conv(\cdot)$, and $\cone(\cdot)$ to denote the closure, interior, the relative interior, the convex hull, and convex conic hull, respectively.
We use a capital Greek letter, e.g., $\Gamma, \Lambda$, or $\Psi$, for a function whose domain is $D^m$.

\section{Background results}\label{sec:Background}

In this section, we collect known background results for completeness.
%

\subsection{Reduction from quadratic-free sets to $Q_h$- and $Q_g$-free sets}\label{sec:Reduction}

In this subsection, we reduce the characterization of maximal quadratic-free sets to maximal $Q_h$- and $Q_g$-free sets.
This discussion also appears in \cite{MS2022}.

Consider a set of the form 
\[
S := \left\{ \mbfs{s} \in \mbb{R}^{d}  :\ \mbfs{s}{}^\top \mbfs{A} \mbfs{s} +  \mbfs{b}{}^\top \mbfs{s} +  c \le0\right\},
\]
where $\mbfs{A} \in \mbb{R}^{d\times d}$ is symmetric, $\mbfs{b} \in \mbb{R}^d$ and $c \in \mbb{R}$.
If $\mbfs{A}$ is positive semi-definite, then $S$ is convex.
In this case, full-dimensional maximal $S$-free sets are half-spaces that define supporting hyperplanes of $S$.
If $\mbfs{A}$ is negative semi-definite, then $S$ is reverse convex, i.e., $\mbb{R}^d \setminus S$ is convex.
In this case, $\cl(\mathbb{R}^d \setminus S)$ is the unique full-dimensional maximal $S$-free set. 

Assume $\mbfs{A}$ is indefinite.
If the quadratic form is homogeneous, i.e., 
\[
S = \left\{ \mbfs{s} \in \mbb{R}^{d}  :\ \mbfs{s}{}^\top \mbfs{A} \mbfs{s}  \le0\right\},
\]
then we can diagonalize $\mbfs{A}$ to map $S$ to a set of the form $\{(\mbfs{x},\mbfs{y},\mbfs{z}) \in \mathbb{R}^{n}\times \mbb{R}^m\times \mbb{R}^{\ell} : (\mbfs{x},\mbfs{y}) \in Q_h\}$.
The $\mbfs{x}$ (respectively, $\mbfs{y}$ and $\mbfs{z}$) variables come from the positive (respectively, negative and zero) eigenvalues of $\mbfs{A}$.
Both $\mbfs{x}$ and $\mbfs{y}$ variables exist because $\mbfs{A}$ is indefinite.
The $\mbfs{z}$ variables are free, so after projecting them out, we deduce that maximal $Q_h$-free sets capture all maximal $S$-free sets.

If the quadratic form defining $S$ is inhomogeneous, we consider a homogenization of $S$: 
\[
S' := \left\{ (\mbfs{s},t) \in \mbb{R}^{d}\times \mbb{R}  :\ \mbfs{s}{}^\top \mbfs{A} \mbfs{s} +  (\mbfs{b}{}^\top \mbfs{s})\cdot t +  c\cdot t^2 \le0,\, t=1\right\}.
\]
There is a one-to-one correspondence between maximal $S'$-free sets contained in the hyperplane $ \{ (\mbfs{s},t) \in \mbb{R}^{d}\times \mbb{R}  :\, t=1\}$ and maximal $S$-free sets.
Then, we can proceed similarly to the homogeneous case. Via a diagonalization, map $S'$ to a set the form
\[
Q_g' := \{(\mbfs{x},\mbfs{y},\mbfs{z}) \in \mathbb{R}^{n}\times \mbb{R}^m\times \mbb{R}^{\ell} : \|\mbfs{x}\| \le \|\mbfs{y}\|,\; \mbfs{a}^\top \mbfs{x} + \mbfs{d}^\top \mbfs{y} + \mbfs{h}^\top \mbfs{z} = -1\},
\]
where $(\mbfs{a}, \mbfs{d}, \mbfs{h}) \in \mathbb{R}^{n}\times \mbb{R}^m\times \mbb{R}^{\ell}$.
Here, the $\mbfs{x}$ (respectively, $\mbfs{y}$ and $\mbfs{z}$) variables come from the positive (respectively, negative and zero) eigenvalues in the quadratic form defining $S'$.
This transformation is linear and one-to-one.
Thus, maximal $Q_g'$-free sets capture all maximal $S$-free sets in this case.

The next step is to remove the $\mbfs{z}$ variables. 
Mu\~noz and Serrano \cite[Remark 3.2]{MS2022} argue that if $\mbfs{h}\neq \mbfs{0}$, then all maximal $Q_g'$-free sets are of the form $C\times \mathbb{R}^\ell$, where $C$ is a maximal $Q_h$-free set.
Thus, we can further reduce to the case where $\mbfs{h}=\mbfs{0}$ in $Q_g'$, i.e., to $Q_g$.

The final step is to ensure $\max\{\|\mbfs{a}\|, \|\mbfs{d}\|\} = 1$.
The set $S$ (and thus $Q_g$) is not empty because $\mbfs{A}$ is indefinite. 
Hence, $\max\{\|\mbfs{a}\|, \|\mbfs{d}\|\} > 0$.
We can achieve $\max\{\|\mbfs{a}\|, \|\mbfs{d}\|\} = 1$ by rescaling the variables.

\subsection{Basic properties of full dimensional maximal $Q_g$-free sets}\label{sec:Structure}

Theorem~\ref{thm:Both} states maximal $Q_g$-free sets have the form $C^G_{\Gamma} \cap H$.
The next result is a simpler variation of Theorem~\ref{thm:Both}.
The result follows from Zorn's Lemma, using the proof strategy of~\cite[Theorem 4.4]{CCDLM2014}.
We use this simpler variation as a stepping stone to prove Theorem~\ref{thm:Both} later. 

\begin{lemma}\label{sliceofCgamma}
If $K\subseteq H$ is a full-dimensional maximal $Q_g$-free set, then there exists a function $\Gamma:D^m\to D^n$ such that $C_{\Gamma}$ is a full-dimensional maximal $Q_h$-free set and $K = C_{\Gamma}\cap H$.
\end{lemma}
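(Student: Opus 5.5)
The plan is to lift $K$ to a full-dimensional $Q_h$-free set in $\mbb{R}^{n+m}$, enlarge it to a maximal $Q_h$-free set with Zorn's Lemma, apply Theorem~\ref{thmHCGamma}, and then use maximality of $K$ to conclude $K = C_\Gamma\cap H$. This follows the strategy of~\cite[Theorem 4.4]{CCDLM2014}.

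\emph{Lifting $K$.} Since $K\subseteq H$ and $\mbfs{0}\notin H$, I will consider the closed convex cone $\widehat K := \cl(\cone(K))$.

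\emph{$\widehat K$ is full-dimensional.} $K$ is $(n+m-1)$-dimensional in a hyperplane avoiding the origin, so $\cone(K)$ has nonempty interior in $\mbb{R}^{n+m}$.

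\emph{$\widehat K$ is $Q_h$-free.} Let $\mbfs{p}\in \inte(\widehat K)$. I first claim that $\mbfs{p}=\lambda \mbfs{k}$ with $\lambda>0$ and $\mbfs{k}\in\relint(K)$. The interior of a cone over a full-dimensional base in an affine hyperplane avoiding $\mbfs{0}$ is the open cone over the relative interior of the base. Hence $\mbfs{a}{}^\top\mbfs{p}+\mbfs{d}{}^\top\mbfs{p}$-type normalization works: $\mbfs{a}{}^\top\mbfs{x}+\mbfs{d}{}^\top\mbfs{y}<0$ on $\inte(\widehat K)$, and rescaling lands in $\relint(K)$. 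Since $Q_h$ is a cone and $\mbfs{k}\notin Q_g = Q_h\cap H$, we get $\mbfs{k}\notin Q_h$, and therefore $\mbfs{p}\notin Q_h$. The closure needs a little care, but $\inte(\cl C)=\inte C$ for convex $C$, so the argument goes through.

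\emph{Enlarging to a maximal set.} Next I apply Zorn's Lemma to the family of $Q_h$-free closed convex sets containing $\widehat K$, ordered by inclusion. For a chain, I take the closure of the union. It is convex, and its interior equals the interior of the union, by the same $\inte(\cl C)=\inte C$ fact applied to the convex union of a chain. Any interior point of a convex union lies in the interior of some member: take a small simplex around the point whose vertices lie in finitely many members, hence in one member. So the chain's upper bound is $Q_h$-free. This yields a maximal $Q_h$-free $C\supseteq \widehat K$, which is full-dimensional. By Theorem~\ref{thmHCGamma}, $C=C_\Gamma$ for some $\Gamma:D^m\to D^n$.

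\emph{Concluding $K=C_\Gamma\cap H$.} Since $K\subseteq \widehat K\cap H\subseteq C_\Gamma\cap H$, it remains to show that $C_\Gamma\cap H$ is $Q_g$-free, after which maximality of $K$ gives equality. The set $C_\Gamma\cap H$ contains $K$, so it is $(n+m-1)$-dimensional, and $H$ meets $\inte(C_\Gamma)$. Therefore $\relint(C_\Gamma\cap H)=\inte(C_\Gamma)\cap H$, which avoids $Q_h$ and hence $Q_g$.

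The main obstacle I expect is the relative-interior bookkeeping: identifying the interior of $\cl\cone(K)$ and the relative interior of $C_\Gamma\cap H$. I will handle both using the standard results in \cite{R1970}: Theorem 6.5, that $\relint$ of an intersection is the intersection of the $\relint$s when these meet, and Corollary 6.8.1, which describes the relative interior of the cone generated by a convex set.
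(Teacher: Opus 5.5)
Your proposal is correct and follows essentially the same route as the paper's proof. Both lift $K$ to its conic hull, show that cone is $Q_h$-free via the correspondence between its interior and $\relint(K)$, extend it by Zorn's Lemma to a maximal $Q_h$-free set $C_\Gamma$ (you rightly cite Theorem~\ref{thmHCGamma} here), and conclude $K=C_\Gamma\cap H$ by maximality of $K$.

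Your extra care tightens points the paper treats briefly: taking $\cl(\cone(K))$, using the closure of the union for arbitrary chains, and verifying $\relint(C_\Gamma\cap H)=\inte(C_\Gamma)\cap H$. One small point: the fact that $H$ meets $\inte(C_\Gamma)$ does not follow from the dimension count alone. It comes from $\relint(K)\subseteq\inte(\widehat K)\subseteq\inte(C_\Gamma)$, which you have already established.
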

\begin{proof}
The affine hull of $K$ does not contain the origin, so $\cone(K) \subseteq \mbb{R}^n \times \mbb{R}^m$ is $(n+m)$-dimensional.
Furthermore, if $(\mbfs{x},\mbfs{y})\in \inte(\cone(K))$, then $\lambda \cdot (\mbfs{x},\mbfs{y}) \in \relint(K)$ for some $\lambda > 0$.
As $K$ is $Q_g$-free, it follows that $\lambda \cdot (\mbfs{x},\mbfs{y}) \not \in Q_g$.
Hence, $\| \mbfs{x}\| > \|\mbfs{y}\|$, implying that $(\mbfs{x},\mbfs{y}) \not \in Q_h$.
Thus, $\cone(K)$ is $Q_h$-free.

Consider the family $\mcf{F}$ of $Q_h$-free sets containing $\cone(K)$, along with the partial-order defined by set containment.
Let us argue that each chain $(P_i)_{i=1}^{\infty}$ in $\mcf{F}$ has an upper bound in $\mcf{F}$, namely, $\cup_{i=1}^{\infty} P_i$.
If $\inte( \cup_{i=1}^{\infty} P_i) \cap Q_h \neq \emptyset$, then $\inte(P_i) \cap Q_h \neq \emptyset$ for some $i$.
The set $\cup_{i=1}^{\infty} P_i$ is convex because it is a nested union of convex sets. 
Hence, $ \cup_{i=1}^{\infty} P_i$ is $Q_h$-free.
By Zorn's Lemma, there is a maximal element in $\mcf{F}$.
By Theorem~\ref{thmHQuad}, this maximal element has the form $C_\Gamma$ for some non-expansive function $\Gamma : D^m \to D^n$.
 Thus, 
 \(
 K  = \cone(K) \cap H \subseteq C_\Gamma \cap H.
 \)
Given that $C_\Gamma $ is $Q_h$-free, the set $C_\Gamma \cap H$ is $Q_g$-free.
As $K$ is a maximal $Q_g$-free set, we conclude $K = C_\Gamma \cap H$.
\end{proof}

Our main theorems consider full-dimensional sets in $H$.
The following is a sufficient condition for $C_\Gamma\cap H$ to be full-dimensional

\begin{lemma}\label{lem:fulldim}
Let $\Gamma:D^m \to D^n$ be such that $C_\Gamma$ is full-dimensional. 
If $\mbfs{a}{}^\top \overline{\mbfs{x}} +\mbfs{d}{}^\top\overline{\mbfs{y}}< 0$ for some $(\overline{\mbfs{x}}, \overline{\mbfs{y}}) \in C_{\Gamma}$, then $C_{\Gamma}\cap H$ is full-dimensional.
\end{lemma}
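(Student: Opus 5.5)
The plan is to exploit that $C_\Gamma$ is a closed convex cone, being an intersection of homogeneous half-spaces, and that it is full-dimensional. Since $C_\Gamma$ has nonempty interior, the point $(\overline{\mbfs{x}},\overline{\mbfs{y}})\in C_\Gamma$ can be replaced by an interior point with the same strict sign property. Fix any $(\mbfs{x}^0,\mbfs{y}^0)\in\inte(C_\Gamma)$ and consider the points
\[
(\mbfs{x}^\epsilon,\mbfs{y}^\epsilon) := (\overline{\mbfs{x}},\overline{\mbfs{y}}) + \epsilon(\mbfs{x}^0,\mbfs{y}^0), \qquad \epsilon>0.
\]
For a convex set, the segment from a point of the set to an interior point lies in the interior except possibly at the first endpoint; here the set is a cone, so the point is even a sum of a cone element and an interior point. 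Hence $(\mbfs{x}^\epsilon,\mbfs{y}^\epsilon)\in\inte(C_\Gamma)$ for every $\epsilon>0$. By continuity of the linear functional $\mbfs{a}{}^\top\mbfs{x}+\mbfs{d}{}^\top\mbfs{y}$, we still have $\mbfs{a}{}^\top\mbfs{x}^\epsilon+\mbfs{d}{}^\top\mbfs{y}^\epsilon<0$ for $\epsilon$ small enough. Call this interior point $\mbfs{p}$ and write $\mbfs{a}{}^\top\mbfs{p}_x+\mbfs{d}{}^\top\mbfs{p}_y=-t$ with $t>0$.

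Next, rescale. Since $C_\Gamma$ is a cone, its interior is invariant under positive scaling, so $\mbfs{q}:=\mbfs{p}/t$ lies in $\inte(C_\Gamma)$. By construction $\mbfs{a}{}^\top\mbfs{q}_x+\mbfs{d}{}^\top\mbfs{q}_y=-1$, so $\mbfs{q}\in H$.

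Then choose a ball $B$ around $\mbfs{q}$ of positive radius contained in $C_\Gamma$. The set $B\cap H$ is a relatively open ball in the affine hyperplane $H$, because $H$ passes through the center $\mbfs{q}$. This ball is $(n+m-1)$-dimensional, since $(\mbfs{a},\mbfs{d})\neq\mbfs{0}$ by the normalization $\max\{\|\mbfs{a}\|,\|\mbfs{d}\|\}=1$, so $H$ is a genuine hyperplane. Since $B\cap H\subseteq C_\Gamma\cap H$, the set $C_\Gamma\cap H$ is full-dimensional in the sense defined in the paper.

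There is no serious obstacle. The only step needing care is the first, moving from an arbitrary point of $C_\Gamma$ with negative functional value to an interior point with the same property. It relies on the full-dimensionality of $C_\Gamma$, the cone structure (or the standard fact that $\cl$-to-interior segments lie in the interior), and the strictness of the inequality, which survives a small perturbation.
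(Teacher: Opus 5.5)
Your proof is correct and follows essentially the paper's route: you perturb the given point toward an interior point of $C_\Gamma$ to obtain an interior point with strictly negative value of $\mbfs{a}^\top\mbfs{x}+\mbfs{d}^\top\mbfs{y}$, and then use positive scaling to land in $H$. The only cosmetic difference is that the paper rescales an entire ball of such points into $H$, whereas you first rescale the center into $H$ and then intersect a ball around it with the hyperplane.
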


\begin{proof}
As $C_{\Gamma}$ is full-dimensional, there exists some $(\widehat{\mbfs{x}}, \widehat{\mbfs{y}}) \in \inte(C_{\Gamma})$.
Given that $(\overline{\mbfs{x}}, \overline{\mbfs{y}}) \in C_{\Gamma}$, the convex combination $(\mbfs{x}_{\varepsilon}, \mbfs{y}_{\varepsilon} ):= (1-\varepsilon) \cdot (\overline{\mbfs{x}}, \overline{\mbfs{y}}) + \varepsilon\cdot(\widehat{\mbfs{x}}, \widehat{\mbfs{y}}) $ is in $\inte(C_{\Gamma})$ for all $\varepsilon \in (0,1]$.
The inequality $\mbfs{a}{}^\top\overline{\mbfs{x}} + \mbfs{d}{}^\top  \overline{\mbfs{y}} < 0$ is strict, so we can choose $\varepsilon \in (0,1)$ small enough such that $\mbfs{a}{}^\top \mbfs{x}_{\varepsilon} + \mbfs{d}{}^\top \mbfs{y}_{\varepsilon} < 0$. 
Hence, there exists an $(n+m)$-dimensional ball $B \subseteq C_{\Gamma}$ (centered at some $(\mbfs{x}_{\varepsilon}, \mbfs{y}_{\varepsilon} )$) such that $\mbfs{a}{}^\top\mbfs{x} + \mbfs{d}{}^\top\mbfs{y}< 0$ for all $(\mbfs{x}, \mbfs{y}) \in B$.
Each vector in $B$ can be scaled by a positive number to be in $H$.
Thus, $C_{\Gamma}\cap H$ is full-dimensional.
\end{proof}

\subsection{Valid inequalities and tilting}\label{sec:Tiliting}

Our main theorems `tilt' valid inequalities for $C_{\Gamma} \cap H$ so that they become valid for $C_{\Gamma}$.
This is important since, even though $C_{\Gamma} \cap H$ can be described using only inequalities that are valid for $C_{\Gamma}$ (plus the hyperplane $H$) it will be important to handle \emph{arbitrary} valid inequalities $C_{\Gamma} \cap H$.
Figure \ref{fig:tilting} illustrates this tilting.
This subsection contains results about tilting valid inequalities.  

\begin{figure}
\centering
\begin{tabular}{c@{\hskip 2 cm}c}
\begin{tikzpicture}[scale = .6,every node/.style={font=\normalsize}]

\begin{scope}
\clip(-2,-2) rectangle (2,2);

\draw[red!70](-3,-3) to (3,3) to (-3,3) to (3, -3) to cycle;
\draw[draw = none, fill = red!10, opacity = .5] (0,0) to (3,3) to (-3,3) to cycle;
\draw[draw = none, fill = red!10, opacity = .5] (0,0) to (3,-3) to (-3,-3) to cycle;

\draw[draw = blue!75, fill = blue!75, opacity = .15] (0,0) to (3,3) to (3,-3) to cycle;

\draw[draw =black, fill = black, opacity = .35, dashed, ultra thick] (-3,1) to (3,1);
\draw[draw =black, opacity = 1, line width = .5 mm] (1,3) to (1,-3);
\end{scope}
\draw[<->,black!30](0,-2) to node[above, pos = 1]{\color{black}$\mbfs{y}$}(0,2);
\draw[<->,black!30](-2,0)  to node[right, pos = 1]{\color{black}$\mbfs{x}$} (2,0);

\end{tikzpicture}
&
\begin{tikzpicture}[scale = .6,every node/.style={font=\normalsize}]

\begin{scope}
\clip(-2,-2) rectangle (2,2);

\draw[red!70](-3,-3) to (3,3) to (-3,3) to (3, -3) to cycle;
\draw[draw = none, fill = red!10, opacity = .5] (0,0) to (3,3) to (-3,3) to cycle;
\draw[draw = none, fill = red!10, opacity = .5] (0,0) to (3,-3) to (-3,-3) to cycle;

\draw[draw = blue!75, fill = blue!75, opacity = .15] (0,0) to (3,3) to (3,-3) to cycle;

\draw[draw =black, fill = black, opacity = .35, dashed, ultra thick] (-3,1) to (3,1);
\draw[draw =black, opacity = 1, line width = .5 mm] (3,3) to (-3,-3);
\end{scope}
\draw[<->,black!30](0,-2) to node[above, pos = 1]{\color{black}$\mbfs{y}$}(0,2);
\draw[<->,black!30](-2,0)  to node[right, pos = 1]{\color{black}$\mbfs{x}$} (2,0);

\end{tikzpicture}\\[.25cm]
(a) & (b)
\end{tabular}
\caption{In this figure, $n = m = 1$. 
The set $Q_h$ is in red, and $C_{\Gamma}$ is in blue for $\Gamma(-1) = \Gamma(1) = 1$.
The hyperplane $H$ for $\mbfs{a} = 1$ and $\mbfs{d} = 0$ is shown as a dashed line. 
In (a), the black inequality is valid for $C_{\Gamma} \cap H$ but not $C_{\Gamma}$.
In (b), the black inequality is tilted such that it is valid for both $C_{\Gamma} \cap H$ and $C_{\Gamma}$.
}\label{fig:tilting}
\end{figure}

\begin{lemma}\label{lem:Wavy2}
Let $\mbfs{f} \in \mbb{R}^d\setminus \{\mbfs{0}\}$, and set $F := \{\mbfs{x} \in \mbb{R}^d:\ \mbfs{f}{}^\top\mbfs{x} = -1\}$.
Let $K \subseteq \mbb{R}^d$ be a closed convex cone such that $K \cap F \neq \emptyset$.
Let $\mbfs{e}{}^\top\mbfs{x} \le \tau$ be valid for $K \cap F$, where $(\mbfs{e}, \tau) \in \mbb{R}^d \times \mbb{R}$, and $\mbfs{e}$ and $\mbfs{f}$ are linearly independent. 
Then, there exists a number $\gamma \leq \tau $ and a vector $\mbfs{g} \in \mbb{R}^d \setminus \{\mbfs{0}\}$ such that $\mbfs{g}{}^\top\mbfs{x}\leq 0$ is valid for $K$ and 
\(
\mbfs{e} =  \mbfs{g}- \gamma \cdot \mbfs{f}.
\)
\end{lemma}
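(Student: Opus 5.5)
The plan is to reduce everything to a two-dimensional picture via the linear map $\mbfs{x} \mapsto (\mbfs{e}{}^\top\mbfs{x}, \mbfs{f}{}^\top\mbfs{x})$ and then separate in $\mbb{R}^2$. Let
\[
A := \{(\mbfs{e}{}^\top\mbfs{x}, \mbfs{f}{}^\top\mbfs{x}) :\ \mbfs{x} \in K\} \subseteq \mbb{R}^2 .
\]
As a linear image of a convex cone, $A$ is a convex cone. The hypotheses translate as follows: $K\cap F\neq\emptyset$ says $A$ contains some point $(u_0,-1)$, and validity of $\mbfs{e}{}^\top\mbfs{x}\le\tau$ on $K\cap F$ says $A\cap\{v=-1\}\subseteq\{u\le\tau\}$. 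The goal is a scalar $\gamma\le\tau$ with $u+\gamma v\le 0$ on $A$. Given such a $\gamma$, set $\mbfs{g}:=\mbfs{e}+\gamma\mbfs{f}$. Then $\mbfs{g}{}^\top\mbfs{x}\le0$ on $K$, we have $\mbfs{e}=\mbfs{g}-\gamma\mbfs{f}$, and $\mbfs{g}\ne\mbfs{0}$ because $\mbfs{e},\mbfs{f}$ are linearly independent.

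The first step is to pass to $\cl(A)$, which is a closed convex cone, and check that it still avoids the open ray $R:=\{(u,-1): u>\tau\}$. Take $p=(u,-1)\in\cl(A)$, approximated by $a_k=(u_k,v_k)\in A$ with $a_k\to p$. Then $v_k<0$ for large $k$, so by the cone property $a_k/(-v_k)\in A\cap\{v=-1\}$. These points converge to $p$ and have first coordinate at most $\tau$, hence $u\le\tau$. This is the one place where the cone structure and nonemptiness of $K\cap F$ are really used, and it sidesteps the fact that $A$ itself need not be closed.

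Next I separate the disjoint convex sets $\cl(A)$ and $R$ in $\mbb{R}^2$. This gives $(\alpha,\beta)\ne\mbfs{0}$ with $\alpha u+\beta v\le c\le \alpha u'-\beta$ for all $(u,v)\in\cl(A)$ and all $u'>\tau$. Since $\cl(A)$ is a cone containing the origin, we may take $c=0$. Letting $u'\to\infty$ forces $\alpha\ge0$, and letting $u'\downarrow\tau$ gives $\beta\le\alpha\tau$.

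The main point to check is that $\alpha>0$. If $\alpha=0$, then $\beta\ne0$ and $\beta\le0$, so $\beta<0$. The inequality $\beta v\le0$ on $A$ then forces $v\ge0$ on $A$, contradicting $(u_0,-1)\in A$. Hence $\alpha>0$, and after normalizing $\alpha=1$ we put $\gamma:=\beta\le\tau$. This yields $u+\gamma v\le0$ on $A$, which completes the argument as described above.
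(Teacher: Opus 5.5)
Your proof is correct, but it takes a different route from the paper's.

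\textbf{The paper's route.} The paper neither projects nor invokes a separation theorem. It defines $\gamma$ explicitly as the tightest right-hand side, $\gamma := \inf\{\chi : \mbfs{e}^\top\mbfs{x}\le\chi \text{ is valid for } K\cap F\}$, and sets $\mbfs{g} := \mbfs{e}+\gamma\mbfs{f}$. It then checks $\mbfs{g}^\top\overline{\mbfs{x}}\le 0$ for $\overline{\mbfs{x}}\in K$ directly, after scaling so that $\mbfs{f}^\top\overline{\mbfs{x}}\in\{-1,0,1\}$:
\begin{itemize}
\item the case $-1$ is the definition of $\gamma$;
\item the case $0$ uses that $\overline{\mbfs{x}}$ is a recession direction of $K\cap F$, so $\mbfs{g}^\top\overline{\mbfs{x}}>0$ would violate validity of $\mbfs{e}^\top\mbfs{x}\le\tau$;
\item the case $1$ adds a near-optimal point $\widehat{\mbfs{x}}\in K\cap F$ to $\overline{\mbfs{x}}$ and reduces to the case $0$.
\end{itemize}

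\textbf{Comparison.} Your reduction to the planar cone $A$ packages the same information more conceptually. It replaces the three-case analysis with a single application of weak separation in $\mathbb{R}^2$. The price is that you must handle the possible non-closedness of $A$, which you do correctly with the rescaling argument. The paper's direct approach needs neither the closure step nor a separation theorem, and it pins down a specific valid $\gamma$, namely the smallest one, $\sup_{\mbfs{x}\in K\cap F}\mbfs{e}^\top\mbfs{x}$. Your argument only yields some unspecified $\gamma$ between that value and $\tau$. Neither argument actually uses that $K$ is closed.

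\textbf{A small inaccuracy in your commentary.} Nonemptiness of $K\cap F$ is not used in the closure step. It enters only when you show $\alpha>0$. Also, the cone property of $\cl(A)$ is used a second time when you set $c=0$.
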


\begin{proof}
Define the number
\[
\gamma:= \inf \left\{\chi \in \mbb{R} :\ \text{$\mbfs{e}{}^\top\mbfs{x}  \le \chi$ is valid for $ K \cap F$}\right\}. 
\]
Note that  $\gamma\le \tau$ and $\gamma$ is finite because $ K \cap F\neq \emptyset$.

Define $\mbfs{g} := \mbfs{e} + \gamma \cdot \mbfs{f}$.
Note that $\mbfs{g} \neq \mbfs{0}$  because $\mbfs{e}$ and $\mbfs{f}$ are linearly independent.
It remains to show that $\mbfs{g}{}^\top\mbfs{x} \leq 0$ is valid for $K$.
Let $\overline{\mbfs{x}} \in K$.
We consider three cases based on the sign of $\mbfs{f}^\top\overline{\mbfs{x}}$.
If we multiply $\overline{\mbfs{x}}$ by a positive number, then the sign of $\mbfs{f}^\top\overline{\mbfs{x}}$ is preserved, as is containment in $K$ because it is a cone.
Thus, we may assume $\mbfs{f}^\top\overline{\mbfs{x}} \in \{\pm 1, 0\}$. 

In case 1, assume $\mbfs{f}{}^\top\overline{\mbfs{x}}  = -1$.
Here, we have $\overline{\mbfs{x}} \in K \cap F$.
Hence, $\mbfs{e}{}^\top\overline{\mbfs{x}} \le \gamma$. 
Thus,
\(
\mbfs{g}{}^\top\overline{\mbfs{x}} = \mbfs{e}{}^\top\overline{\mbfs{x}} + \gamma \cdot \mbfs{f}{}^\top\overline{\mbfs{x}}\le \gamma - \gamma = 0.
\)

In case 2, assume $\mbfs{f}^\top\overline{\mbfs{x}}  = 0$.
Let $\widehat{\mbfs{x}} \in K \cap F$. 
Given that $K$ is a cone, $\widehat{\mbfs{x}} + \lambda \cdot \overline{\mbfs{x}} \in K$ for all $\lambda \ge 0$.
Moreover, because $\mbfs{f}^\top\overline{\mbfs{x}}  = 0$, we have  $\widehat{\mbfs{x}} + \lambda \cdot \overline{\mbfs{x}} \in F$ for each $\lambda \ge 0$.
If $\mbfs{g}{}^\top\overline{\mbfs{x}}  > 0$, then $\mbfs{g}{}^\top (\widehat{\mbfs{x}} + \lambda \cdot \overline{\mbfs{x}}) > \tau - \gamma$ for large enough $\lambda$. However, this implies
\[
\mbfs{e}{}^\top(\widehat{\mbfs{x}} + \lambda \cdot \overline{\mbfs{x}})= (\mbfs{g} - \gamma \mbfs{f} )^\top(\widehat{\mbfs{x}} + \lambda \cdot \overline{\mbfs{x}}) > \tau,
\]
which contradicts that $\mbfs{e}{}^\top\mbfs{x} \le \tau$ is valid for $K \cap F$.
Thus, $\mbfs{g}{}^\top\overline{\mbfs{x}}  \leq 0$.

In case 3, assume $\mbfs{f}^\top\overline{\mbfs{x}}  = 1$.
Assume to the contrary that $\varepsilon := \mbfs{g}{}^\top\overline{\mbfs{x}} > 0$.
By the infimum definition of $\gamma$, we can choose $\widehat{\mbfs{x}}  \in K \cap F$ such that $\mbfs{e}{}^\top\widehat{\mbfs{x}} > \gamma - \varepsilon$.
The point $\overline{\mbfs{x}}+\widehat{\mbfs{x}}$ is in $K$ because $K$ is a convex cone. 
Also, $\mbfs{f}{}^\top(\overline{\mbfs{x}}+\widehat{\mbfs{x}}) = 0$ and 
\[
\mbfs{g}{}^\top(\overline{\mbfs{x}}+\widehat{\mbfs{x}}) = \varepsilon  + (\mbfs{e}{}^\top\widehat{\mbfs{x}} + \gamma \cdot \mbfs{f}{}^\top\widehat{\mbfs{x}}) > \varepsilon + (\gamma- \varepsilon) - \gamma = 0.
\]
However, this means that $\overline{\mbfs{x}}+\widehat{\mbfs{x}} $ contradicts case 2.
\end{proof}

The following result can be found in Rockafellar~\cite{R1970}, albeit slightly rephrased.

 \begin{theorem}[Theorem 17.3 from~\cite{R1970}]\label{Rockafellarpatched}
Let $I^* \subseteq \mathbb{R}^{d}\times \mbb{R}$ be a non-empty compact set of vectors, and set 
\[
B := \left\{\mbfs{x} \in \mbb{R}^d\,:\, \mbfs{x}^*{}^\top \mbfs{x} \leq \mu^* \quad \forall\ (\mbfs{x}^*,\mu^*)\in I^*\right\}.
\]
Suppose $B$ is full-dimensional and $(\mbfs{0}, 0)\not\in I^*$.
Then, for a vector $(\overline{\mbfs{x}},\mu_{\overline{\mbfs{\beta}}})$, where $\overline{\mbfs{x}}\neq \mbfs{0}$, the inequality $\overline{\mbfs{x}}{}^\top \mbfs{x} \leq \mu_{\overline{\mbfs{\beta}}}$ is valid for $B$ if and only if there exist vectors $(\mbfs{x}^*_1, \mu^*_1), \dotsc, (\mbfs{x}^*_d, \mu^*_d)\in I^*$ and numbers $\lambda_1, \dotsc, \lambda_d \geq 0$ such that
\[
\overline{\mbfs{x}} = \sum_{i=1}^d \lambda_i \mbfs{x}^*_i\quad\text{and}\quad \sum_{i=1}^d \lambda_i \mu^*_i \leq \mu_{\overline{\mbfs{\beta}}}.
\]
\end{theorem}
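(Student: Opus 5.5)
The plan is to homogenize: turn the valid inequality into membership of a vector in a closed convex cone in $\mbb{R}^{d}\times\mbb{R}$, then use a conic Carath\'eodory argument on a proper face of that cone to get down to $d$ generators.

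The ``if'' direction is immediate. If $\overline{\mbfs{x}} = \sum_i \lambda_i \mbfs{x}^*_i$ with $\lambda_i\ge 0$, then for every $\mbfs{x}\in B$ we have $\overline{\mbfs{x}}{}^\top\mbfs{x} = \sum_i\lambda_i \mbfs{x}^*_i{}^\top\mbfs{x}\le \sum_i \lambda_i\mu^*_i \le \mu_{\overline{\mbfs{\beta}}}$.

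For the ``only if'' direction, define the cone
\[
K := \cone\bigl(I^*\cup\{(\mbfs{0},1)\}\bigr) \subseteq \mbb{R}^d\times\mbb{R}.
\]
\begin{enumerate}
\item \emph{$K$ is closed.} The generating set $I^*\cup\{(\mbfs{0},1)\}$ is compact and avoids the origin, since $(\mbfs{0},0)\notin I^*$. Its conic hull is therefore closed: normalize a convergent sequence of conic combinations and extract convergent subsequences of the weights and generators. The weights stay bounded because the convex hull of the generating set is compact and misses $\mbfs{0}$.
\item \emph{$(\overline{\mbfs{x}},\mu_{\overline{\mbfs{\beta}}})\in K$.} Suppose not. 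By separation in $\mbb{R}^{d+1}$ there is $(\mbfs{z},s)$ with $\mbfs{x}^*{}^\top\mbfs{z}+\mu^* s\le 0$ for all $(\mbfs{x}^*,\mu^*)\in I^*$, with $s\le 0$ (from the generator $(\mbfs{0},1)$), and with $\overline{\mbfs{x}}{}^\top\mbfs{z}+\mu_{\overline{\mbfs{\beta}}}s>0$.
\begin{itemize}
\item If $s<0$, scale so that $s=-1$. Then $\mbfs{z}\in B$ and $\overline{\mbfs{x}}{}^\top\mbfs{z}>\mu_{\overline{\mbfs{\beta}}}$, contradicting validity.
\item If $s=0$, then $\mbfs{z}$ is a recession direction of $B$ with $\overline{\mbfs{x}}{}^\top\mbfs{z}>0$. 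Moving from any point of the nonempty set $B$ far enough along $\mbfs{z}$ again violates validity.
\end{itemize}
\item \emph{Extract the combination.} From $(\overline{\mbfs{x}},\mu_{\overline{\mbfs{\beta}}})\in K$ we get a conic combination of points of $I^*$ plus a nonnegative multiple of $(\mbfs{0},1)$. Dropping that multiple gives $\overline{\mbfs{x}}=\sum\lambda_i\mbfs{x}^*_i$ with $\sum\lambda_i\mu^*_i\le\mu_{\overline{\mbfs{\beta}}}$.
\end{enumerate}

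The main obstacle is the count. Plain Carath\'eodory in $\mbb{R}^{d+1}$ yields $d+1$ generators, not $d$. To save one, replace $\mu_{\overline{\mbfs{\beta}}}$ by $\mu' := \sup\{\overline{\mbfs{x}}{}^\top\mbfs{x} : \mbfs{x}\in B\}$. This is finite because $\overline{\mbfs{x}}{}^\top\mbfs{x}\le\mu_{\overline{\mbfs{\beta}}}$ holds on $B$, and $\mu'\le\mu_{\overline{\mbfs{\beta}}}$.

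It suffices to represent $(\overline{\mbfs{x}},\mu')$ using $d$ points of $I^*$ alone.
\begin{itemize}
\item For every $\varepsilon>0$ the inequality $\overline{\mbfs{x}}{}^\top\mbfs{x}\le\mu'-\varepsilon$ is invalid, so the argument in step 2 shows $(\overline{\mbfs{x}},\mu'-\varepsilon)\notin K$. Hence $(\overline{\mbfs{x}},\mu')$ lies on the boundary of $K$.
\item Suppose a representation of $(\overline{\mbfs{x}},\mu')$ used $(\mbfs{0},1)$ with a positive coefficient $\theta$. Then $(\overline{\mbfs{x}},\mu'-\theta)\in K$, which is a contradiction. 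So only points of $I^*$ appear.
\item Since $(\overline{\mbfs{x}},\mu')$ is a boundary point of the closed convex cone $K$, some supporting hyperplane through the origin contains it. That hyperplane cuts out a proper face $F$ of $K$ whose linear span has dimension at most $d$, and the generators used in the representation lie in $F$.
\item Conic Carath\'eodory inside this span gives at most $d$ points of $I^*$. Pad with zero coefficients if fewer are needed.
\end{itemize}

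One degenerate case must be excluded: $K$ might be all of $\mbb{R}^{d+1}$, in which case there is no proper face. The half-line $\{(\mbfs{0},t): t<0\}$ is not in $K$: since $B$ is nonempty, separating any $(\mbfs{0},t)$ with $t<0$ as in step 2 works. So $K\neq\mbb{R}^{d+1}$, and in particular the supporting hyperplane exists. The hypothesis $\overline{\mbfs{x}}\neq\mbfs{0}$ makes the representation non-trivial. Full-dimensionality of $B$ ensures $B\neq\emptyset$, which the separation arguments use.
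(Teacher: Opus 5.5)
Your architecture is sound: homogenize into $K=\cone(I^*\cup\{(\mbfs{0},1)\})$, separate, then pass to the tight right-hand side $\mu'$ and a supporting hyperplane through $(\overline{\mbfs{x}},\mu')$ to cut conic Carath\'eodory from $d+1$ to $d$ generators. The paper gives no proof of this statement and only cites Rockafellar, so a self-contained argument like yours is a legitimate alternative. The gap is in step~1, and it is exactly where full-dimensionality of $B$ enters. A compact generating set that avoids the origin need not have a closed conic hull. You assert that $\conv(I^*\cup\{(\mbfs{0},1)\})$ misses $\mbfs{0}$, but this does not follow from $(\mbfs{0},0)\notin I^*$, and you give no proof.

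Your closing remark, that full-dimensionality is used only to get $B\neq\emptyset$, is therefore mistaken: with $B$ merely nonempty the statement is false. For $d=2$, write points of $\mbb{R}^2\times\mbb{R}$ as $(x^*_1,x^*_2;\mu^*)$ and let $I^*=\{(-1,0;0)\}\cup\{(1,\sin\theta;1+\cos\theta):\theta\in[0,2\pi]\}$. The element with $\theta=\pi$ together with $(-1,0;0)$ forces $x_1=0$. Then $x_2\sin\theta\le 1+\cos\theta$ for $\theta\to\pi^{\pm}$ forces $x_2=0$, so $B=\{\mbfs{0}\}$. The inequality $x_2\le 0$ is valid for $B$. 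Yet every element of $I^*$ has $\mu^*\ge 0$, with equality only at $(\pm1,0;0)$, so $(0,1;0)$ admits no representation. Here $K=\{\mu^*>0\}\cup(\mbb{R}\times\{0\}\times\{0\})$ is not closed and $(0,1;0)\in\cl(K)\setminus K$. This is precisely where your separation in step~2 fails.

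The repair takes a few lines. Suppose $\sum_i\lambda_i(\mbfs{x}^*_i,\mu^*_i)+\theta(\mbfs{0},1)=(\mbfs{0},0)$ is a convex combination with $(\mbfs{x}^*_i,\mu^*_i)\in I^*$. For any $\mbfs{x}\in B$ we get $0=\sum_i\lambda_i\mbfs{x}^*_i{}^\top\mbfs{x}\le\sum_i\lambda_i\mu^*_i=-\theta\le 0$. Hence $\theta=0$, and $\mbfs{x}^*_i{}^\top\mbfs{x}=\mu^*_i$ on all of $B$ for every $i$ with $\lambda_i>0$; such an $i$ exists because the weights sum to one. An affine function vanishing on a full-dimensional set vanishes identically, so $(\mbfs{x}^*_i,\mu^*_i)=(\mbfs{0},0)\in I^*$, a contradiction. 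With $\mbfs{0}\notin\conv(I^*\cup\{(\mbfs{0},1)\})$ established, your normalization argument gives closedness of $K$, and the rest of your proof goes through.

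One minor slip: in the counting step, $(\overline{\mbfs{x}},\mu'-\varepsilon)\notin K$ follows from the easy direction (every element of $K$ yields a valid inequality for $B$), not from the argument of step~2.
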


We will apply Theorem~\ref{Rockafellarpatched} to valid inequalities for $C_{\Gamma}$. 
The following lemma will be useful for this.

\begin{lemma}\label{lemma:lambdabounded}
Let $\Gamma:D^m \to D^n$ be such that $C_\Gamma$ is full-dimensional.
There exists a number $\tau = \tau(\Gamma) > 0$ such that the following holds:
For each closed set $J \subseteq D^m$ and each vector $(\mbfs{g}, \mbfs{b}) \in \mbb{R}^n\times \mbb{R}^m$ such that $\|(\mbfs{g}, \mbfs{b})\|  = \sqrt{2}$ and $-\mbfs{g}{}^\top \mbfs{x}+\mbfs{b}{}^\top\mbfs{y} \le 0$ is valid for $C^J_{\Gamma}$, we can write 
\[
(-\mbfs{g}, \mbfs{b}) = \sum_{i=1}^{n+m} \lambda_i \cdot (-\Gamma(\mbfs{\beta}_i), \mbfs{\beta}_i),
\]
where, for each $i \in \{1, \dotsc, n+m\}$, we have $\mbfs{\beta}_i \in D^m$ and $0 \le \lambda_i \le \tau$.
\end{lemma}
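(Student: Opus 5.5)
The proof has two parts. First we show that a conic representation of $(-\mbfs{g},\mbfs{b})$ exists, using Theorem~\ref{Rockafellarpatched}. Then we bound its coefficients uniformly, using an interior point of $C_\Gamma$. The bound is the easy part and comes first in the argument.

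\textbf{Choosing $\tau$.} Since $C_\Gamma$ is full-dimensional, fix a point $(\mbfs{x}_0,\mbfs{y}_0)$ and a radius $r>0$ such that the ball of radius $r$ around $(\mbfs{x}_0,\mbfs{y}_0)$ lies in $C_\Gamma$. For every $\mbfs{\beta}\in D^m$, move from $(\mbfs{x}_0,\mbfs{y}_0)$ a distance $r$ in the direction of the normal vector $(-\Gamma(\mbfs{\beta}),\mbfs{\beta})$, which has norm $\sqrt2$. This point stays in $C_\Gamma$, so validity of the $\mbfs{\beta}$-inequality gives
\[
-\Gamma(\mbfs{\beta}){}^\top\mbfs{x}_0+\mbfs{\beta}{}^\top\mbfs{y}_0\le -\sqrt2\, r .
\]
Now suppose $(-\mbfs{g},\mbfs{b})=\sum_i\lambda_i(-\Gamma(\mbfs{\beta}_i),\mbfs{\beta}_i)$ with all $\lambda_i\ge 0$. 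Evaluating both sides at $(\mbfs{x}_0,\mbfs{y}_0)$ and applying Cauchy--Schwarz with $\|(\mbfs{g},\mbfs{b})\|=\sqrt2$ gives
\[
-\sqrt2\,\|(\mbfs{x}_0,\mbfs{y}_0)\|\le -\mbfs{g}{}^\top\mbfs{x}_0+\mbfs{b}{}^\top\mbfs{y}_0\le -\sqrt2\, r\sum_i\lambda_i .
\]
Hence $\sum_i\lambda_i\le \|(\mbfs{x}_0,\mbfs{y}_0)\|/r$. So $\tau:=\|(\mbfs{x}_0,\mbfs{y}_0)\|/r$ works, and it depends only on $\Gamma$, not on $J$ or on $(\mbfs{g},\mbfs{b})$. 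This reduces the lemma to showing that \emph{some} nonnegative representation with at most $n+m$ terms exists.

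\textbf{Existence via Theorem~\ref{Rockafellarpatched}.} We apply it with $d=n+m$, $B=C^J_\Gamma$ and $I^*=\cl\{((-\Gamma(\mbfs{\beta}),\mbfs{\beta}),0):\mbfs{\beta}\in J\}$. The hypotheses hold for the following reasons.
\begin{itemize}
\item $I^*$ is compact because it is the closure of a subset of $D^n\times D^m\times\{0\}$.
\item $(\mbfs{0},0)\notin I^*$ because every element has $\|\mbfs{\beta}\|=1$.
\item Adding the limit inequalities does not change the set: each one is a limit of valid inequalities, hence valid for $C^J_\Gamma$. So $B=C^J_\Gamma$.
\item $B$ is full-dimensional because $C_\Gamma\subseteq C^J_\Gamma$.
\end{itemize}
The theorem then yields $(-\mbfs{g},\mbfs{b})=\sum_{i=1}^{n+m}\lambda_i(-\mbfs{\gamma}_i,\mbfs{\beta}_i)$ with $\lambda_i\ge0$, where each $(-\mbfs{\gamma}_i,\mbfs{\beta}_i)$ lies in the closure of the graph-type set. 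The right-hand side of the resulting inequality is $0$, which is automatic.

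\textbf{Main obstacle.} The statement requires the generators to be exactly $(-\Gamma(\mbfs{\beta}_i),\mbfs{\beta}_i)$, but the closure may contain pairs $(\mbfs{\gamma},\mbfs{\beta})$ with $\mbfs{\gamma}\neq\Gamma(\mbfs{\beta})$, since $\Gamma$ need not be continuous. The first thing to try is the following.
\begin{itemize}
\item When $C_\Gamma$ is maximal, Theorem~\ref{thmHQuad} makes $\Gamma$ non-expansive, hence continuous. The graph of $\Gamma$ over the closed set $J$ is then already compact, and no closure is needed.
\item In general, any limit pair $(-\mbfs{\gamma},\mbfs{\beta})$ is still a valid inequality for $C_\Gamma$ with norm $\sqrt2$. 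We would use the interior-point bound above together with a limiting argument to replace it. Concretely, approximate the representation by representations over the generating set itself, all of whose coefficients are bounded by $\tau$. Then pass to a convergent subsequence of $(\lambda_i,\mbfs{\beta}_i)$, using compactness of $[0,\tau]^{n+m}\times (D^m)^{n+m}$.
\end{itemize}
Making this last limiting step produce exact values $\Gamma(\mbfs{\beta}_i)$, rather than limits of them, is where we expect the real work. If it fails, we would fall back on the continuity of $\Gamma$ in the cases where the lemma is actually used.
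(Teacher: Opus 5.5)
Your argument is essentially the paper's. Theorem~\ref{Rockafellarpatched} gives a conic representation with $n+m$ generators. Pairing that representation with a vector that strictly separates the points $(\Gamma(\mbfs{\beta}),\mbfs{\beta})$ from the origin then bounds $\sum_i\lambda_i$.

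The only difference is where the separating vector comes from. The paper strictly separates $(\mbfs{0},\mbfs{0})$ from $\conv(\{(\Gamma(\mbfs{\beta}),\mbfs{\beta}):\mbfs{\beta}\in D^m\})$, using Theorem~\ref{thmHQuad} to get that the origin is not in this set and continuity of $\Gamma$ to get closedness. Your choice $(\mbfs{x}_0,-\mbfs{y}_0)$, built from an interior point, gives an explicit $\tau=\|(\mbfs{x}_0,\mbfs{y}_0)\|/r$ from full-dimensionality alone.

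The obstacle you flag is real, and the paper gets around it exactly as your fallback does. Its proof invokes Theorem~\ref{thmHQuad}, so it tacitly assumes $C_\Gamma$ is maximal $Q_h$-free. That holds wherever the lemma is applied (Lemma~\ref{lemma:exposing_common} and Section~\ref{sec:Maximality_hardcase}). Then $\Gamma$ is non-expansive, and $\{(-\Gamma(\mbfs{\beta}),\mbfs{\beta}):\mbfs{\beta}\in J\}$ is already compact.

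Do not look for the general limiting argument: under the bare hypothesis that $C_\Gamma$ is full-dimensional, the statement is false. Here is a counterexample.
\begin{itemize}
\item Take $n=m=2$ and fix $\mbfs{\beta}_0\in D^2$. Set $\Gamma(\mbfs{\beta}_0):=\mbfs{e}_2$ and $\Gamma(\mbfs{\beta}):=\mbfs{e}_1$ for $\mbfs{\beta}\neq\mbfs{\beta}_0$, with $\mbfs{e}_1,\mbfs{e}_2$ the standard unit vectors.
\item Then $C_\Gamma=\{(\mbfs{x},\mbfs{y}): x_1\ge\|\mbfs{y}\|,\ x_2\ge\mbfs{\beta}_0^\top\mbfs{y}\}$ is full-dimensional.
\item With $J=D^2$, the inequality $-x_1+\mbfs{\beta}_0^\top\mbfs{y}\le 0$ is valid for $C_\Gamma$.
\item Suppose $(-\mbfs{e}_1,\mbfs{\beta}_0)=\sum_i\lambda_i(-\Gamma(\mbfs{\beta}_i),\mbfs{\beta}_i)$ with $\lambda_i\ge 0$. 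The second $\mbfs{x}$-coordinate forces $\lambda_i=0$ whenever $\mbfs{\beta}_i=\mbfs{\beta}_0$. The first $\mbfs{x}$-coordinate then gives $\sum_i\lambda_i=1$.
\item This is impossible. It would give $1=\mbfs{\beta}_0^\top\mbfs{\beta}_0=\sum_i\lambda_i\,\mbfs{\beta}_0^\top\mbfs{\beta}_i$, while $\mbfs{\beta}_0^\top\mbfs{\beta}_i<1$ for every remaining $i$.
\end{itemize}

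Your limiting procedure would produce exactly such a spurious generator $(-\mbfs{e}_1,\mbfs{\beta}_0)$, which is not on the graph of $\Gamma$. So continuity of $\Gamma$, which maximality of $C_\Gamma$ guarantees, must be added as a hypothesis rather than worked around. With it, your proof is complete.
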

\begin{proof}
By Theorem~\ref{thmHQuad}, we have $(\mbfs{0}, \mbfs{0})\not\in \conv(\{(\Gamma(\mbfs{\beta}),\mbfs{\beta}) \,:\ \mbfs{\beta}\in D^m\})$.
The set $\conv(\{(\Gamma(\mbfs{\beta}),\mbfs{\beta}) \,:\ \mbfs{\beta}\in D^m\})$ is closed because $D^m$ is compact and $\Gamma$ is continuous (because it is non-expansive).
Thus, we can separate $(\mbfs{0}, \mbfs{0})$ strictly, i.e., there exists $(\mbfs{f},\mbfs{h})\in \mathbb{R}^{n}\times \mathbb{R}^m$ and $w>0$ such that
\begin{equation}\label{separation}
\mbfs{f}{}^\top \Gamma(\mbfs{\beta}) + \mbfs{h}{}^\top \mbfs{\beta} \geq w \qquad \forall~ \mbfs{\beta}\in D^m.
\end{equation}
Set $\tau := \sqrt{2} \cdot \|(\mbfs{f},\mbfs{h})\|/w$. Note that $\tau$ only depends on $\Gamma$.

Let $J \subseteq D^m$ be closed, and let $(\mbfs{g}, \mbfs{b}) \in \mbb{R}^n\times \mbb{R}^m$ satisfy $\|(\mbfs{g}, \mbfs{b})\|  = \sqrt{2}$ and $-\mbfs{g}{}^\top \mbfs{x}+\mbfs{b}{}^\top\mbfs{y} \le 0$ is valid for $C^J_{\Gamma}$.
The set $J$ is compact and does not contain $\mbfs{0}$.
Thus, by Theorem~\ref{Rockafellarpatched}, there exist numbers $\lambda_1, \dotsc, \lambda_{n+m} \ge 0$ and vectors $\mbfs{\beta}_1, \dotsc, \mbfs{\beta}_{n+m} \in J$ such that
\[
(-\mbfs{g}, \mbfs{b}) = \sum_{i=1}^{n+m} \lambda_i \cdot (-\Gamma(\mbfs{\beta}_i), \mbfs{\beta}_i).
\]

For each $i \in \{1, \dotsc, n+m\}$, multiply Inequality~\eqref{separation} associated with $\mbfs{\beta}_{i}$ by $\lambda_{i} $.
Adding the result yields
\[
\mbfs{f}{}^\top\mbfs{g}+\mbfs{h}{}^\top\mbfs{b} = \sum_{i=1}^{n+m} \lambda_{i} \mbfs{f}^\top \Gamma(\mbfs{\beta}_{i}) + \sum_{i=1}^{n+m} \lambda_{i} \mbfs{h}^\top\mbfs{\beta}_{i} \geq w \cdot \sum_{i=1}^{n+m} \lambda_{i}.
\]
Using $w >0$ and $\|(-\mbfs{g}, \mbfs{b})\|  = \sqrt{2}$, we see that $\sum_{i=1}^{n+m} \lambda_{i} \leq \tau$.
Thus, each $\lambda_i$ satisfies $0 \le \lambda_i \le \tau$.
\end{proof}

\section{A proof of Theorem~\ref{thm:Both}}\label{sec:Both}

We prove Theorem~\ref{thm:Both} using two cases: $\|\mbfs{a}\| \le \|\mbfs{d}\|\ = 1$ and $\|\mbfs{d}\| < \|\mbfs{a}\|\ = 1$.
These results are found in Lemmata \ref{lemma:necessary_easy} and \ref{lemma:hardcase_necessary}, respectively.
In the two cases, we actually prove a stronger version of Theorem~\ref{thm:Both} that allows us to prove the necessary condition for maximality in Theorems~\ref{thm:easycase} and~\ref{thm:hardcase}.
%

\subsection{A proof of Theorem~\ref{thm:Both} when \texorpdfstring{$\|\mbfs{a}\| \le \|\mbfs{d}\|\ = 1$}{}.}\label{sec:EasyNecessary}

We begin with the case when $\|\mbfs{a}\| \le \|\mbfs{d}\|\ = 1$.
Our proof in this case uses the following subset of $G$:
\begin{equation}\label{defnGcirc}
G^{\circ}: =\left\{\mbfs{\beta}  \in D^m\,:\, \mbfs{a}{}^\top \Gamma(\mbfs{\beta}) + \mbfs{d}{}^\top \mbfs{\beta} < 0\right\}.
\end{equation}
Perhaps counterintuitively, $\cl(G^\circ)$ might not equal $G$; see
Figure \ref{example:easycase}.

\begin{figure}\centering

\begin{tabular}{@{\hskip 0 cm}c@{\hskip .1cm}c}
\begin{tikzpicture}[scale = .55,every node/.style={font=\small}]

\draw[black!50](0,0) to (0,{pi});
\draw[black!50](0,0) to ({2*pi},0);
\draw[blue!50, line width = .5mm](0,0) to ({pi/2},{pi}) to ({pi},0) to ({3*pi/2},{pi}) to ({2*pi},0);

\draw[black!50](0, 0) to node[pos = 1, below]{$\mbfs{\beta}_{1}$} (0,-.1) ; 
\draw[black!50]({2*pi}, 0) to node[pos = 1, below]{$\mbfs{\beta}_{1}$} ({2*pi},-.1) ; 
\draw[black!50]({pi/2}, 0) to node[pos = 1, below]{$\mbfs{\beta}_{2}$} ({pi/2},-.1);
\draw[black!50]({pi}, 0) to node[pos = 1, below]{$\mbfs{\beta}_{3}$}({pi},-.1);
\draw[black!50]({3*pi/2}, 0) to node[pos = 1, below]{$\mbfs{\beta}_{4}$} ({3*pi/2},-.1);

\draw[black!50](0,0) to node[pos = 1, left]{$\Gamma(\mbfs{\beta}_{1})$}(-.1,0);
\draw[black!50](0, {pi}) to node[pos = 1, left]{$\Gamma(\mbfs{\beta}_{2})$} (-.1,{pi});
\end{tikzpicture}
&
\begin{tikzpicture}[scale = .55,every node/.style={font=\small}]

\draw[black!50](0,{-6/5}) to (0,{8/5});
\draw[black!50](0,0) to ({2*pi},0);

\begin{scope}[domain = 0:{pi/2}] 
\draw[blue!50, line width = .5mm] plot(\x, 0);
\end{scope}

\begin{scope}[domain = {pi/2}:{pi}] 
\draw[blue!50, line width = .5mm] plot(\x, {((2/pi)*\x-1)/sqrt(((2/pi)*\x-1)^2+(2 - (2/pi)*\x)^2)*(-6/5)});
\end{scope}

\begin{scope}[domain = {pi}:{3*pi/2}] 
\draw[blue!50, line width = .5mm] plot(\x, {(3 - (2/pi) * \x )/sqrt( (3 - (2/pi) * \x )^2+ ((2/pi) * \x - 2)^2 ) * (-6/5)+((2/pi) * \x - 2)/sqrt( (3 - (2/pi) * \x )^2+ ((2/pi) * \x - 2)^2 ) * (8/5)});
\end{scope}

\begin{scope}[domain = {3*pi/2}:{2*pi}] 
\draw[blue!50, line width = .5mm] plot(\x, {(4 - \x*2/pi)/sqrt((4 - \x*2/pi)^2+ (\x*(2/pi)-3)^2 ) * (8/5)});
\end{scope}

\draw[black!50](0, 0) to node[pos = 1, below right]{$\mbfs{\beta}_{1}$} (0,-.1) ; 
\draw[black!50]({2*pi}, 0) to node[pos = 1, below]{$\mbfs{\beta}_{1}$} ({2*pi},-.1) ; 
\draw[black!50]({pi/2}, 0) to node[pos = 1, below]{$\mbfs{\beta}_{2}$} ({pi/2},-.1);
\draw[black!50]({pi}, 0) to node[pos = 1, below]{$\mbfs{\beta}_{3}$}({pi},-.1);
\draw[black!50]({3.78509}, 0) to node[pos = 1, below]{$\mbfs{\beta}^{*}$}({3.78509},-.75);
\draw[black!50]({3*pi/2}, 0) to node[pos = 1, below]{$\mbfs{\beta}_{4}$} ({3*pi/2},-.1);

\draw[black!50](0,8/5) to node[pos = 1, left]{$\phantom{-}\sfrac{8}{5}$}(-.1,8/5);
\draw[black!50](0, -6/5) to node[pos = 1, left]{$-\sfrac{6}{5}$} (-.1,-6/5);

\end{tikzpicture}
\\[.1cm]
\begin{tabular}{r@{\hskip .5 cm}l}
(a) & A polar plot of $\Gamma(\mbfs{\beta}) := |\mbfs{\beta}|$.
\end{tabular}
&
\begin{tabular}{r@{\hskip .1 cm}l}
 (b) & A polar plot of $\mbfs{a}{}^\top\Gamma(\mbfs{\beta})+\mbfs{a}{}^\top\mbfs{\beta}$.
\end{tabular}\\[.45cm]
\multicolumn{2}{c}{
\def\startx{2} 
\def\endx{.8}
\def\starty{-1}
\def\endy{4}
\def\startz{-2.5}
\def\endz{1}
\def\dip{-1.1}
\pgfmathsetmacro{\ronebound}{\endy}
\pgfmathsetmacro{\rtwobd}{(-5/6 + \startx)/4}
\pgfmathsetmacro{\rthreebd}{(-5/6 + \startx)/3}

\begin{tikzpicture}[scale = 4,every node/.style={font=\small},
declare function={
PX(\x,\y,\z) = -.16*\x + .1*\y+ 0*\z;
PY(\x,\y,\z) = -.09*\x -.01*\y+ .15*\z;
f(\x,\y) = (1/25)*(9*\x-4*sqrt(16*\x*\x+6*\x*(4*\y+5)+9*\y*\y-40*\y-25)-12*\y - 15);
g(\x,\y) = (1/25)*(9*\x+4*sqrt(16*\x*\x+6*\x*(4*\y+5)+9*\y*\y-40*\y-25)-12*\y - 15);
h(\x,\z) = (-7*\x*\x-18*\x*\z-30*\x+25*\z*\z+30*\z+25)/(8*(3*\x-3*\z-5));
c(\y) = (1/16)*(-12*\y +5*sqrt(5)*sqrt(8*\y+5)-15);
}
]

\draw[draw = none, thick] ({PX(\startx,\starty,\startz)}, {PY(\startx,\starty,\startz)}) to ({PX(\startx,\starty,\endz)}, {PY(\startx,\starty,\endz)}) to ({PX(\endx,\starty,\endz)}, {PY(\endx,\starty,\endz)}) to ({PX(\endx,\endy,\endz)}, {PY(\endx,\endy,\endz)}) to ({PX(\endx,\endy,\startz)}, {PY(\endx,\endy,\startz))}) to ({PX(\startx,\endy,\startz)}, {PY(\startx,\endy,\startz)}) to cycle;

\draw[draw = none, thick] ({PX(\startx,\starty,\startz)}, {PY(\startx,\starty,\startz)}) to ({PX(\startx,\starty,\endz)}, {PY(\startx,\starty,\endz)}) to ({PX(\endx,\starty,\endz)}, {PY(\endx,\starty,\endz)}) to ({PX(\endx,\endy,\endz)}, {PY(\endx,\endy,\endz)}) to ({PX(\startx,\endy,\endz)}, {PY(\startx,\endy,\endz))}) to ({PX(\startx,\endy,\startz)}, {PY(\startx,\endy,\startz)}) to cycle;

\draw[black!15] ({PX(\startx,\starty,\startz)}, {PY(\startx,\starty,\startz)}) to ({PX(\startx,\starty,\endz)}, {PY(\startx,\starty,\endz)}) to ({PX(\startx,\endy,\endz)}, {PY(\startx,\endy,\endz)}) to ({PX(\startx,\endy,\startz)}, {PY(\startx,\endy,\startz)}) to node[pos = .5, below]{\color{black!50}$x_2$}cycle;
\draw[black!15] ({PX(\endx,\starty,\startz)}, {PY(\endx,\starty,\startz)}) to ({PX(\endx,\starty,\endz)}, {PY(\endx,\starty,\endz)}) to ({PX(\endx,\endy,\endz)}, {PY(\endx,\endy,\endz)}) to ({PX(\endx,\endy,\startz)}, {PY(\endx,\endy,\startz)}) to cycle;
\draw[black!15] ({PX(\startx,\starty,\startz)}, {PY(\startx,\starty,\startz)}) to node[pos = .5, left]{\color{black!50}$y_1$} ({PX(\startx,\starty,\endz)}, {PY(\startx,\starty,\endz)}) to ({PX(\endx,\starty,\endz)}, {PY(\endx,\starty,\endz)}) to ({PX(\endx,\starty,\startz)}, {PY(\endx,\starty,\startz)}) to cycle;
\draw[black!15] ({PX(\startx,\endy,\startz)}, {PY(\startx,\endy,\startz)}) to ({PX(\startx,\endy,\endz)}, {PY(\startx,\endy,\endz)}) to ({PX(\endx,\endy,\endz)}, {PY(\endx,\endy,\endz)}) to ({PX(\endx,\endy,\startz)}, {PY(\endx,\endy,\startz)}) to  node[pos = .5, below]{\color{black!50}$x_1$}  cycle;

\begin{scope}
\clip  ({PX(\startx,\starty,\startz)}, {PY(\startx,\starty,\startz)}) to ({PX(\startx,\starty,\endz)}, {PY(\startx,\starty,\endz)}) to ({PX(\endx,\starty,\endz)}, {PY(\endx,\starty,\endz)}) to ({PX(\endx,\endy,\endz)}, {PY(\endx,\endy,\endz)}) to ({PX(\endx,\endy,\startz)}, {PY(\endx,\endy,\startz))}) to ({PX(\startx,\endy,\startz)}, {PY(\startx,\endy,\startz)}) to cycle;

\draw[green!50!black!50, line width = .2mm, left color = green!50!black!30, right color = green!50!black!50, opacity = .35]
	plot[domain = \startx:\endx, variable = \x] ({PX(\x, \endy, g(\x, \endy))},{PY(\x, \endy, g(\x, \endy))})
	to ({PX(\endx, \endy, \endz)},{PY(\endx, \endy, \endz)})
	to ({PX(\startx, \endy, \endz)},{PY(\startx, \endy, \endz)})
	to cycle;

\draw[green!50!black!50, line width = .2mm,  left color = green!50!black!20, right color = green!50!black!30, opacity = .35]
	({PX(\startx, \endy, \endz)},{PY(\startx, \endy, \endz)})
	to ({PX(\startx, \endy, g(\startx, \endy))},{PY(\startx, \endy, g(\startx, \endy))})
	to ({PX(\startx, 2.3, \endz)},{PY(\startx, 2.3, \endz)})
	to cycle;

\draw[green!50!black!50, line width = .2mm]
	plot[domain = \startx:\endx, variable = \x] ({PX(\x, \endy, g(\x, \endy))},{PY(\x, \endy, g(\x, \endy))})
	to ({PX(\endx, \endy, \endz)},{PY(\endx, \endy, \endz)})
	to ({PX(\startx, \endy, \endz)},{PY(\startx, \endy, \endz)})
	to cycle;
\draw[green!50!black!50, line width = .2mm]
	({PX(\startx, \endy, \endz)},{PY(\startx, \endy, \endz)})
	to ({PX(\startx, \endy, g(\startx, \endy))},{PY(\startx, \endy, g(\startx, \endy))})
	to ({PX(\startx, 2.3, \endz)},{PY(\startx, 2.3, \endz)})
	to cycle;

\draw[draw = none, line width = .2mm, bottom color = green!50!black!20, top color = green!50!black!60, opacity = .35]
	({PX(\endx, h(\endx,\startz), \startz)},{PY(\endx, h(\endx,\startz), \startz)})
	plot[domain = \endx:\startx, variable = \x] ({PX(\x, h(\x,\startz), \startz)},{PY(\x, h(\x,\startz), \startz)}) --
	plot[domain = 1.59191:\starty, variable = \y] ({PX(\startx, \y, f(\startx,\y))},{PY(\startx, \y, f(\startx,\y))})
	to ({PX(.6, \starty, f(.6,\starty))},{PY(.6, \starty, f(.6,\starty))}) --
	plot[domain = -.828:\endz, variable = \z] ({PX(.6, h(.6,\z), \z)},{PY(.6, h(.6,\z), \z)}) to
	({PX(\endx, \endy, \endz)},{PY(\endx, \endy, \endz)})
	to ({PX(\endx, \endy, -.892)},{PY(\endx, \endy, -.892)}) --
	plot[domain = -.892:\startz, variable = \z] ({PX(\endx, h(\endx,\z), \z)},{PY(\endx, h(\endx,\z), \z)});

\draw[draw = green!50!black!80, line width = .2mm]
	({PX(\endx, h(\endx,\startz), \startz)},{PY(\endx, h(\endx,\startz), \startz)})
	plot[domain = \endx:\startx, variable = \x] ({PX(\x, h(\x,\startz), \startz)},{PY(\x, h(\x,\startz), \startz)}) --
	plot[domain = 1.59191:\starty, variable = \y] ({PX(\startx, \y, f(\startx,\y))},{PY(\startx, \y, f(\startx,\y))})
	to ({PX(.6, \starty, f(.6,\starty))},{PY(.6, \starty, f(.6,\starty))})--
	plot[domain = -.828:0, variable = \z] ({PX(.6, h(.6,\z), \z)},{PY(.6, h(.6,\z), \z)})
	;
\draw[draw = green!50!black!80, line width = .2mm, dashed]
	plot[domain = 0:\endz, variable = \z] ({PX(.6, h(.6,\z), \z)},{PY(.6, h(.6,\z), \z)});
	
\draw[draw = green!50!black!80, line width = .2mm]	
	plot[domain = -.892:\startz, variable = \z] ({PX(\endx, h(\endx,\z), \z)},{PY(\endx, h(\endx,\z), \z)});

\begin{scope}
\clip ({PX(\startx,\starty,\endz)}, {PY(\startx,\starty,\endz)}) to ({PX(\endx,\starty,\endz)}, {PY(\endx,\starty,\endz)}) to ({PX(\endx,\endy,\endz)}, {PY(\endx,\endy,\endz)}) to ({PX(\startx,\endy,\endz)}, {PY(\startx,\endy,\endz))}) to cycle;

\draw[green!50!black!50, line width = .2mm, left color = green!50!black!20, right color = green!50!black!30, opacity = .75]
	plot[domain = .6:\startx, variable = \x] ({PX(\x, h(\x,.85), .85)},{PY(\x, h(\x,.85), .85)})
	to ({PX(.6, h(2.5,.85), .85)},{PY(.6, h(2.5,.85), .85)})
	to ({PX(.6, h(2.5,.85), .85)},{PY(.6, h(2.5,.85), .85)})
	to cycle;
\draw[green!50!black!50, line width = .2mm]
	plot[domain = .6:\startx, variable = \x] ({PX(\x, h(\x,.85), .85)},{PY(\x, h(\x,.85), .85)})
	to ({PX(.6, h(2.5,.85), .85)},{PY(.6, h(2.5,.85), .85)})
	to ({PX(.6, h(2.5,.85), .85)},{PY(.6, h(2.5,.85), .85)})
	to cycle;
\end{scope}

\end{scope}

%
\draw[blue!75, line width = .35mm]({PX(5/6,0,-5/6)}, {PY(5/6,0,-5/6)}) to ({PX(5/6,\endy,-5/6)}, {PY(5/6,\endy,-5/6)});
\draw[blue!75, line width = .35mm]({PX(5/6,0,-5/6)}, {PY(5/6,0,-5/6)}) to ({PX(5/6+1*\rtwobd*4,0+1*\rtwobd*3,-5/6-1*\rtwobd*4)}, {PY(5/6+1*\rtwobd*4,0+1*\rtwobd*3,-5/6-1*\rtwobd*4)});

\begin{scope}
\clip({PX(\startx,\starty,\startz)}, {PY(\startx,\starty,\startz)}) to ({PX(\startx,\starty,\endz)}, {PY(\startx,\starty,\endz)}) to ({PX(\endx,\starty,\endz)}, {PY(\endx,\starty,\endz)}) to ({PX(\endx,\endy,\endz)}, {PY(\endx,\endy,\endz)}) to ({PX(\endx,\endy,\startz)}, {PY(\endx,\endy,\startz))}) to ({PX(\startx,\endy,\startz)}, {PY(\startx,\endy,\startz)}) to cycle;
\draw[blue!75, line width = .35mm]({PX(5/6,0,-5/6)}, {PY(5/6,0,-5/6)}) to ({PX(5/6+1*\rthreebd*3,0-1*\rthreebd*4,-5/6+1*\rthreebd*3)}, {PY(5/6+1*\rthreebd*3,0-1*\rthreebd*4,-5/6+1*\rthreebd*3)});
\end{scope}

\begin{scope}
\clip ({PX(\startx,\starty,\startz)}, {PY(\startx,\starty,\startz)}) to ({PX(\startx,\starty,\endz)}, {PY(\startx,\starty,\endz)}) to ({PX(\endx,\starty,\endz)}, {PY(\endx,\starty,\endz)}) to ({PX(\endx,\endy,\endz)}, {PY(\endx,\endy,\endz)}) to ({PX(\endx,\endy,\startz)}, {PY(\endx,\endy,\startz))}) to ({PX(\startx,\endy,\startz)}, {PY(\startx,\endy,\startz)}) to cycle;

\draw[draw = none, opacity = .7, fill = blue!90] ({PX(5/6,0,-5/6)}, {PY(5/6,0,-5/6)}) to ({PX(5/6,\endy,-5/6)}, {PY(5/6,\endy,-5/6)}) to ({PX(5/6+\rthreebd*3,\endy,-5/6+\rthreebd*3)}, {PY(5/6+\rthreebd*3,\endy,-5/6+\rthreebd*3)}) to ({PX(5/6+\rthreebd*3,0-\rthreebd*4,-5/6+\rthreebd*3)}, {PY(5/6+\rthreebd*3,0-\rthreebd*4,-5/6+\rthreebd*3)}) to cycle;

\draw[draw = none,  opacity = .4, fill = blue!80] ({PX(5/6,0,-5/6)}, {PY(5/6,0,-5/6)}) to ({PX(5/6,\endy,-5/6)}, {PY(5/6,\endy,-5/6)}) to ({PX(5/6+\rtwobd*4,\endy,-5/6-\rtwobd*4)}, {PY(5/6+\rtwobd*4,\endy,-5/6-\rtwobd*4)}) to ({PX(5/6+\rtwobd*4,0+\rtwobd*3,-5/6-\rtwobd*4)}, {PY(5/6+\rtwobd*4,0+\rtwobd*3,-5/6-\rtwobd*4)}) to cycle;

\draw[draw = none, opacity = .5, fill = blue!30] ({PX(5/6,0,-5/6)}, {PY(5/6,0,-5/6)}) to ({PX(5/6+\rtwobd*4,0+\rtwobd*3,-5/6-\rtwobd*4)}, {PY(5/6+\rtwobd*4,0+\rtwobd*3,-5/6-\rtwobd*4)}) to ({PX(5/6+\rthreebd*3,0-\rthreebd*4,-5/6+\rthreebd*3)}, {PY(5/6+\rthreebd*3,0-\rthreebd*4,-5/6+\rthreebd*3)}) to cycle;

\draw[draw = blue!30, opacity = .77, left color=blue!15, right color=blue!20] ({PX(5/6+\rtwobd*4,0+\rtwobd*3,-5/6-\rtwobd*4)}, {PY(5/6+\rtwobd*4,0+\rtwobd*3,-5/6-\rtwobd*4)}) to ({PX(5/6+\rthreebd*3,0-\rthreebd*4,-5/6+\rthreebd*3)}, {PY(5/6+\rthreebd*3,0-\rthreebd*4,-5/6+\rthreebd*3)}) to ({PX(5/6+\rthreebd*3,\endy,-5/6+\rthreebd*3)}, {PY(5/6+\rthreebd*3,\endy,-5/6+\rthreebd*3)}) to ({PX(5/6+\rtwobd*4,\endy,-5/6-\rtwobd*4)}, {PY(5/6+\rtwobd*4,\endy,-5/6-\rtwobd*4)}) to cycle;

\shade[left color=blue!20, right color=blue!40, opacity = .75]({PX(5/6,\endy,-5/6)}, {PY(5/6,\endy,-5/6)}) to ({PX(5/6+\rthreebd*3,\endy,-5/6+\rthreebd*3)}, {PY(5/6+\rthreebd*3,\endy,-5/6+\rthreebd*3)}) to ({PX(5/6+\rtwobd*4,\endy,-5/6-\rtwobd*4)}, {PY(5/6+\rtwobd*4,\endy,-5/6-\rtwobd*4)}) to cycle;

\draw[draw = blue!80, dashed, line width = .2mm] ({PX(5/6+\rtwobd*4,0+\rtwobd*3,-5/6-\rtwobd*4)}, {PY(5/6+\rtwobd*4,0+\rtwobd*3,-5/6-\rtwobd*4)}) to  ({PX(5/6+\rtwobd*4,\endy,-5/6-\rtwobd*4)}, {PY(5/6+\rtwobd*4,\endy,-5/6-\rtwobd*4)}) to ({PX(5/6+\rthreebd*3,\endy,-5/6+\rthreebd*3)}, {PY(5/6+\rthreebd*3,\endy,-5/6+\rthreebd*3)}) to ({PX(5/6+\rthreebd*3,0-\rthreebd*4,-5/6+\rthreebd*3)}, {PY(5/6+\rthreebd*3,0-\rthreebd*4,-5/6+\rthreebd*3)}) to cycle;

\begin{scope}
\clip({PX(5/6+\rtwobd*4,0+\rtwobd*3,-5/6-\rtwobd*4)}, {PY(5/6+\rtwobd*4,0+\rtwobd*3,-5/6-\rtwobd*4)}) to  ({PX(5/6+\rtwobd*4,\endy,-5/6-\rtwobd*4)}, {PY(5/6+\rtwobd*4,\endy,-5/6-\rtwobd*4)}) to ({PX(5/6+\rthreebd*3,\endy,-5/6+\rthreebd*3)}, {PY(5/6+\rthreebd*3,\endy,-5/6+\rthreebd*3)}) to ({PX(5/6+\rthreebd*3,0-\rthreebd*4,-5/6+\rthreebd*3)}, {PY(5/6+\rthreebd*3,0-\rthreebd*4,-5/6+\rthreebd*3)}) to cycle;

\draw[draw = blue!80, dashed, line width = .25mm] ({PX(\startx,\starty,\startz)}, {PY(\startx,\starty,\startz)}) to ({PX(\startx,\starty,\endz)}, {PY(\startx,\starty,\endz)});
\end{scope}

\draw[draw = blue!80, dashed, line width = .2mm] ({PX(5/6,\endy,-5/6)}, {PY(5/6,\endy,-5/6)}) to ({PX(5/6+\rthreebd*3,\endy,-5/6+\rthreebd*3)}, {PY(5/6+\rthreebd*3,\endy,-5/6+\rthreebd*3)}) to ({PX(5/6+\rtwobd*4,\endy,-5/6-\rtwobd*4)}, {PY(5/6+\rtwobd*4,\endy,-5/6-\rtwobd*4)}) to cycle;

\end{scope}
\end{tikzpicture}}%
\\
\multicolumn{2}{c}{
\begin{tabular}{r@{\hskip .1 cm}p{.9\textwidth}}
(c) & A plot of the boundary of $Q_g$ is in green, and $C^G_{\Gamma}\cap H$ is in blue.
The ambient space is $H$ projected onto the coordinates $x_1, x_2,$ and $y_1$.
Using Lemma 5.4 in \cite{munoz2025characterization} and Lemma~\ref{lem:Implied0}, one can argue that $C^{G}_{\Gamma} \cap H$ can be described by intersecting $H$ with the three inequalities defining $C_{\Gamma}$ corresponding to $\mbfs{\beta}_2, \mbfs{\beta}_3$, and $\mbfs{\beta}^*$.
\end{tabular}}
\end{tabular}
\caption{This figure contains three plots illustrating $\Gamma(\mbfs{\beta}) := |\mbfs{\beta}|$, where the absolute value is taken component-wise, $\mbfs{a}{}^\top\Gamma(\mbfs{\beta})+\mbfs{a}{}^\top\mbfs{\beta}$, and $C_{\Gamma} \cap H$, respectively, for $\mbfs{d} := (\sfrac{3}{5},-\sfrac{4}{5})$ and $\mbfs{a} := (-\sfrac{3}{5},\sfrac{4}{5})$. 
We set $\mbfs{\beta}_1 :=  -\mbfs{\beta}_3 := (1,0)$, $\mbfs{\beta}_2 := -\mbfs{\beta}_4 := (0,1)$, and $\mbfs{\beta}^* := (-\sfrac{4}{5}, -\sfrac{3}{5})$.
Fig.~\ref{example:easycase} (b) shows that $G^{\circ}$ is the open interval from $\mbfs{\beta}_2$ to $\mbfs{\beta}^*$, while $\mbfs{\beta}_1 \in G \setminus \cl(G^\circ)$.
Thus, $\cl(G^\circ) \neq G$.
}
\label{example:easycase}
\end{figure}

The set $G^{\circ}$ is helpful in most proofs of our main results. For example, in this section, we critically use $G^\circ$ to prove that $C^G_{\Gamma} \cap H$ is $Q_g$-free when $\Gamma(-\mbfs{d}) \neq \mbfs{a}$; see Lemma \ref{lemma:patching}.
In later sections, when proving the sufficient condition for maximality, the strict inequality in the definition of $G^\circ$ will be leveraged in rescaling arguments.

For our proof that $C^G_{\Gamma} \cap H$ is $Q_g$-free when $\Gamma(-\mbfs{d}) \neq \mbfs{a}$, we use the following two lemmata.
Lemma~\ref{lem:Implied1} follows from the Cauchy-Schwarz Inequality and the inequality $\|\mbfs{a}\| \le \|\mbfs{d}\| = 1$.

\begin{lemma}\label{lem:Implied1}
Assume $\|\mbfs{a}\| \le \|\mbfs{d}\|\ = 1$.
If $\Gamma:D^m \to D^n$ is non-expansive, then $\mbfs{d} \not \in G^{\circ}$.
Furthermore, if $\Gamma(-\mbfs{d}) \neq \mbfs{a}$, then $- \mbfs{d} \in G^{\circ}$.
\end{lemma}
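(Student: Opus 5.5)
The argument only needs Cauchy--Schwarz applied to the quantity $\mbfs{a}{}^\top\Gamma(\mbfs{\beta}) + \mbfs{d}{}^\top\mbfs{\beta}$ at the two points $\mbfs{\beta} = \pm\mbfs{d}$; these lie in $D^m$ because $\|\mbfs{d}\| = 1$. Non-expansiveness is not needed beyond $\Gamma$ mapping into $D^n$, so $\|\Gamma(\mbfs{\beta})\| = 1$ for every $\mbfs{\beta}$.

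For the first claim, evaluate at $\mbfs{\beta} = \mbfs{d}$. Cauchy--Schwarz gives $\mbfs{a}{}^\top\Gamma(\mbfs{d}) \ge -\|\mbfs{a}\|\,\|\Gamma(\mbfs{d})\| = -\|\mbfs{a}\| \ge -1$, and $\mbfs{d}{}^\top\mbfs{d} = 1$. Hence
\[
\mbfs{a}{}^\top\Gamma(\mbfs{d}) + \mbfs{d}{}^\top\mbfs{d} \;\ge\; 1 - \|\mbfs{a}\| \;\ge\; 0,
\]
which is not strictly negative. Therefore $\mbfs{d} \notin G^{\circ}$.

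For the second claim, evaluate at $-\mbfs{d}$. Here $\mbfs{d}{}^\top(-\mbfs{d}) = -1$, and Cauchy--Schwarz gives $\mbfs{a}{}^\top\Gamma(-\mbfs{d}) \le \|\mbfs{a}\| \le 1$. So
\[
\mbfs{a}{}^\top\Gamma(-\mbfs{d}) - 1 \le 0,
\]
and $-\mbfs{d} \in G^{\circ}$ unless equality holds throughout. Equality requires both $\|\mbfs{a}\| = 1$ and equality in Cauchy--Schwarz with a nonnegative multiple, i.e.\ $\mbfs{a} = \lambda\Gamma(-\mbfs{d})$ with $\lambda \ge 0$. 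Since $\|\mbfs{a}\| = \|\Gamma(-\mbfs{d})\| = 1$, this forces $\lambda = 1$ and thus $\mbfs{a} = \Gamma(-\mbfs{d})$. Under the hypothesis $\Gamma(-\mbfs{d}) \neq \mbfs{a}$ the inequality is strict, so $-\mbfs{d} \in G^{\circ}$.

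The only step requiring care is this equality case of Cauchy--Schwarz. It must be applied to unit vectors so that equality pins down $\mbfs{a} = \Gamma(-\mbfs{d})$ exactly, rather than merely parallelism.
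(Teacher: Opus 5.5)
Your proof is correct and takes the paper's approach: the paper only remarks that the lemma follows from Cauchy--Schwarz and $\|\mbfs{a}\| \le \|\mbfs{d}\| = 1$, and you have written out those details, including the equality case that forces $\mbfs{a} = \Gamma(-\mbfs{d})$. Your observation that non-expansiveness is not needed, only that $\Gamma$ maps into $D^n$, is also accurate.
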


\begin{lemma}\label{lem:Implied0}
Assume $\|\mbfs{a}\| \le \|\mbfs{d}\|\ = 1$.
If $\Gamma:D^m \to D^n$ is non-expansive and $\Gamma(-\mbfs{d}) \neq \mbfs{a}$, then $C^G_{\Gamma} \cap H = C^{G^{\circ}}_{\Gamma} \cap H$.
\end{lemma}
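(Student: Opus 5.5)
Since $G^\circ\subseteq G$, every inequality defining $C^{G^\circ}_\Gamma$ also defines $C^G_\Gamma$, so $C^G_\Gamma \subseteq C^{G^\circ}_\Gamma$ and hence $C^G_\Gamma\cap H\subseteq C^{G^\circ}_\Gamma\cap H$. For the reverse inclusion it suffices to show the following: for each $\overline{\mbfs\beta}\in G\setminus G^\circ$, the inequality $-\Gamma(\overline{\mbfs\beta})^\top\mbfs x+\overline{\mbfs\beta}{}^\top\mbfs y\le 0$ is implied on $H$ by the inequalities indexed by $G^\circ$. For such $\overline{\mbfs\beta}$ we have $\mbfs a^\top\Gamma(\overline{\mbfs\beta})+\mbfs d^\top\overline{\mbfs\beta}=0$.

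The idea is to write $(-\Gamma(\overline{\mbfs\beta}),\overline{\mbfs\beta})$ as a nonnegative combination of $(-\Gamma(-\mbfs d),-\mbfs d)$, which is indexed by $-\mbfs d\in G^\circ$ by Lemma~\ref{lem:Implied1}, and the normal $(\mbfs a,\mbfs d)$ of $H$. Concretely, set $\mbfs\beta_t:=\overline{\mbfs\beta}$ perturbed slightly toward $-\mbfs d$ along a great circle, for small $t>0$. Near the points of $G\setminus G^\circ$ the function $\phi(\mbfs\beta):=\mbfs a^\top\Gamma(\mbfs\beta)+\mbfs d^\top\mbfs\beta$ should become strictly negative when we move toward $-\mbfs d$. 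To see why, note that $\mbfs d^\top\mbfs\beta$ decreases at a linear rate in the arc length. Meanwhile $|\mbfs a^\top(\Gamma(\mbfs\beta_t)-\Gamma(\overline{\mbfs\beta}))|\le\|\mbfs a\|\,\|\mbfs\beta_t-\overline{\mbfs\beta}\|$ by non-expansiveness, and $\|\mbfs a\|\le 1$. The comparison between the chord and the change in $\mbfs d^\top\mbfs\beta$ is delicate, so I would instead use a global argument on the segment to $-\mbfs d$.

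More robustly, I would take $\mbfs\beta_\lambda:=\frac{\overline{\mbfs\beta}-\lambda\mbfs d}{\|\overline{\mbfs\beta}-\lambda\mbfs d\|}$ for $\lambda>0$ (assuming $\overline{\mbfs\beta}\neq\mbfs d$; the case $\overline{\mbfs\beta}=\mbfs d$ is excluded since $\mbfs d\notin G$ unless $\mbfs a=-\Gamma(\mbfs d)$, which I would handle separately). Then:
\begin{enumerate}[label=(\roman*)]
\item show $\phi(\mbfs\beta_\lambda)<0$, using the non-expansive inner-product form $\mbfs\beta^\top\overline{\mbfs\beta}\le\Gamma(\mbfs\beta)^\top\Gamma(\overline{\mbfs\beta})$ together with Cauchy--Schwarz and $\Gamma(-\mbfs d)\neq\mbfs a$, so that $\mbfs\beta_\lambda\in G^\circ$;
\item conclude that for points $(\mbfs x,\mbfs y)\in C^{G^\circ}_\Gamma\cap H$, the inequalities at $\mbfs\beta_\lambda$ hold;
\item let $\lambda\to 0$. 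Continuity of $\Gamma$ gives $-\Gamma(\overline{\mbfs\beta})^\top\mbfs x+\overline{\mbfs\beta}{}^\top\mbfs y\le 0$.
\end{enumerate}
Thus the closure-type approximation of $\overline{\mbfs\beta}$ by points of $G^\circ$ does the work. However, Figure~\ref{example:easycase} shows that $G\setminus\cl(G^\circ)$ can be nonempty, so this cannot always succeed.

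So the real plan must be the combination argument. For $\overline{\mbfs\beta}\in G\setminus\cl(G^\circ)$ I would show that on $H$, the inequality at $\overline{\mbfs\beta}$ equals a positive multiple of the inequality at $-\mbfs d$ plus a multiple of $\mbfs a^\top\mbfs x+\mbfs d^\top\mbfs y=-1$, up to adding valid inequalities. Equivalently, I would use Theorem~\ref{Rockafellarpatched} and Lemma~\ref{lem:Wavy2}: tilt the inequality so it becomes valid for the cone $C^{G^\circ}_\Gamma$, then express the tilted normal via $G^\circ$-generators.

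The main obstacle is exactly this step: proving that on $H$ each zero-level inequality ($\phi(\overline{\mbfs\beta})=0$) is dominated by $G^\circ$ inequalities. I would first try an explicit two-term identity. Since $\phi(\overline{\mbfs\beta})=0$, the point of $H$ contributes $-1$ through the $(\mbfs a,\mbfs d)$ term. Using $\overline{\mbfs\beta}{}^\top(-\mbfs d)\le\Gamma(\overline{\mbfs\beta})^\top\Gamma(-\mbfs d)$, I would try to bound $-\Gamma(\overline{\mbfs\beta})^\top\mbfs x+\overline{\mbfs\beta}{}^\top\mbfs y$ by a nonnegative multiple of $-\Gamma(-\mbfs d)^\top\mbfs x-\mbfs d^\top\mbfs y$ on points of $C^{G^\circ}_\Gamma\cap H$. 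If that fails, I would fall back on the great-circle arc from $\overline{\mbfs\beta}$ to $-\mbfs d$ and interpolate between the endpoint inequalities.
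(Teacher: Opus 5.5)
Your reduction is correct. $G^\circ\subseteq G$ gives one inclusion, and it then suffices to show that for each $\overline{\mbfs\beta}\in G\setminus G^\circ$ (where $\phi(\overline{\mbfs\beta})=0$, with $\phi(\mbfs\beta):=\mbfs a^\top\Gamma(\mbfs\beta)+\mbfs d^\top\mbfs\beta$ as in your notation) the inequality at $\overline{\mbfs\beta}$ is implied on $H$ by the $G^\circ$-inequalities. The case $\overline{\mbfs\beta}=\mbfs d$ is easy: there $\Gamma(\mbfs d)=-\mbfs a$, so the inequality is $\mbfs a^\top\mbfs x+\mbfs d^\top\mbfs y\le 0$.

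However, the proposal never proves the core case, and none of the routes you list works as stated.
\begin{itemize}
\item Step (i) is false in general. As you note, $G\setminus\cl(G^\circ)$ can be nonempty, so the limiting argument only covers $\overline{\mbfs\beta}\in\cl(G^\circ)$.
\item An identity $(-\Gamma(\overline{\mbfs\beta}),\overline{\mbfs\beta})=\mu(-\Gamma(-\mbfs d),-\mbfs d)+\lambda(\mbfs a,\mbfs d)$ is impossible unless $\overline{\mbfs\beta}=\pm\mbfs d$. Comparing $\mbfs y$-components forces $\overline{\mbfs\beta}\in\mathrm{span}\{\mbfs d\}$.
\item The tilting route via Lemma~\ref{lem:Wavy2} and Theorem~\ref{Rockafellarpatched} is circular. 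Lemma~\ref{lem:Wavy2} needs as input an inequality already known to be valid on $C^{G^\circ}_\Gamma\cap H$, which is exactly what you want to prove.
\item ``Interpolating between the endpoint inequalities'' of the arc from $\overline{\mbfs\beta}$ to $-\mbfs d$ is not an argument, since one endpoint is the target inequality.
\end{itemize}

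The missing idea is to choose the right intermediate index. Assume $m\ge 2$; for $m=1$ only $\pm\mbfs d$ occur. Parametrize the half circle $\mbfs\beta_\alpha=\alpha\mbfs d+\sqrt{1-\alpha^2}\,\overline{\mbfs d}$ with $\overline{\mbfs\beta}=\mbfs\beta_{\overline{\alpha}}$, and set
\[
\alpha^*:=\sup\{\alpha\in(-1,\overline{\alpha}):\phi(\mbfs\beta_\alpha)<0\}.
\]
This set is nonempty because $-\mbfs d\in G^\circ$ by Lemma~\ref{lem:Implied1}. Continuity gives $\mbfs\beta^*:=\mbfs\beta_{\alpha^*}\in\cl(G^\circ)$ with $\phi(\mbfs\beta^*)=0$. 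Since $-1<\alpha^*\le\overline{\alpha}$, the point $\overline{\mbfs\beta}$ lies on the arc from $\mbfs d$ to $\mbfs\beta^*$. Hence $\overline{\mbfs\beta}=\lambda_1\mbfs d+\lambda_2\mbfs\beta^*$ with $\lambda_1,\lambda_2\ge 0$. Put $\widetilde\gamma:=-\lambda_1\mbfs a+\lambda_2\Gamma(\mbfs\beta^*)$.

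Using $\phi(\mbfs\beta^*)=0$ and $\|\mbfs a\|\le 1$, you get
\[
\|\widetilde\gamma\|^2=\lambda_1^2\|\mbfs a\|^2+\lambda_2^2+2\lambda_1\lambda_2\mbfs d^\top\mbfs\beta^*\le\|\lambda_1\mbfs d+\lambda_2\mbfs\beta^*\|^2=1.
\]
Using $\phi(\overline{\mbfs\beta})=0$ and non-expansiveness, you get
\[
\Gamma(\overline{\mbfs\beta})^\top\widetilde\gamma=\lambda_1\mbfs d^\top\overline{\mbfs\beta}+\lambda_2\Gamma(\overline{\mbfs\beta})^\top\Gamma(\mbfs\beta^*)\ge\overline{\mbfs\beta}{}^\top(\lambda_1\mbfs d+\lambda_2\mbfs\beta^*)=1.
\]
Cauchy--Schwarz then forces $\Gamma(\overline{\mbfs\beta})=\widetilde\gamma$. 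This gives the exact identity
\[
(-\Gamma(\overline{\mbfs\beta}),\overline{\mbfs\beta})=\lambda_1(\mbfs a,\mbfs d)+\lambda_2(-\Gamma(\mbfs\beta^*),\mbfs\beta^*).
\]
Because $\lambda_1\ge 0$, on $H$ the $\overline{\mbfs\beta}$-inequality follows from the $\mbfs\beta^*$-inequality, which is a limit of $G^\circ$-inequalities.

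So the ``delicate'' equality case you noticed, comparing chord length with the decrease of $\mbfs d^\top\mbfs\beta$, is not an obstacle but the mechanism. The function $\phi$ can stay at $0$ along the arc only if $\Gamma$ is rigidly of this form there.
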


\begin{proof}
The inclusion $G^{\circ} \subseteq G$ implies that $C^G_{\Gamma} \cap H \subseteq C^{G^{\circ}}_{\Gamma} \cap H$.
Hence, it suffices to show $C^G_{\Gamma} \cap H \supseteq C^{G^{\circ}}_{\Gamma} \cap H$.
To this end, we show every inequality defining $C^G_{\Gamma} \cap H$ is valid for $C^{G^{\circ}}_{\Gamma} \cap H$, i.e., we prove the following: for each $\overline{\mbfs{\beta}} \in G \setminus G^{\circ}$,
\begin{equation}\label{eqImpliedWeakly} 
\begin{array}{@{\hskip 0 cm}l}
\text{the inequality $-\Gamma(\overline{\mbfs{\beta}}){}^\top\mbfs{x} + \overline{\mbfs{\beta}}{}^\top\mbfs{y} \le 0$ is implied by $\mbfs{a}{}^\top\mbfs{x}+\mbfs{d}{}^\top\mbfs{y} = -1$}\\
\text{and inequalities of the form $-\Gamma({\mbfs{\beta}}){}^\top\mbfs{x} + {\mbfs{\beta}}{}^\top\mbfs{y} \le 0$, where $\mbfs{\beta} \in G^{\circ}$.}
 \end{array}
 \end{equation}

As a first case, suppose $\mbfs{d} \in G \setminus G^{\circ}$.
We have $0 \ge \mbfs{a}{}^\top\Gamma(\mbfs{d}) + \mbfs{d}{}^\top\mbfs{d} = \mbfs{a}{}^\top\Gamma(\mbfs{d})  + 1$ because $\mbfs{d} \in G$.
Thus, $\Gamma(\mbfs{d}) = - \mbfs{a}$ by the Cauchy-Schwarz Inequality. 
Hence, the inequality $-\Gamma(\mbfs{d}){}^\top\mbfs{x} + \mbfs{d}{}^\top\mbfs{y} \le 0$ is equivalent to $\mbfs{a}{}^\top\mbfs{x} + \mbfs{d}{}^\top \mbfs{y} \le 0$, which is implied by $\mbfs{a}{}^\top\mbfs{x}+\mbfs{d}{}^\top\mbfs{y} = -1$.

As a second case, suppose $m = 1$ and let $\overline{\mbfs{\beta}} \in G \setminus G^{\circ}$.
Here, we have $D^m = D^1 = \{\pm \mbfs{d}\}$.
By Lemma~\ref{lem:Implied1}, the only case to consider is if $\overline{\mbfs{\beta}} = \mbfs{d} \in G \setminus G^{\circ}$, which is the first case.

In light of the previous two cases, it suffices to show that \eqref{eqImpliedWeakly} is true when $m \ge 2$ and $\overline{\mbfs{\beta}} \in G \setminus (G^{\circ} \cup \{\mbfs{d}\})$; we assume this for the rest of the proof.

There exists an orthonormal basis $\mbfs{d},\overline{\mbfs{d}}$ of $\text{span}\{\mbfs{d}, \overline{\mbfs{\beta}}\} $.
After possibly swapping $\overline{\mbfs{d}}$ with $-\overline{\mbfs{d}}$, we assume $\overline{\mbfs{d}}{}^\top \overline{\mbfs{\beta}} \ge 0$.
As $\overline{\mbfs{\beta}} \not\in \{\pm \mbfs{d}\}$, there exists a number $\overline{\alpha}$ such that $-1 < \overline{\alpha} < 1$ and
\[
\textstyle\overline{\mbfs{\beta}} = \overline{\alpha} \cdot \mbfs{d} + \sqrt{1- \overline{\alpha}{}^2} \cdot \overline{\mbfs{d}}.
\]

For $\alpha \in \mbb{R}$ satisfying $-1 < \alpha < 1$, define the vector 
\[
\textstyle\mbfs{\beta}_{\alpha} :=  {\alpha} \cdot \mbfs{d} + \sqrt{1- {\alpha}{}^2} \cdot \overline{\mbfs{d}}.
\]
Thus, $\overline{\mbfs{\beta}} = \mbfs{\beta}_{\overline{\alpha}}$.
Note that this parametrization only considers a half circle: $\alpha$ can be negative, but $\sqrt{1- {\alpha}{}^2}$ cannot.

Define 
\[
\alpha^* := \sup\{\alpha \in (-1,\overline{\alpha}): \mbfs{a}{}^\top \Gamma(\mbfs{\beta}_{\alpha}) + \mbfs{d}{}^\top \mbfs{\beta}_{\alpha} < 0 \}.
\]
The set defining $\alpha^*$ is non-empty because $-\mbfs{d} \in G^{\circ}$ and $\alpha \mapsto \mbfs{a}{}^\top \Gamma(\mbfs{\beta}_{\alpha}) + \mbfs{d}{}^\top \mbfs{\beta}_{\alpha}$ is continuous.
Continuity also implies $-1 < \alpha^* \le \overline{\alpha} < 1$ and $\mbfs{a}{}^\top \Gamma(\mbfs{\beta}_{\alpha^*}) + \mbfs{d}{}^\top \mbfs{\beta}_{\alpha^*} =0 $.

We will prove that there are conic coefficients $\lambda_1, \lambda_2 \ge 0$ such that
 \[
 (-\Gamma(\overline{\mbfs{\beta}}),  \overline{\mbfs{\beta}}) = \lambda_1 \cdot (\mbfs{a}, \mbfs{d}) + \lambda_2 \cdot (-\Gamma(\mbfs{\beta}_{\alpha^*}), \mbfs{\beta}_{\alpha^*}).
 \]
After proving this, we will have that the inequality $-\Gamma(\overline{\mbfs{\beta}}){}^\top\mbfs{x} + \overline{\mbfs{\beta}}{}^\top\mbfs{y} \le 0$ is implied by $-\Gamma(\mbfs{\beta}_{\alpha^*}){}^\top\mbfs{x} + \mbfs{\beta}_{\alpha^*} {}^\top\mbfs{y} \le 0$ and $\mbfs{a}{}^\top\mbfs{x}+\mbfs{d}{}^\top\mbfs{y} = -1$.
The definition of $\alpha^* $ implies that $-\Gamma(\mbfs{\beta}_{\alpha^*}){}^\top\mbfs{x} + \mbfs{\beta}_{\alpha^*}{}^\top\mbfs{y} \le 0$ is a limit of inequalities of the form $-\Gamma(\mbfs{\beta}_{\alpha_t}){}^\top\mbfs{x} + \mbfs{\beta}_{\alpha_t}^\top\mbfs{y} \le 0$ for $\mbfs{\beta}_{\alpha_t} \in G^{\circ}$. 
Hence, \eqref{eqImpliedWeakly} holds for $\overline{\mbfs{\beta}}$.
Thus, it remains to find $\lambda_1$ and $\lambda_2$.

Set
\[
\lambda_1 := \overline{\alpha} - \lambda_2  \alpha^* \quad \text{and} \quad \lambda_2 := \frac{\sqrt{1-\overline{\alpha}^2}}{\sqrt{1-(\alpha^*)^2}}.
\]
Note that $1-(\alpha^*)^2 > 0$ because $-1 < \alpha^* < 1$.
Clearly, $\lambda_2 \ge 0$.
Next, we prove $\lambda_1 \ge 0$.
Recall $\alpha^* \le \overline{\alpha}$.
If $0 \le \alpha^*\le \overline{\alpha}$ or $\alpha^* \le \overline{\alpha} \le 0$, then 
\[
\lambda_1 = \frac{\overline{\alpha}\sqrt{1-(\alpha^*)^2\vphantom{\overline{\alpha}^2}}-\alpha^*\sqrt{1-\overline{\alpha}^2}}{\sqrt{1-(\alpha^*)^2}} \ge \frac{\alpha^*\sqrt{1-(\alpha^*)^2\vphantom{\overline{\alpha}^2}}-\alpha^*\sqrt{1-\overline{\alpha}^2}}{\sqrt{1-(\alpha^*)^2}}  \ge 0.
\]
If $\alpha^* < 0 < \overline{\alpha}$, then 
\[
\lambda_1 = \frac{\overline{\alpha}\sqrt{1-(\alpha^*)^2}-\alpha^*\sqrt{1-\overline{\alpha}^2}}{\sqrt{1-(\alpha^*)^2}} \ge  0.
\]
Hence, $\lambda_1 \ge 0$.

It can be checked that $\lambda_1 \cdot \mbfs{d} + \lambda_2 \cdot \mbfs{\beta}_{\alpha^*} = \overline{\mbfs{\beta}}$.
We will complete the proof by showing $\Gamma(\overline{\mbfs{\beta}})= -\lambda_1 \cdot \mbfs{a} + \lambda_2 \cdot \Gamma(\mbfs{\beta}_{\alpha^*})$.
Set $\widetilde{\gamma} := -\lambda_1 \cdot \mbfs{a} + \lambda_2 \cdot \Gamma(\mbfs{\beta}_{\alpha^*})$.

Using the fact that $\mbfs{\beta}_{\alpha^*} \in G \setminus G^{\circ}$ and $\|\mbfs{a}\| \le 1$, we have
\[
\|\widetilde{\gamma} \|^2 = \|\mbfs{a}\|^2 \lambda_1^2+\lambda_2^2 - 2\lambda_1\lambda_2 \mbfs{a}{}^\top\Gamma(\mbfs{\beta}_{\alpha^*}) \le  \lambda_1^2+\lambda_2^2 + 2\lambda_1\lambda_2 \mbfs{d}{}^\top\mbfs{\beta}_{\alpha^*} = \|\overline{\mbfs{\beta}}\|^2 = 1.
\]
Hence, $\|\widetilde{\gamma} \| \le 1$. 
Using this, together with $\overline{\mbfs{\beta}} \in G \setminus G^{\circ}$ and the fact that $\Gamma$ is non-expansive, we have
\begin{align*}
1 \ge  \|\Gamma(\overline{\mbfs{\beta}})\| \cdot \|\widetilde{\gamma}\|  \ge \Gamma(\overline{\mbfs{\beta}}){}^\top \widetilde{\gamma} =&~ -\lambda_1 \Gamma(\overline{\mbfs{\beta}}){}^\top \mbfs{a}+\lambda_2 \Gamma(\overline{\mbfs{\beta}}){}^\top \Gamma(\mbfs{\beta}_{\alpha^*})\\
=&~ \lambda_1 \mbfs{d}{}^\top\overline{\mbfs{\beta}} + \lambda_2 \Gamma(\overline{\mbfs{\beta}}){}^\top \Gamma(\mbfs{\beta}_{\alpha^*})\\
\ge &~ \lambda_1 \mbfs{d}{}^\top\overline{\mbfs{\beta}} + \lambda_2 \overline{\mbfs{\beta}}{}^\top \mbfs{\beta}_{\alpha^*}\\
= &~\overline{\mbfs{\beta}}{}^\top \overline{\mbfs{\beta}}\\
= &~1.
\end{align*}
Hence, $\|\widetilde{\gamma} \| = 1$ and $\widetilde{\gamma} = \Gamma(\overline{\mbfs{\beta}})$.
\end{proof}

We now prove that $C^{G}_{\Gamma} \cap H = C^{G^{\circ}}_{\Gamma} \cap H$ is $Q_g$-free, provided $\mbfs{a} \neq \Gamma(-\mbfs{d})$.

\begin{lemma}\label{lemma:patching}
Assume $\|\mbfs{a}\| \le \|\mbfs{d}\|\ = 1$.
If $\Gamma : D^m \to D^n$ is such that $C_{\Gamma}$ is a full-dimensional maximal $Q_h$-free set and $\mbfs{a} \neq \Gamma(-\mbfs{d})$, then $C^{G}_{\Gamma} \cap H$ is $Q_g$-free.
\end{lemma}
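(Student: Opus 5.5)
The plan is to argue by contradiction, working with the description given by Lemma~\ref{lem:Implied0}: since $\Gamma$ is non-expansive (Theorem~\ref{thmHQuad}) and $\mbfs{a}\neq\Gamma(-\mbfs{d})$, we have $K:=C^G_{\Gamma}\cap H = C^{G^\circ}_{\Gamma}\cap H$. So it suffices to show that no point of $\relint(K)$ lies in $Q_g$. Suppose $(\overline{\mbfs{x}},\overline{\mbfs{y}})\in\relint(K)\cap Q_g$.

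\emph{Reduction to a strict point.} Since $\relint(K)$ is relatively open in $H$, I would first perturb to get $\|\overline{\mbfs{x}}\|<\|\overline{\mbfs{y}}\|$. Replace $\overline{\mbfs{x}}$ by $(1-\varepsilon)\overline{\mbfs{x}}$ and then rescale the pair positively back onto $H$. This is possible for small $\varepsilon$ because $\mbfs{a}{}^\top\overline{\mbfs{x}}+\mbfs{d}{}^\top\overline{\mbfs{y}}=-1<0$. The new point is arbitrarily close to the old one, so it stays in $\relint(K)$. Note also that $\overline{\mbfs{y}}\neq\mbfs{0}$, since otherwise $\overline{\mbfs{x}}=\mbfs{0}$, contradicting $H$. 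Set $\overline{\mbfs{\beta}}:=\overline{\mbfs{y}}/\|\overline{\mbfs{y}}\|\in D^m$. Then
\[
-\Gamma(\overline{\mbfs{\beta}}){}^\top\overline{\mbfs{x}}+\overline{\mbfs{\beta}}{}^\top\overline{\mbfs{y}}\ \ge\ \|\overline{\mbfs{y}}\|-\|\overline{\mbfs{x}}\|>0,
\]
so the $\overline{\mbfs{\beta}}$-inequality is strictly violated. If $\overline{\mbfs{\beta}}\in G$, this contradicts $(\overline{\mbfs{x}},\overline{\mbfs{y}})\in K$, and we are done.

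\emph{The case $\overline{\mbfs{\beta}}\notin G$.} Here $\mbfs{a}{}^\top\Gamma(\overline{\mbfs{\beta}})+\mbfs{d}{}^\top\overline{\mbfs{\beta}}>0$, and we must exhibit some $\mbfs{\beta}\in G^\circ$ whose inequality is violated, or at least tight, at a relative-interior point. I would imitate the proof of Lemma~\ref{lem:Implied0}.
\begin{enumerate}
\item Parametrize the half great circle $\mbfs{\beta}_\alpha=\alpha\mbfs{d}+\sqrt{1-\alpha^2}\,\overline{\mbfs{d}}$ joining $-\mbfs{d}$ (which lies in $G^\circ$ by Lemma~\ref{lem:Implied1}) to $\overline{\mbfs{\beta}}$. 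If $m=1$, or $\overline{\mbfs{\beta}}=\mbfs{d}$, handle this separately: there $\Gamma(\mbfs{d})$ is compared with $-\mbfs{a}$ via Cauchy--Schwarz.
\item Study the continuous function $\phi(\alpha):=-\Gamma(\mbfs{\beta}_\alpha){}^\top\overline{\mbfs{x}}+\mbfs{\beta}_\alpha{}^\top\overline{\mbfs{y}}$ together with $\psi(\alpha):=\mbfs{a}{}^\top\Gamma(\mbfs{\beta}_\alpha)+\mbfs{d}{}^\top\mbfs{\beta}_\alpha$.
\item Take $\alpha^*$ to be the last parameter with $\psi(\alpha^*)=0$ before reaching $\overline{\mbfs{\beta}}$, so that on $(\alpha^*,\overline{\alpha}]$ we have $\psi>0$.
\item Write $(-\Gamma(\overline{\mbfs{\beta}}),\overline{\mbfs{\beta}})$ as a combination $\lambda_1(\mbfs{a},\mbfs{d})+\lambda_2(-\Gamma(\mbfs{\beta}_{\alpha^*}),\mbfs{\beta}_{\alpha^*})$. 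Then use non-expansiveness, via $\Gamma(\overline{\mbfs{\beta}}){}^\top\Gamma(\mbfs{\beta}_{\alpha^*})\ge\overline{\mbfs{\beta}}{}^\top\mbfs{\beta}_{\alpha^*}$ and $\|\mbfs{a}\|\le1$, to control the sign of $\lambda_1$ and the error in the $\Gamma$-component.
\item Evaluating on $H$, the term $\lambda_1(\mbfs{a}{}^\top\overline{\mbfs{x}}+\mbfs{d}{}^\top\overline{\mbfs{y}})=-\lambda_1$ should then force $\phi(\alpha^*)\ge 0$.
\item The $\beta_{\alpha^*}$-inequality is a limit of $G^\circ$-inequalities. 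Combined with the openness of $\relint(K)$ (shift the point slightly in the direction increasing the left-hand side), this yields a violated $G^\circ$-inequality, a contradiction.
\end{enumerate}
Alternatively, and perhaps more robustly, one can use that $\cone(K)\supseteq$ the relevant region. The point $(\overline{\mbfs{x}},\overline{\mbfs{y}})$ lies in the interior of $Q_h$, hence outside $C_\Gamma$. One then shows that the set of $\mbfs{\beta}$ whose inequalities cut it off must meet $G^\circ$, using that inequalities with $\psi(\mbfs{\beta})>0$ are ``pointing away'' from $H$.

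\emph{Main obstacle.} I expect the step for $\overline{\mbfs{\beta}}\notin G$ to be the crux. There the decomposition has the ``wrong'' sign for $\lambda_1$, so the hyperplane $H$ no longer helps directly. Precisely here the hypothesis $\mbfs{a}\neq\Gamma(-\mbfs{d})$, through $-\mbfs{d}\in G^\circ$, must enter, guaranteeing that the arc actually reaches $G^\circ$. My first attempt would be to track the sign of $\phi$ along the arc from $\overline{\mbfs{\beta}}$ toward $-\mbfs{d}$. The aim is to show that $\phi$ stays positive until $\psi$ becomes negative. This would use the two-dimensional computation of Lemma~\ref{lem:Implied0}, with $\|\overline{\mbfs{x}}\|<\|\overline{\mbfs{y}}\|$ replacing the role of $\|\widetilde{\gamma}\|\le1$.
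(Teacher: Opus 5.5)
\textbf{There is a gap at the crux (your steps 4--5).} Your reduction is fine: you perturb to $\|\overline{\mbfs{x}}\|<\|\overline{\mbfs{y}}\|$ and dispose of $\overline{\mbfs{\beta}}\in G$. But the case $\overline{\mbfs{\beta}}\notin G$ is never carried out, and the mechanism you describe would not close it. When $\overline{\mbfs{\beta}}\notin G$, only the $\mbfs{y}$-part $\overline{\mbfs{\beta}}=\lambda_1\mbfs{d}+\lambda_2\mbfs{\beta}_{\alpha^*}$ of your decomposition is exact. Write $\widetilde{\gamma}:=-\lambda_1\mbfs{a}+\lambda_2\Gamma(\mbfs{\beta}_{\alpha^*})$. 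Evaluating at your point on $H$ gives
\[
\phi(\overline{\alpha})=-\lambda_1+\lambda_2\,\phi(\alpha^*)-\bigl(\Gamma(\overline{\mbfs{\beta}})-\widetilde{\gamma}\bigr)^\top\overline{\mbfs{x}}.
\]
The error term has no controllable sign. Non-expansiveness constrains $\Gamma(\overline{\mbfs{\beta}}){}^\top\Gamma(\mbfs{\beta}_{\alpha^*})$, not the inner product of $\Gamma(\overline{\mbfs{\beta}})-\widetilde{\gamma}$ with an arbitrary $\overline{\mbfs{x}}$. In Lemma~\ref{lem:Implied0} the error vanishes because $\overline{\mbfs{\beta}}\in G$ there; here $\psi(\overline{\alpha})>0$ and that argument breaks.

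Your closing remark, that $\|\overline{\mbfs{x}}\|<\|\overline{\mbfs{y}}\|$ should \emph{replace} $\|\widetilde{\gamma}\|\le1$, pinpoints the missing idea. You need both, and $\Gamma(\overline{\mbfs{\beta}})$ should be discarded altogether. Also, $\lambda_1$ does not have the wrong sign: $\lambda_1\ge 0$ exactly as in Lemma~\ref{lem:Implied0}, and that is the sign you want.

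\textbf{The repair} is the paper's Claim~\ref{claim4}: your $\widetilde{\gamma}$ is $\Psi_{\overline{\mbfs{\beta}}}(\mu)$ with $\lambda_1=\mu$ and $\lambda_2=\|\overline{\mbfs{\beta}}-\mu\mbfs{d}\|$. Since $\lambda_1,\lambda_2\ge0$, $\psi(\alpha^*)\ge0$ and $\|\mbfs{a}\|\le 1$, one gets $\|\widetilde{\gamma}\|^2\le\|\lambda_1\mbfs{d}+\lambda_2\mbfs{\beta}_{\alpha^*}\|^2=1$. Applying Cauchy--Schwarz to $\widetilde{\gamma}$ instead of $\Gamma(\overline{\mbfs{\beta}})$ yields
\[
0<\|\overline{\mbfs{y}}\|-\|\overline{\mbfs{x}}\|\le-\widetilde{\gamma}^\top\overline{\mbfs{x}}+\overline{\mbfs{\beta}}{}^\top\overline{\mbfs{y}}=-\lambda_1+\lambda_2\,\phi(\alpha^*),
\]
so $\phi(\alpha^*)>0$. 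Since $\psi(\alpha^*)=0$ places $\mbfs{\beta}_{\alpha^*}$ in $G$, and you work with $K=C^G_{\Gamma}\cap H$, this is already the contradiction.

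Three further points follow from this repair.
\begin{enumerate}
\item Step 6 becomes unnecessary, which is fortunate. For your choice of $\alpha^*$ (the zero of $\psi$ nearest $\overline{\mbfs{\beta}}$), $\mbfs{\beta}_{\alpha^*}$ need not lie in $\cl(G^\circ)$, so the claimed limit of $G^\circ$-inequalities may not exist. Any zero of $\psi$ on the arc works, and one exists because $\psi(\overline{\alpha})>0$ while $\psi<0$ at $-\mbfs{d}$.
\item For $\overline{\mbfs{\beta}}=\mbfs{d}$, nothing about $\Gamma(\mbfs{d})$ is needed. Here $\mbfs{a}{}^\top\overline{\mbfs{x}}+\mbfs{d}{}^\top\overline{\mbfs{y}}\ge\|\overline{\mbfs{y}}\|-\|\overline{\mbfs{x}}\|>0$, which contradicts membership in $H$.
\item Repaired this way, your pointwise contradiction is a valid alternative to the paper's global construction of $\overline{\Gamma}$ with $K=C_{\overline{\Gamma}}\cap H$. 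It even bypasses Lemma~\ref{lem:Implied0}. The paper's route instead gives the structural fact that $K$ is a slice of the $Q_h$-free cone $C_{\overline{\Gamma}}$.
\end{enumerate}
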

\begin{proof}
In this proof, we design a function $\overline{\Gamma}:D^m \to \mbb{R}^n$ such that $\|\overline{\Gamma}(\mbfs{\beta}) \| \le 1$ for each $\mbfs{\beta} \in D^m$, and
\[
K   = C^G_{\Gamma} \cap H = C^{G^{\circ}}_{\Gamma} \cap H  =  C^{D^m}_{\overline{\Gamma}}  \cap H= C_{\overline{\Gamma}}  \cap H.\nonumber
\]
The second equation follows from Lemma~\ref{lem:Implied0}.
The $\overline{\Gamma}$ we define will be constructed to satisfy the third equation.
The idea behind the construction of $\overline{\Gamma}$ is that it will coincide with $\Gamma$ when $\mbfs{\beta}\in G^\circ$, and for $\mbfs{\beta}\in D^m \setminus G^\circ$ the inequality $-\overline{\Gamma}(\mbfs{\beta}){}^\top\mbfs{x}+\mbfs{\beta}{}^\top\mbfs{y} \le 0$ will be redundant.

Once $\overline{\Gamma}$ is constructed, we can argue the set $C_{\overline{\Gamma}}$ is $Q_h$-free.
Indeed, let $(\mbfs{x}, \mbfs{y}) \in Q_h$.
If $\mbfs{y} \neq \mbfs{0}$, then $-\overline{\Gamma}(\mbfs{y}/\|\mbfs{y}\|)^\top\mbfs{x} + (\mbfs{y}/\|\mbfs{y}\|){}^\top \mbfs{y} \ge 0 $ by the Cauchy-Schwarz Inequality and the fact that $\|\mbfs{x}\| \le \|\mbfs{y}\|$.
If $\mbfs{y} = \mbfs{0}$, then $(\mbfs{x}, \mbfs{y}) = (\mbfs{0}, \mbfs{0})$ satisfies all inequalities defining $C_{\overline{\Gamma}}$ at equality. 
Thus, $C_{\overline{\Gamma}}\cap H$ is $Q_g$-free.
In what remains, we focus on defining $\overline{\Gamma}$ such that $C^{G^{\circ}}_{\Gamma} \cap H  =  C^{D^m}_{\overline{\Gamma}}  \cap H$.

For $\mu \ge 0$ and $\overline{\mbfs{\beta}} \in D^m \setminus (G^{\circ} \cup \{\mbfs{d}\})$, define the vectors
\begin{align*}
\overline{\mbfs{\beta}}_{\mu} & := \frac{ \overline{\mbfs{\beta}} - \mu \cdot \mbfs{d}}{ \left\|\overline{\mbfs{\beta}} - \mu \cdot \mbfs{d}\right\|} \quad\text{and}\\[.15cm]
\Psi_{\overline{\mbfs{\beta}}}(\mu) &:= 
\left\|\overline{\mbfs{\beta}} - \mu \cdot \mbfs{d}\right\| \cdot \Gamma\left(\overline{\mbfs{\beta}}_{\mu} \right) - \mu \cdot \mbfs{a}.
\end{align*}
For $\overline{\mbfs{\beta}} \in D^m \setminus (G^{\circ} \cup \{\mbfs{d}\})$, define the number
\[
\mu_{\overline{\mbfs{\beta}}}  := \sup \{\mu \ge 0:\ \overline{\mbfs{\beta}}_{\mu} \not\in G^{\circ}\}.
\]

\begin{cm}\label{claim2}
For each $\overline{\mbfs{\beta}} \in  D^m \setminus (G^{\circ} \setminus \{\mbfs{d}\})$, we have $\mu_{\overline{\mbfs{\beta}}} < \infty$.
\end{cm}
\begin{cpf}
We have $\lim_{\mu\to\infty} \overline{\mbfs{\beta}}_{\mu}  = -\mbfs{d}$. 
By Lemma~\ref{lem:Implied1}, we know $-\mbfs{d}\in G^{\circ}$. 
The continuity of $\mu \mapsto \mbfs{a}{}^\top \Gamma(\overline{\mbfs{\beta}}_{\mu}) + \mbfs{d}{}^\top \overline{\mbfs{\beta}}_{\mu} $ then implies that $\mu_{\overline{\mbfs{\beta}}} < \infty$. 
\end{cpf}

\begin{cm}\label{claim3}
For each $\overline{\mbfs{\beta}} \in  D^m \setminus (G^{\circ} \cup \{\mbfs{d}\})$, we have $\mbfs{a}{}^\top\Gamma(\overline{\mbfs{\beta}}_{\mu_{\overline{\mbfs{\beta}}}})+\mbfs{d}{}^\top \overline{\mbfs{\beta}}_{\mu_{\overline{\mbfs{\beta}}}} = 0$.
\end{cm}
\begin{cpf}
By the definition of $\mu_{\overline{\mbfs{\beta}}}$ and the continuity of $\Gamma$, we have $\mbfs{a}{}^\top\Gamma(\overline{\mbfs{\beta}}_{\mu_{\overline{\mbfs{\beta}}}})+\mbfs{d}{}^\top \overline{\mbfs{\beta}}_{\mu_{\overline{\mbfs{\beta}}}} \geq 0$.
Assume that $\mbfs{a}{}^\top\Gamma(\overline{\mbfs{\beta}}_{\mu_{\overline{\mbfs{\beta}}}})+\mbfs{d}{}^\top \overline{\mbfs{\beta}}_{\mu_{\overline{\mbfs{\beta}}}} > 0$.
By the continuity of $\Gamma$, one can place an open ball around $\overline{\mbfs{\beta}}_{\mu_{\overline{\mbfs{\beta}}}}$ that is disjoint from $G^{\circ}$.
Such an open ball would let us increase $\mu_{\overline{\mbfs{\beta}}}$, which is a contradiction.
\end{cpf}

Define the function $\overline{\Gamma}:D^m \to \mbb{R}^n$ as follows: 
\[
\overline{\Gamma}({\mbfs{\beta}}) := 
\left\{
\begin{array}{l@{\hskip .5cm}l}
\Gamma({\mbfs{\beta}}) &\text{if}~{{\mbfs{\beta}}} \in G^{\circ}\\[.1cm]
\Psi_{{\mbfs{\beta}}}(\mu_{{\mbfs{\beta}}})&\text{if}~{{\mbfs{\beta}}} \in D^m \setminus ( G^{\circ} \cup \{\mbfs{d}\})\\[.1cm]
-\mbfs{a} &\text{if}~\mbfs{\beta} = \mbfs{d}
\end{array}
\right.
\]

\begin{cm}\label{claim4}
For each $\mbfs{\beta} \in  D^m $, we have $\|\overline{\Gamma}(\mbfs{\beta})\| \leq 1$.
\end{cm}
\begin{cpf}
If $\mbfs{\beta} \in G^{\circ}$, then the result holds because $\Gamma$ maps into $D^n$.
If $\mbfs{\beta} \in D^m \setminus ( G^{\circ} \cup \{\mbfs{d}\})$, then set $\varepsilon := \|\mbfs{\beta} - \mu_{\mbfs{\beta}} \cdot \mbfs{d}\|$.
Observe that
\begin{align*}
\|\Psi_{\mbfs{\beta}}(\mu_{\mbfs{\beta}})\|^2
= &  \varepsilon^2 - 2\varepsilon \mu_{\mbfs{\beta}} \cdot\mbfs{a}{}^\top\Gamma(\mbfs{\beta}_{\mu_{\mbfs{\beta}}}) + \mu_{\mbfs{\beta}}{}^2 \cdot\mbfs{a}{}^\top\mbfs{a}\\
 = &   \varepsilon^2 + 2\varepsilon\mu_{\mbfs{\beta}} \cdot\mbfs{d}{}^\top \mbfs{\beta}_{\mu_{\mbfs{\beta}}} + \mu_{\mbfs{\beta}}{}^2 \cdot\mbfs{a}{}^\top\mbfs{a} && \text{by Claim~\ref{claim3}}\\
\leq&   \varepsilon^2 + 2\varepsilon\mu_{\mbfs{\beta}} \cdot\mbfs{d}{}^\top \mbfs{\beta}_{\mu_{\mbfs{\beta}}} + \mu_{\mbfs{\beta}}{}^2\cdot \mbfs{d}{}^\top\mbfs{d} && \text{because $\|\mbfs{a}\| \leq \|\mbfs{d}\| $ }\\
=&  \varepsilon^2 \cdot\mbfs{\beta}_{\mu_{\mbfs{\beta}}}{}^\top \mbfs{\beta}_{\mu_{\mbfs{\beta}}} + 2\varepsilon\mu_{\mbfs{\beta}}\cdot \mbfs{d}{}^\top\mbfs{\beta}_{\mu_{\mbfs{\beta}}} + \mu_{\mbfs{\beta}}{}^2 \cdot\mbfs{d}{}^\top\mbfs{d} && \text{because $\|\mbfs{\beta}_{\mu_{\mbfs{\beta}}}\| =1 $}\\
=&  \|\varepsilon \cdot \mbfs{\beta}_{\mu_{\mbfs{\beta}}} + \mu_{\mbfs{\beta}}\cdot \mbfs{d}\|^2\\
=&  \|\mbfs{\beta}\|^2\\
=&  1.
\end{align*}
If $\mbfs{\beta} = \mbfs{d}$, then the result holds because $\|\mbfs{a}\| \le 1$.
\end{cpf}

\begin{cm}\label{claim5}
For each $\overline{\mbfs{\beta}} \in  D^m \setminus G^{\circ}$, the inequality 
\(
-\overline{\Gamma}(\overline{\mbfs{\beta}}){}^\top \mbfs{x} + \overline{\mbfs{\beta}}{}^\top \mbfs{y} \le 0
\)
is implied by the equation $\mbfs{a}{}^\top \mbfs{x}+\mbfs{d}{}^\top\mbfs{y} = -1$ and inequalities of the form $-\Gamma(\mbfs{\beta}){}^\top \mbfs{x} + \mbfs{\beta}{}^\top \mbfs{y} \le 0$, where $\mbfs{\beta} \in G^{\circ}$.
\end{cm}

\begin{cpf}
First, assume $\overline{\mbfs{\beta}} = \mbfs{d}$.
The inequality $-\overline{\Gamma}(\mbfs{d}){}^\top \mbfs{x} + \mbfs{d}{}^\top \mbfs{y} \le 0$ is equivalent to $\mbfs{a}{}^\top \mbfs{x} + \mbfs{d}{}^\top \mbfs{y} \le 0$, which is implied by $\mbfs{a}{}^\top \mbfs{x}+\mbfs{d}{}^\top\mbfs{y} = -1$.

Now, assume $\overline{\mbfs{\beta}}  \neq \mbfs{d}$.
Observe that
\begin{equation}\label{eqImplied}
(-\Psi_{\overline{\mbfs{\beta}}}(\mu_{\overline{\mbfs{\beta}}}),~ \overline{\mbfs{\beta}}) =  \|\overline{\mbfs{\beta}} - \mu_{\overline{\mbfs{\beta}}} \cdot \mbfs{d}\| \cdot (-\Gamma(\overline{\mbfs{\beta}}_{\mu_{\overline{\mbfs{\beta}}}}),~ \overline{\mbfs{\beta}}_{\mu_{\overline{\mbfs{\beta}}}}) + \mu_{\overline{\mbfs{\beta}}}\cdot \left(\mbfs{a}, \mbfs{d}\right).
\end{equation}
Thus, $-\Psi_{\overline{\mbfs{\beta}}}(\mu_{\overline{\mbfs{\beta}}})^\top \mbfs{x} + \overline{\mbfs{\beta}}{}^\top \mbfs{y} \le 0$ is implied by $\mbfs{a}{}^\top \mbfs{x}+\mbfs{d}{}^\top\mbfs{y} = -1$ and $-\Gamma(\overline{\mbfs{\beta}}_{\mu_{\overline{\mbfs{\beta}}}}){}^\top \mbfs{x} + \overline{\mbfs{\beta}}_{\mu_{\overline{\mbfs{\beta}}}}{}^\top \mbfs{y} \le 0$. 
For $\delta > 0$, we have $\mbfs{\beta}_{\mu_{\overline{\mbfs{\beta}}} + \delta} \in G^{\circ}$ by the supremum definition of $\mu_{\overline{\mbfs{\beta}}}$.
Thus, we have the following, where $\delta \to 0$ from the right:
\[ 
\lim_{\delta \to 0} (-\Gamma(\mbfs{\beta}_{\mu_{\overline{\mbfs{\beta}}} + \delta}),~\mbfs{\beta}_{\mu_{\overline{\mbfs{\beta}}} + \delta}) = (-\Gamma(\overline{\mbfs{\beta}}_{\mu_{\overline{\mbfs{\beta}}}}),~\overline{\mbfs{\beta}}_{\mu_{\overline{\mbfs{\beta}}}}).
\]
Hence, $-\overline{\Gamma}(\overline{\mbfs{\beta}}){}^\top \mbfs{x} + \overline{\mbfs{\beta}}{}^\top \mbfs{y} \le 0$ is implied by the equation $\mbfs{a}{}^\top \mbfs{x}+\mbfs{d}{}^\top\mbfs{y} = -1$ and inequalities of the form $-\Gamma(\mbfs{\beta}){}^\top \mbfs{x} + \mbfs{\beta}{}^\top \mbfs{y} \le 0$, where $\mbfs{\beta} \in G^{\circ}$.
\end{cpf}
Claims~\ref{claim4} and~\ref{claim5} complete the proof of Lemma~\ref{lemma:patching}.
\end{proof}

We are now prepared to prove Theorem~\ref{thm:Both} when $\|\mbfs{a}\| \le \|\mbfs{d}\|\ = 1$.

\begin{lemma}[Theorem~\ref{thm:Both} for $\|\mbfs{a}\| \le \|\mbfs{d}\|\ = 1$]\label{lemma:necessary_easy}
Assume $\|\mbfs{a}\| \le \|\mbfs{d}\|\ = 1$.
If $K\subseteq H$ is a maximal $Q_g$-free set, then $K = C^G_{\Gamma} \cap H$ for some $\Gamma : D^m \to D^n$ such that $C_{\Gamma}$ is a full-dimensional maximal $Q_h$-free set.
Furthermore, if $\Gamma$ satisfies this conclusion, then $\mbfs{a} \neq \Gamma(-\mbfs{d})$.
\end{lemma}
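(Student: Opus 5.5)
Start from Lemma~\ref{sliceofCgamma}: since $K$ is a full-dimensional maximal $Q_g$-free set, there is a $\Gamma:D^m\to D^n$ with $C_\Gamma$ a full-dimensional maximal $Q_h$-free set and $K = C_\Gamma\cap H$. By Theorem~\ref{thmHQuad}, $\Gamma$ is non-expansive. Everything below concerns this $\Gamma$. The plan is to prove the second assertion first and then use it to pass from $C_\Gamma \cap H$ to $C^G_\Gamma\cap H$.

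\textbf{Step 1: $\mbfs{a}\neq\Gamma(-\mbfs{d})$.} I will argue this for any $\Gamma$ with $K = C_\Gamma\cap H$ and also for any $\Gamma$ with $K = C^G_\Gamma\cap H$, since the "furthermore" clause refers to $\Gamma$ satisfying the conclusion. Suppose $\Gamma(-\mbfs{d})=\mbfs{a}$. Then $\mbfs{a}{}^\top\Gamma(-\mbfs{d}) - \mbfs{d}{}^\top\mbfs{d} = \|\mbfs{a}\|^2-1\le 0$, so $-\mbfs{d}\in G\subseteq D^m$. Hence the inequality $-\mbfs{a}{}^\top\mbfs{x}-\mbfs{d}{}^\top\mbfs{y}\le 0$ appears in the description of $C^G_\Gamma$, and a fortiori of $C_\Gamma$. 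On $H$ the left-hand side equals $1$, so $C^G_\Gamma\cap H=\emptyset$. This contradicts the full-dimensionality of $K$; note that maximal $Q_g$-free sets are nonempty, since e.g.\ a closed half-space of $H$ avoiding the relative interior meeting $Q_g$ exists.

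\textbf{Step 2: $K = C^G_\Gamma\cap H$.} We have $G\subseteq D^m$, so $K = C_\Gamma\cap H\subseteq C^G_\Gamma\cap H$. By Step 1, $\mbfs{a}\neq\Gamma(-\mbfs{d})$, so Lemma~\ref{lemma:patching} shows that $C^G_\Gamma\cap H$ is $Q_g$-free. It is a closed convex subset of $H$ containing the full-dimensional set $K$, so it is full-dimensional. Maximality of $K$ then forces equality.

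\textbf{Remaining point.} The main subtlety is Step 1's treatment of "if $\Gamma$ satisfies this conclusion" for an arbitrary such $\Gamma$, not just the one from Lemma~\ref{sliceofCgamma}. The argument above only uses $-\mbfs{d}\in G$ whenever $\Gamma(-\mbfs{d})=\mbfs{a}$, together with the nonemptiness of $K$, so it works for any $\Gamma$. The other place to be careful is the nonemptiness/full-dimensionality of $K$. Since the lemma concerns maximal $Q_g$-free sets in the full-dimensional sense of Theorem~\ref{thm:Both}, I will take that as given, as the paper does.
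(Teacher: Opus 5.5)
Your proposal is correct and follows the paper's proof essentially step for step. You obtain $\Gamma$ from Lemma~\ref{sliceofCgamma}, rule out $\mbfs{a}=\Gamma(-\mbfs{d})$ (it would give $-\mbfs{d}\in G$, whose inequality fails everywhere on $H$, forcing $K=\emptyset$), and conclude $K=C^G_\Gamma\cap H$ from Lemma~\ref{lemma:patching} together with the maximality of $K$. Your parenthetical about nonemptiness of maximal $Q_g$-free sets is unnecessary and, as phrased, not coherent; as you yourself note, full-dimensionality of $K$ is the standing assumption already needed to invoke Lemma~\ref{sliceofCgamma}.
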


\begin{proof}
By Lemma \ref{sliceofCgamma}, there exists a function $\Gamma:D^m\to D^n$ such that $C_{\Gamma}$ is a full-dimensional maximal $Q_h$-free set, and
$K = C_{\Gamma} \cap H $.

Note that $\mbfs{a} \neq \Gamma(-\mbfs{d})$.
Otherwise, $-\mbfs{a}{}^\top\mbfs{x}-\mbfs{d}{}^\top \mbfs{x} \le 0$ is valid for $C^G_{\Gamma}$ because $-\mbfs{d} \in G$, yet invalid for $H$.
Thus, we would have $K = C^G_{\Gamma} \cap H = \emptyset$, which contradicts that $K$ is full-dimensional.

The inclusion $K = C_{\Gamma} \cap H \subseteq C^G_{\Gamma} \cap H$ follows because $G \subseteq D^m$.
By Lemma \ref{lemma:patching}, $C^G_{\Gamma}\cap H$ is $Q_g$-free. 
As $K$ is maximal, we conclude that $K = C^G_{\Gamma}\cap H$.
\end{proof}
%

\subsection{A proof of Theorem~\ref{thm:Both} when \texorpdfstring{$\|\mbfs{d}\| < \|\mbfs{a}\|\ = 1$}{}.}\label{sec:HardNecessary}

In our proof of Theorem~\ref{thm:Both} when $\|\mbfs{a}\| \le \|\mbfs{d}\|\ = 1$, we relied on the set $G^{\circ}$ defined in \eqref{defnGcirc}.
If $\|\mbfs{a}\| \le \|\mbfs{d}\|\ = 1$ and $\mbfs{a} \neq \Gamma(-\mbfs{d})$, then $G^{\circ} \neq \emptyset$ as it contains $-\mbfs{d}$.
However, if $\|\mbfs{d}\| < \|\mbfs{a}\|\ = 1$, then $G^{\circ}$ may be empty even when $C^G_{\Gamma} \cap H$ is maximal $Q_g$-free; see, e.g., Figure~\ref{example:hardcase}.
Due to this distinction, our proofs have a different flavor when $\|\mbfs{d}\| < \|\mbfs{a}\|\ = 1$.
For instance, in our proof of Theorem~\ref{thm:Both} for $\|\mbfs{d}\| < \|\mbfs{a}\|\ = 1$, we adapt a technique from \cite{MS2022}: If we assume to the contrary that there exists some $\mbfs{\beta} \in D^m \setminus G$, then we can `push out' the $-\Gamma(\mbfs{\beta}){}^\top\mbfs{x}+\mbfs{\beta}{}^\top\mbfs{y} \le 0$, which is valid for $C_{\Gamma} \cap H$, by some offset $\Lambda (\mbfs{\beta}) > 0$ and maintain $Q_g$-freeness.

\begin{figure}
\begin{center}
\begin{tikzpicture}[scale = .4,every node/.style={font=\normalsize},
declare function={
PX(\x,\y,\z) = \x - .75*\y ;
PY(\x,\y,\z) = -.15*\x-.25*\y + \z;
} 
]

\draw[opacity = .15]({PX(0,0,0)}, {PY(0,0,0)}) to ({PX(0,0,1.55)}, {PY(0,0,1.55)});
\draw[opacity = .15]({PX(0,0,0)}, {PY(0,0,0)}) to ({PX(1.25,0,0)}, {PY(1.25,0,0)});
\draw[opacity = .15]({PX(0,0,0)}, {PY(0,0,0)}) to ({PX(0,1,0)}, {PY(0,1,0)});

\draw[draw = none] ({PX(-2,2.15,-2)}, {PY(-2,2.15,-2)}) to ({PX(-2,2.15,2)}, {PY(-2,2.15,2)}) to ({PX(-2,-2.15,2)}, {PY(-2,-2.15,2)}) to ({PX(2.15,-2.15,2)}, {PY(2.15,-2.15,2)}) to ({PX(2.15,-2.15,-2)}, {PY(2.15,-2.15,-2)}) to ({PX(2.15,2.15,-2)}, {PY(2.15,2.15,-2)}) to cycle;

\begin{scope}
\color{blue!75}
\pgfsetstrokeopacity{.65}
\foreach \z in {0, 0.05, ..., 3}{
\pgfpathellipse{\pgfpointxy{{PX(0,0,\z)}}{PY(0,0,\z)}}{\pgfpointxy{1*\z}{-.15*\z}}{\pgfpointxy{.75*\z}{.25*\z}} 
}
\pgfusepath{stroke}
\end{scope}%

\begin{scope}
\color{green!70!black!30}
\pgfsetstrokeopacity{.85}
\foreach \z in {1.75, 1.74, ..., 0.05}{
\pgfpathellipse{\pgfpointxy{{PX(0,0,-\z)}}{PY(0,0,-\z)}}{\pgfpointxy{sqrt(1+\z*\z)}{-.15*sqrt(1+\z*\z)}}{\pgfpointxy{.75*sqrt(1+\z*\z)}{.25*sqrt(1+\z*\z)}} 
}

\foreach \z in {0, 0.05, ..., 2.65}{
\pgfpathellipse{\pgfpointxy{{PX(0,0,\z)}}{PY(0,0,\z)}}{\pgfpointxy{sqrt(1+\z*\z)}{-.15*sqrt(1+\z*\z)}}{\pgfpointxy{.75*sqrt(1+\z*\z)}{.25*sqrt(1+\z*\z)}} 
}
\pgfusepath{stroke}
\end{scope}%

\begin{scope}
\color{green!70!black!80}
\pgfsetstrokeopacity{1}
\foreach \z in {2.65}{
\pgfpathellipse{\pgfpointxy{{PX(0,0,\z)}}{PY(0,0,\z)}}{\pgfpointxy{sqrt(1+\z*\z)}{-.15*sqrt(1+\z*\z)}}{\pgfpointxy{.75*sqrt(1+\z*\z)}{.25*sqrt(1+\z*\z)}} 
}
\pgfusepath{stroke}
\end{scope}%

\begin{scope}
\clip[rotate = 1] ({PX(0,0,3.4)}, {PY(0,0,3.4)}) ellipse (4.6 cm and 1.57 cm);
\color{blue!75}
\pgfsetstrokeopacity{.65}
\foreach \z in {0, 0.05, ..., 3}{
\pgfpathellipse{\pgfpointxy{{PX(0,0,\z)}}{PY(0,0,\z)}}{\pgfpointxy{1*\z}{-.15*\z}}{\pgfpointxy{.75*\z}{.25*\z}} 
}
\pgfusepath{stroke}
\end{scope}%

\begin{scope}

\color{blue!15}
\pgfsetfillopacity{.65}
\foreach \z in {2.95, 3}{
\pgfpathellipse{\pgfpointxy{{PX(0,0,\z)}}{PY(0,0,\z)}}{\pgfpointxy{1*\z}{-.15*\z}}{\pgfpointxy{.75*\z}{.25*\z}} 
}
\pgfusepath{fill}
\end{scope}

\begin{scope}
\color{blue}
\pgfsetstrokeopacity{1}
\foreach \z in {3}{
\pgfpathellipse{\pgfpointxy{{PX(0,0,\z)}}{PY(0,0,\z)}}{\pgfpointxy{1*\z}{-.15*\z}}{\pgfpointxy{.75*\z}{.25*\z}} 
}
\pgfusepath{stroke}
\end{scope}%

\draw[->]({PX(0,0,2)}, {PY(0,0,2)}) to node[pos = 1, left]{$x_2$} ({PX(0,0,4.5)}, {PY(0,0,4.5)});
\draw[->]({PX(1,0,0)}, {PY(1,0,0)}) to node[pos = 1, right]{$y_1$}({PX(3,0,0)}, {PY(3,0,0)});
\draw[->]({PX(0,1,0)}, {PY(0,1,0)}) to node[pos = 1, left]{$y_2$} ({PX(0,3,0)}, {PY(0,3,0)});

\end{tikzpicture}
\end{center}
\caption{%
This figure illustrates $C_{\Gamma} \cap H$ for $\mbfs{d} := (0,0)$, $\mbfs{a} := (1,0)$, and $\Gamma(\mbfs{\beta}) := (0,1)$ for all $\mbfs{\beta} \in D^2$.
The boundary of $Q_g$ is in green, and the boundary of $C_{\Gamma} \cap H$ is in blue.
The ambient space of the figure is $H$.
The set $C_{\Gamma} \cap H$ is a maximal $Q_g$-free set, and $G^{\circ} = \emptyset$.}\label{example:hardcase}
\end{figure}

\begin{lemma}[Theorem~\ref{thm:Both} for $\|\mbfs{d}\| < \|\mbfs{a}\|\ = 1$]\label{lemma:hardcase_necessary}
Assume $\|\mbfs{d}\| < \|\mbfs{a}\|\ = 1$.
If $K\subseteq H$ is a maximal $Q_g$-free set, then $K = C^G_{\Gamma} \cap H$ for some $\Gamma : D^m \to D^n$ such that $C_{\Gamma}$ is a full-dimensional maximal $Q_h$-free set.
Furthermore, if $\Gamma$ satisfies this conclusion, then $D^m = G$.
\end{lemma}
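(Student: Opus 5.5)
We argue by contradiction and prove both parts at once. By Lemma~\ref{sliceofCgamma}, there is $\Gamma:D^m\to D^n$ such that $C_\Gamma$ is a full-dimensional maximal $Q_h$-free set and $K=C_\Gamma\cap H$. Both claims then reduce to one statement: if $\Gamma$ is any such function with $K=C_\Gamma\cap H$ maximal $Q_g$-free, then $G=D^m$. Indeed, $G=D^m$ gives $C^G_\Gamma=C_\Gamma$, hence $K=C^G_\Gamma\cap H$. So suppose some $\overline{\mbfs{\beta}}\in D^m\setminus G$, i.e.\ $s(\overline{\mbfs{\beta}})>0$, where
\[
s(\mbfs{\beta}):=\mbfs{a}{}^\top\Gamma(\mbfs{\beta})+\mbfs{d}{}^\top\mbfs{\beta}.
\]
Following the strategy of \cite{MS2022}, we push out the inequalities indexed near $\overline{\mbfs{\beta}}$.

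\emph{Step 1 (offset).} Let $\Lambda:D^m\to\mbb{R}_{\ge 0}$ be a continuous function with $\Lambda(\overline{\mbfs{\beta}})>0$ and
\[
0\le \Lambda(\mbfs{\beta})\le \frac{2\max\{0,s(\mbfs{\beta})\}}{1-\|\mbfs{d}\|^2}\quad\text{for all }\mbfs{\beta}\in D^m .
\]
For instance, take $\Lambda(\mbfs{\beta}) := \max\{0,s(\mbfs{\beta})\}/(1-\|\mbfs{d}\|^2)$; it is continuous because $\Gamma$ is non-expansive, and it is well defined because $\|\mbfs{d}\|<1$. Set
\[
N:=\{(\mbfs{x},\mbfs{y})\in H:\ -\Gamma(\mbfs{\beta}){}^\top\mbfs{x}+\mbfs{\beta}{}^\top\mbfs{y}\le\Lambda(\mbfs{\beta})~~\forall\,\mbfs{\beta}\in D^m\}\supseteq K .
\]
On $H$ we have $\Lambda(\mbfs{\beta})=-\Lambda(\mbfs{\beta})(\mbfs{a}{}^\top\mbfs{x}+\mbfs{d}{}^\top\mbfs{y})$. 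Hence each defining inequality of $N$ is equivalent on $H$ to the homogeneous inequality
\[
-(\Gamma(\mbfs{\beta})-\Lambda(\mbfs{\beta})\mbfs{a}){}^\top\mbfs{x}+(\mbfs{\beta}+\Lambda(\mbfs{\beta})\mbfs{d}){}^\top\mbfs{y}\le 0 .
\]
Expanding the squared norms, using $\|\mbfs{a}\|=1$, gives
\[
\|\Gamma(\mbfs{\beta})-\Lambda\mbfs{a}\|\le\|\mbfs{\beta}+\Lambda\mbfs{d}\|
\iff -2\Lambda s(\mbfs{\beta})+\Lambda^2(1-\|\mbfs{d}\|^2)\le 0 ,
\]
which holds by the choice of $\Lambda$. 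Here $\|\mbfs{d}\|<\|\mbfs{a}\|$ is used essentially.

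\emph{Step 2 ($N$ is $Q_g$-free).} Every homogeneous inequality $-\mbfs{\gamma}'{}^\top\mbfs{x}+\mbfs{\beta}'{}^\top\mbfs{y}\le 0$ with $\|\mbfs{\gamma}'\|\le\|\mbfs{\beta}'\|$ is satisfied with ``$\ge$'' by every point of $Q_h$, exactly as in the argument inside the proof of Lemma~\ref{lemma:patching}. Take $(\mbfs{x},\mbfs{y})\in Q_g$ and set $\mbfs{\beta}:=\mbfs{y}/\|\mbfs{y}\|$; note $\mbfs{y}\ne\mbfs{0}$ because $(\mbfs{0},\mbfs{0})\notin H$. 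Then the inequality indexed by this $\mbfs{\beta}$ is not strict, which should give $Q_g\cap\relint(N)=\emptyset$. The one gap is that $\mbfs{\beta}+\Lambda\mbfs{d}$ may not be parallel to $\mbfs{y}$. I would handle this with a Cauchy--Schwarz estimate: either $\|\mbfs{\beta}+\Lambda\mbfs{d}\|\,\|\mbfs{y}\|\ge\mbfs{y}{}^\top(\mbfs{\beta}+\Lambda\mbfs{d})$ combined with $\mbfs{a}{}^\top\mbfs{x}+\mbfs{d}{}^\top\mbfs{y}=-1$, or else shrinking $\Lambda$ further.

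\emph{Step 3 (strict containment).} This is the step I expect to be the main obstacle. Maximality of $K$ forces $N=K$, and we must contradict this. The natural witness is $\mbfs{v}:=(\Gamma(\overline{\mbfs{\beta}}),\overline{\mbfs{\beta}})\in Q_h$. By non-expansiveness, $\mbfs{v}\in C_\Gamma$ and $\mbfs{v}$ is tight for the $\overline{\mbfs{\beta}}$ inequality. However, $\mbfs{a}{}^\top\Gamma(\overline{\mbfs{\beta}})+\mbfs{d}{}^\top\overline{\mbfs{\beta}}=s(\overline{\mbfs{\beta}})>0$, so $\mbfs{v}$ lies on the wrong side of $H$.

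I would first combine $\mbfs{v}$ with an interior point $\mbfs{p}$ of $\cone(K)$ satisfying $\mbfs{a}{}^\top\mbfs{p}_x+\mbfs{d}{}^\top\mbfs{p}_y<0$, as in Lemma~\ref{lem:fulldim}, and then perturb $\mbfs{p}+t\mbfs{v}$ slightly beyond the facet indexed near $\overline{\mbfs{\beta}}$. The goal is a point that violates only inequalities with $\Lambda>0$, and violates them by less than the offset. After rescaling into $H$, this point lies in $N\setminus K$.

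If this direct construction is delicate, the fallback is a tilting argument. Assume $N=K$ and apply Lemma~\ref{lem:Wavy2} and Theorem~\ref{Rockafellarpatched}, with the bounded multipliers of Lemma~\ref{lemma:lambdabounded}. Then the pushed-out inequality for $\overline{\mbfs{\beta}}$ would be a conic combination of the original inequalities and $(\mbfs{a},\mbfs{d})$ with total offset $0$. This should contradict $\Lambda(\overline{\mbfs{\beta}})>0$ together with the tightness of $\mbfs{v}$.
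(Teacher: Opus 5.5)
Your plan matches the paper's: get $K=C_\Gamma\cap H$ from Lemma~\ref{sliceofCgamma}, push out the inequalities with $s(\mbfs{\beta})>0$ by an offset proportional to $\max\{0,s(\mbfs{\beta})\}$, then show the result is $Q_g$-free and strictly larger. However, Step~3 is missing its key idea, and neither route you sketch works.

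\textbf{Step~3.} The witness $\mbfs{v}=(\Gamma(\overline{\mbfs{\beta}}),\overline{\mbfs{\beta}})$ lies in the cone $C_\Gamma$, so $\mbfs{p}+t\mbfs{v}$ never leaves $C_\Gamma$. Worse, $s(\overline{\mbfs{\beta}})>0$ means adding $t\mbfs{v}$ raises $\mbfs{a}^\top\mbfs{x}+\mbfs{d}^\top\mbfs{y}$ toward $0$. Rescaling back into $H$ then needs a factor larger than $1$ that grows with $t$, which amplifies any violation while the offsets $\Lambda$ stay fixed. The fallback is vacuous. The pushed-out inequality has the same normal as the original $\overline{\mbfs{\beta}}$-inequality and a larger right-hand side, so it is valid for $K$ whether or not $N=K$; representing it yields no contradiction. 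What $N=K$ would actually give is validity of the \emph{original} inequality on $N$, and that is what you must refute.

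The missing idea is to move in the direction $(-\mbfs{a},\mbfs{d})$. Take $(\mbfs{x}_0,\mbfs{y}_0)\in C_\Gamma\cap H$ and the points $(\mbfs{x}_0-\rho\mbfs{a},\mbfs{y}_0+\rho\mbfs{d})$. Along this ray the value of the $\mbfs{\beta}$-inequality increases by exactly $\rho\, s(\mbfs{\beta})$, while $\mbfs{a}^\top\mbfs{x}+\mbfs{d}^\top\mbfs{y}$ becomes $-1-\rho(1-\|\mbfs{d}\|^2)$. So only inequalities with $s(\mbfs{\beta})>0$ can break, and by an amount proportional to $s(\mbfs{\beta})$. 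Go a fixed $\delta$ beyond the exit parameter $\rho_0<\infty$ and rescale into $H$ by $\lambda=1/(1+(\rho_0+\delta)(1-\|\mbfs{d}\|^2))<1$. The resulting point lies outside $C_\Gamma$, and its violations are at most $\delta\max\{0,s(\mbfs{\beta})\}$. This is $\le\Lambda(\mbfs{\beta})$ once $\delta$ is at most the (minimal) factor multiplying $\max\{0,s(\mbfs{\beta})\}$ in $\Lambda$; this is Claim~\ref{cm:Qgstrict}, and it is where $\|\mbfs{d}\|<1$ is really used.

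\textbf{Step~2.} This step also needs repair. The Step~1 condition $\|\Gamma(\mbfs{\beta})-\Lambda\mbfs{a}\|\le\|\mbfs{\beta}+\Lambda\mbfs{d}\|$ only controls points of $Q_h$ whose $\mbfs{y}$ is a positive multiple of $\mbfs{\beta}+\Lambda\mbfs{d}$. So the first sentence of your Step~2 is false as stated. The Cauchy--Schwarz bound $\|\mbfs{\beta}+\Lambda\mbfs{d}\|\,\|\mbfs{y}\|\ge\mbfs{y}^\top(\mbfs{\beta}+\Lambda\mbfs{d})$ also points the wrong way, since you need a lower bound on $\mbfs{y}^\top(\mbfs{\beta}+\Lambda\mbfs{d})$.

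The argument that works (Claim~\ref{cm:Qgfree}) takes $\mbfs{\beta}_0=\mbfs{y}_0/\|\mbfs{y}_0\|$. It uses $\mbfs{a}^\top\mbfs{x}_0=-1-\|\mbfs{y}_0\|\,\mbfs{d}^\top\mbfs{\beta}_0$ together with Cauchy--Schwarz on the $\mbfs{x}$-part. It then needs $\|\Gamma(\mbfs{\beta}_0)-\Lambda\mbfs{a}\|\le 1+\Lambda\,\mbfs{d}^\top\mbfs{\beta}_0$, i.e.\ $\Lambda(\mbfs{\beta})\le 2\max\{0,s(\mbfs{\beta})\}/(1-(\mbfs{d}^\top\mbfs{\beta})^2)$, with $(\mbfs{d}^\top\mbfs{\beta})^2$ rather than $\|\mbfs{d}\|^2$. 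Your choice $\max\{0,s\}/(1-\|\mbfs{d}\|^2)$ violates this whenever $s(\mbfs{\beta})>0$ and $(\mbfs{d}^\top\mbfs{\beta})^2<2\|\mbfs{d}\|^2-1$, so you must shrink it; for instance $\Lambda=\max\{0,s\}$ works. Alternatively, you could keep your $\Lambda$ and show that $\mbfs{\beta}\mapsto(\mbfs{\beta}+\Lambda(\mbfs{\beta})\mbfs{d})/\|\mbfs{\beta}+\Lambda(\mbfs{\beta})\mbfs{d}\|$ is onto $D^m$, by a homotopy-to-the-identity argument. As written, however, Step~2 is incomplete.
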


\begin{proof}
By Lemma~\ref{sliceofCgamma}, there exists a function $\Gamma : D^m \to D^n$ such that $C_{\Gamma}$ is a full-dimensional maximal $Q_h$-free set and $K =  C_{\Gamma} \cap H = C^{D^m}_{\Gamma} \cap H$.
We claim that any such $\Gamma$ satisfies $G = D^m$; proving this will prove the lemma.

Assume to the contrary that $G \subsetneq D^m $.
We construct a $Q_g$-free set $C^+_\Gamma \cap H$ that is a strict superset of $K$, which contradicts that $K$ is maximal.

For each $\mbfs{\beta} \in D^m$, define
\[
\Lambda(\mbfs{\beta}) := \max \left\{0, ~ 2 \cdot\frac{\mbfs{a}{}^\top \Gamma(\mbfs{\beta}) + \mbfs{d}{}^\top \mbfs{\beta}}{1-(\mbfs{d}{}^\top \mbfs{\beta})^2}\right\}.
\]
The number $\Lambda(\mbfs{\beta})$ is well-defined because $\|\mbfs{d}\| < 1$. 
Define
\[
C^+_\Gamma := \{(\mbfs{x},\mbfs{y}) : - \Gamma(\mbfs{\beta}){}^\top \mbfs{x}  + \mbfs{\beta}{}^\top \mbfs{y} \le \Lambda(\mbfs{\beta}) ~~\forall ~\mbfs{\beta} \in D^m \}.
\]

\begin{cm}\label{cm:Qgfree}
The set $C^+_\Gamma \cap H$ is $Q_g$-free.
\end{cm}
Note that $C^+_\Gamma$ may not be $Q_h$-free, but in this case we are only concerned with the behavior on the slice.

\begin{cpf} 
Let $(\mbfs{x}_0, \mbfs{y}_0)\in Q_g$.
We prove
\[
-\Gamma(\mbfs{\beta_0}){}^\top \mbfs{x}_0 + \mbfs{\beta}_0{}^\top \mbfs{y}_0 \geq  \Lambda(\mbfs{\beta}_0), 
\]
where $\mbfs{\beta}_0 := \mbfs{y}_0 / \| \mbfs{y}_0\|$. 
Note that $\mbfs{y}_0 \neq \mbfs{0}$, as otherwise $\mbfs{x}_0 = \mbfs{0}$ because $(\mbfs{x}_0, \mbfs{y}_0) \in Q_g \subseteq Q_h$; however, $(\mbfs{0}, \mbfs{0}) \not\in H \supseteq Q_g$.

Assume to the contrary that 
\[
-\Gamma(\mbfs{\beta_0}){}^\top \mbfs{x}_0 + \mbfs{\beta}_0{}^\top \mbfs{y}_0 <  \Lambda(\mbfs{\beta}_0),
\]
or equivalently, because $\mbfs{\beta}_0^\top \mbfs{y}_0 = \|\mbfs{y}_0\|$, that
\[
\Gamma(\mbfs{\beta}_0)^\top \mbfs{x}_0 > \|\mbfs{y}_0\| - \Lambda(\mbfs{\beta}_0).
\]
If $\mbfs{\beta}_0 \in G$, then $\Lambda(\mbfs{\beta}_0) =0 $.
By the Cauchy-Schwarz Inequality, we have $ \Gamma(\mbfs{\beta}_0)^\top \mbfs{x}_0  \le \|\mbfs{x}_0\|$.
Thus, $\|\mbfs{y}_0\| < \|\mbfs{x}_0\|$, which contradicts that $(\mbfs{x}_0, \mbfs{y}_0) \in Q_g \subseteq Q_h$.
Hence, $\mbfs{\beta}_0 \not\in G$.
Consequently,  $\Lambda(\mbfs{\beta}_0) > 0$.

As $(\mbfs{x}_0, \mbfs{y}_0)\in H$, we can write
\[
\mbfs{a}{}^\top \mbfs{x}_0 = -1 - \mbfs{d}{}^\top \mbfs{\beta}_0 \cdot \|\mbfs{y}_0\|.
\]
Thus,
\begin{align}
(\Gamma(\mbfs{\beta}_0)-\Lambda(\mbfs{\beta}_0)\cdot\mbfs{a})^\top \mbfs{x}_0 &= \Gamma(\mbfs{\beta}_0)^\top \mbfs{x}_0  - \Lambda(\mbfs{\beta}_0) \cdot\mbfs{a}{}^\top \mbfs{x}_0 \nonumber \\
& > \|\mbfs{y}_0\|-\Lambda(\mbfs{\beta}_0) - \Lambda(\mbfs{\beta}_0) \cdot \left(-1 - \mbfs{d}{}^\top \mbfs{\beta}_0\cdot\|\mbfs{y}_0\|\right)\nonumber \\
 & = \|\mbfs{y}_0\| (1 + \Lambda(\mbfs{\beta}_0)\cdot\mbfs{d}{}^\top \mbfs{\beta}_0 ). \label{rbeta0_last}
\end{align}

Additionally, by the Cauchy-Schwarz Inequality, we have
\[
(\Gamma(\mbfs{\beta}_0)-\Lambda(\mbfs{\beta}_0)\cdot \mbfs{a})^\top \mbfs{x}_0 \leq  \|\Gamma(\mbfs{\beta}_0)-\Lambda(\mbfs{\beta}_0)\cdot\mbfs{a}\| \cdot\|\mbfs{x}_0\| \leq \|\Gamma(\mbfs{\beta}_0)-\Lambda(\mbfs{\beta}_0)\cdot\mbfs{a}\|\cdot \|\mbfs{y}_0\|.
\]
Combining this with \eqref{rbeta0_last}, we get
\begin{equation}\label{contradictionineq}
\|\Gamma(\mbfs{\beta}_0)-\Lambda(\mbfs{\beta}_0)\cdot\mbfs{a}\| > 1 + \Lambda(\mbfs{\beta}_0 ) \cdot \mbfs{d}{}^\top \mbfs{\beta}_0.
\end{equation}

On the other hand, we have
\begin{align*}
  \|\Gamma(\mbfs{\beta}_0) - \Lambda(\mbfs{\beta}_0)\cdot \mbfs{a} \|^2
  &= 1 - 2\Lambda(\mbfs{\beta}_0)\cdot \mbfs{a}{}^\top \Gamma(\mbfs{\beta}_0)  + \Lambda(\mbfs{\beta}_0)^2 \\
  &= \left(\frac{1 + 2 \mbfs{a}{}^\top \Gamma(\mbfs{\beta}_0) \cdot \mbfs{d}{}^\top \mbfs{\beta}_0 + (\mbfs{d}{}^\top \mbfs{\beta}_0)^2}{1-(\mbfs{d}{}^\top \mbfs{\beta}_0)^2}\right)^2.
\end{align*}
As $1 - (\mbfs{a}{}^\top \Gamma(\mbfs{\beta}_0))^2 \ge 0$, the numerator of the latter fraction satisfies
\[ 
\begin{array}{rl}
& 1 + 2 \mbfs{a}{}^\top \Gamma(\mbfs{\beta}_0)\cdot\mbfs{d}{}^\top \mbfs{\beta}_0 + (\mbfs{d}{}^\top \mbfs{\beta}_0)^2\\[.15cm]
= & 1 - (\mbfs{a}{}^\top \Gamma(\mbfs{\beta}_0))^2 + (\mbfs{a}{}^\top \Gamma(\mbfs{\beta}_0) + \mbfs{d}{}^\top \mbfs{\beta}_0)^2 \geq 0. 
\end{array}
\]
Therefore,
\[ 
\|\Gamma(\mbfs{\beta}_0) - \Lambda(\mbfs{\beta}_0)\cdot \mbfs{a} \| = \frac{1 + 2 \mbfs{a}{}^\top \Gamma(\mbfs{\beta}_0) \cdot \mbfs{d}{}^\top \mbfs{\beta}_0 + (\mbfs{d}{}^\top \mbfs{\beta}_0)^2}{1-(\mbfs{d}{}^\top \mbfs{\beta}_0)^2}. 
\]
Finally, we see that
\begin{align*}
& \|\Gamma(\mbfs{\beta}_0) - \Lambda(\mbfs{\beta}_0)\cdot \mbfs{a} \| - \Lambda(\mbfs{\beta}_0 ) \cdot \mbfs{d}{}^\top \mbfs{\beta}_0 \\
=\, &  \frac{1 + 2 \mbfs{a}{}^\top \Gamma(\mbfs{\beta}_0) \cdot \mbfs{d}{}^\top \mbfs{\beta}_0 + (\mbfs{d}{}^\top \mbfs{\beta}_0)^2 - 2 \mbfs{d}{}^\top \mbfs{\beta}_0\cdot(\mbfs{a}{}^\top \Gamma(\mbfs{\beta}_0) + \mbfs{d}{}^\top \mbfs{\beta}_0) }{1-(\mbfs{d}{}^\top \mbfs{\beta}_0)^2}\\
=\, & 1.
\end{align*}
However, this contradicts \eqref{contradictionineq}. 
Thus, $C^+_\Gamma \cap H$ is $Q_g$-free.
\end{cpf}

\begin{cm}\label{cm:Qgstrict}
We have $C_\Gamma \cap H \subsetneq C^+_\Gamma \cap H$.
\end{cm}
\begin{cpf}
Take $(\mbfs{x}_0, \mbfs{y}_0) \in C_\Gamma \cap H$ and consider $(\mbfs{x}_0 - \rho \cdot\mbfs{a}, \mbfs{y}_0 + \rho \cdot \mbfs{d})$
for some $\rho \geq 0$.
Consider a valid inequality for $C_\Gamma$, $-\Gamma(\mbfs{\beta})^\top \mbfs{x}+ \mbfs{\beta}{}^\top \mbfs{y} \le 0$.
Evaluating this inequality at $(\mbfs{x}_0 - \rho\cdot \mbfs{a}, \mbfs{y}_0 + \rho \cdot\mbfs{d})$, we obtain
\[-\Gamma(\mbfs{\beta})^\top \mbfs{x}_0 + \rho\cdot\mbfs{a}{}^\top \Gamma(\mbfs{\beta}) + \mbfs{\beta}{}^\top \mbfs{y}_0  + \rho\cdot \mbfs{d}{}^\top \mbfs{\beta} \le 0 \]
which is
\begin{equation}\label{eq:modifiedineq}
\mbfs{\beta}{}^\top \mbfs{y}_0  + \rho \cdot (\mbfs{a}{}^\top \Gamma(\mbfs{\beta}) + \mbfs{d}{}^\top \mbfs{\beta}) \le \Gamma(\mbfs{\beta})^\top \mbfs{x}_0. 
\end{equation}
As $G\subsetneq D^m$, there exists $\mbfs{\beta}$ such that $\mbfs{a}{}^\top \Gamma(\mbfs{\beta}) + \mbfs{d}{}^\top \mbfs{\beta}> 0$. 
Thus, from \eqref{eq:modifiedineq}, there is $\rho \geq 0$ such that $(\mbfs{x}_0 - \rho\cdot \mbfs{a}, \mbfs{y}_0 + \rho \cdot\mbfs{d}) \not\in C_\Gamma$.
Let
\[
\rho_0 = \inf\{\rho\, :\, (\mbfs{x}_0 - \rho\cdot \mbfs{a}, \mbfs{y}_0 + \rho \cdot\mbfs{d}) \not\in C_\Gamma,\, \rho \geq 0\}.
\]
It must hold $(\mbfs{x}_0 - \rho_0 \cdot\mbfs{a}, \mbfs{y}_0 + \rho_0 \cdot\mbfs{d}) \in C_\Gamma$ as otherwise one could decrease $\rho_0$.

Define the numbers
\[
\begin{array}{rcl}
\delta &:=& \displaystyle\min\left\{ \frac{2}{1-(\mbfs{d}{}^\top \mbfs{\beta})^2}:\ \mbfs{\beta}\in D^m \right\}\\[.35cm]
\lambda &:=& \displaystyle\frac{1}{1 + (\rho_0 + \delta) (1 - \|\mbfs{d}\|^2)} .
\end{array}
\]
The number $\delta > 0$ is well-defined because $\|\mbfs{d}\| < 1$.
Also, $0 < \lambda < 1$.

We claim that
\[ 
\lambda \cdot (\mbfs{x}_0 - (\rho_0 + \delta)\cdot \mbfs{a}, \mbfs{y}_0 + (\rho_0 + \delta) \cdot\mbfs{ d}) \in (C^+_\Gamma \cap H) \setminus  (C_\Gamma \cap H).
\]
By definition of $\rho_0$, we have $(\mbfs{x}_0 - (\rho_0 + \delta)\cdot \mbfs{a}, \mbfs{y}_0 + (\rho_0 + \delta)\cdot \mbfs{ d})\not\in C_\Gamma$.
Thus, $\lambda \cdot (\mbfs{x}_0 - (\rho_0 + \delta) \cdot\mbfs{a}, \mbfs{y}_0 + (\rho_0 + \delta)\cdot \mbfs{ d})\not\in C_\Gamma$ because $C_\Gamma$ is a cone and $\lambda > 0$.

Using $(\mbfs{x}_0,\mbfs{y}_0) \in H$, we see that
\begin{align*}
& \lambda \cdot(\mbfs{a}{}^\top (\mbfs{x}_0 - (\rho_0 + \delta)\cdot \mbfs{a}) + \mbfs{d}{}^\top (\mbfs{y}_0 + (\rho_0 + \delta)\cdot \mbfs{ d})) \\
=\,& \lambda \cdot (-1 + (\rho_0 + \delta) \cdot(\|\mbfs{d}\|^2 - 1) )\\
=\, & -1.
\end{align*}
Thus, $\lambda \cdot (\mbfs{x}_0 - (\rho_0 + \delta)\cdot \mbfs{a}, \mbfs{y}_0 + (\rho_0 + \delta) \cdot\mbfs{ d})\in H$.

Finally, for membership in $C_\Gamma^+ $, note that
\begin{align*}
 & - \Gamma(\mbfs{\beta})^\top (\mbfs{x}_0 - (\rho_0 + \delta) \cdot\mbfs{a}) + \mbfs{\beta}{}^\top(\mbfs{y}_0 + (\rho_0 + \delta) \cdot\mbfs{ d}) \lambda - \Lambda(\mbfs{\beta}) \nonumber\\
\leq\, & \delta \lambda \cdot \left( \Gamma(\mbfs{\beta})^\top  \mbfs{a} + \mbfs{\beta}{}^\top \mbfs{ d}\right)  - \Lambda(\mbfs{\beta})\\
\leq\, & \delta \lambda \cdot \max\{0, \Gamma(\mbfs{\beta})^\top  \mbfs{a} + \mbfs{\beta}{}^\top \mbfs{ d}\} - 2 \cdot\frac{\max\{0,\mbfs{a}{}^\top \Gamma(\mbfs{\beta}) + \mbfs{d}{}^\top \mbfs{\beta}\}}{1-(\mbfs{d}{}^\top \mbfs{\beta})^2}\\
\leq\, &  \frac{2}{1-(\mbfs{d}{}^\top \mbfs{\beta})^2} \cdot \max\{0,\Gamma(\mbfs{\beta})^\top  \mbfs{a} + \mbfs{\beta}{}^\top \mbfs{ d}\} - 2 \cdot \frac{\max\{0,\mbfs{a}{}^\top \Gamma(\mbfs{\beta}) + \mbfs{d}{}^\top \mbfs{\beta}\}}{1-(\mbfs{d}{}^\top \mbfs{\beta})^2} \\
=\, & 0, \nonumber
\end{align*}
where the first inequality follows from $(\mbfs{x}_0 - \rho_0 \cdot\mbfs{a}, \mbfs{y}_0 + \rho_0 \cdot\mbfs{d}) \in C_\Gamma$, the second follows from the definition of $\Lambda(\mbfs{\beta})$, and the third follows from the definition of $\delta$ and $\lambda < 1$.
Thus, $\lambda \cdot (\mbfs{x}_0 - (\rho_0 + \delta)\cdot \mbfs{a}, \mbfs{y}_0 + (\rho_0 + \delta) \cdot\mbfs{ d})\in C^+_\Gamma \cap H$. 
\end{cpf}

By Claim \ref{cm:Qgfree}, $C^+_\Gamma \cap H$ is $Q_g$-free.
By Claim \ref{cm:Qgstrict}, $C_\Gamma \cap H \subsetneq C^+_\Gamma \cap H$.
Thus, $C_\Gamma \cap H$ is not a maximal $Q_g$-free set.
\end{proof}

\section{A sufficient condition for maximal $Q_g$-free sets}\label{secMaxH}

Mu\~{n}oz et al. developed a sufficient condition for full-dimensional maximal $S$-free sets for general $S$ in \cite[Theorem 1.4]{munoz2025characterization}.
We could project $C_{\Gamma} \cap H$ into an $(n+m-1)$-dimensional space and apply their result there, where $C_{\Gamma} \cap H$ is full-dimensional.
However, in light of Lemma \ref{sliceofCgamma}, it will be convenient to stay in $\mathbb{R}^n\times \mathbb{R}^m$ and maintain a perspective of $Q_g$-free sets as slices of $Q_h$-free sets; this preference prevents us from directly using their result. 
Instead, we modify their criterion to handle `full-dimensionality' in the $(n+m-1)$-dimensional hyperplane $H$.
In this section, we restate the necessary definitions and prove the necessary results needed in our modification.

We begin with the definition of an exposing sequence, which is the main ingredient in the characterization of \cite{munoz2025characterization}.
\begin{definition}[Exposing sequence]\label{def:exposingseq}
Let $C \subseteq \mbb{R}^d$ be a convex set, and let $\mbfs{\alpha}^\top \mbfs{x} \le \overline{\alpha}$, with $\mbfs{\alpha} \neq  \mbfs{0}$, be a valid inequality for $C$.
  A sequence $(\mbfs{x}_t)_{t=1}^\infty$ in $\mbb{R}^d$ is an {\bf exposing sequence for $ \mbfs{\alpha}{}^\top \mbfs{x} \le \overline{\alpha}$ with respect to $\bm C$} if 
  \[
  \lim_{t\to\infty}(\mbfs{\delta}_t, {\varepsilon_t}) = (\mbfs{\alpha}, \overline{\alpha})
  \]
  for every sequence $((\mbfs{\delta}_t, \varepsilon_t))_{t=1}^\infty$ in ${\mbb{R}}^d \times \mbb{R}$ such that $\|\mbfs{\delta}_t\| = \|\mbfs{\alpha}\|$, $\mbfs{\delta}_t{}^\top \mbfs{x} \le \varepsilon_t$ is a valid inequality for $C$, and $\mbfs{\delta}_t{}^\top \mbfs{x}_t \ge \varepsilon_t$ for each $t$.
\end{definition}

The following is our maximality criterion.

\begin{theorem}\label{thm:maximalitycriterion}
Let $\Gamma:D^m \to D^n$ be such that $C_\Gamma$ is a full-dimensional maximal $Q_h$-free set and $C_\Gamma \cap H$ is full-dimensional.
Assume there exists a set $I\subseteq D^m$ such that the following hold:
\begin{enumerate}[leftmargin = *]
\item The set $C^I_\Gamma \cap H $ is $Q_g$-free.
\item For each $\mbfs{\beta} \in I $, the inequality $-\Gamma(\mbfs{\beta}){}^\top\mbfs{x} + \mbfs{\beta}{}^\top \mbfs{y} \le 0 $ has an exposing sequence $((\mbfs{x}_t, \mbfs{y}_t))_{t=1}^\infty$ in $Q_g$ with respect to $C^I_\Gamma$.
\end{enumerate}
Then, $C^I_\Gamma \cap H$ is a maximal $Q_g$-free set.
\end{theorem}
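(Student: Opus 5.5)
Suppose, for contradiction, that $K' \subseteq H$ is a $Q_g$-free set with $C^I_\Gamma \cap H \subsetneq K'$. The set $C^I_\Gamma\cap H \supseteq C_\Gamma \cap H$ is full-dimensional, so $K'$ is full-dimensional as well. We pick $(\overline{\mbfs{x}},\overline{\mbfs{y}}) \in K' \setminus C^I_\Gamma$ (automatically in $H$). Our first goal is to produce an inequality that is valid for $C^I_\Gamma$, comes from a single $\mbfs{\beta}\in I$, and is violated by a point of $\relint(K')$.

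For this we separate inside $H$. The set $C^I_\Gamma \cap H$ is closed and convex, and $(\overline{\mbfs{x}},\overline{\mbfs{y}})$ lies outside it. Since $K'$ is full-dimensional in $H$, we may perturb and assume $(\overline{\mbfs{x}},\overline{\mbfs{y}})\in\relint(K')\setminus C^I_\Gamma$. There is then an inequality $\mbfs{e}^\top(\mbfs{x},\mbfs{y})\le\tau$, with $\mbfs{e}$ linearly independent of $(\mbfs{a},\mbfs{d})$, that is valid for $C^I_\Gamma\cap H$ and strictly violated by $(\overline{\mbfs{x}},\overline{\mbfs{y}})$. Applying Lemma~\ref{lem:Wavy2} with the cone $K=C^I_\Gamma$ (nonempty slice) and $\mbfs{f}=(\mbfs{a},\mbfs{d})$, we tilt it to a conic inequality $\mbfs{g}^\top(\mbfs{x},\mbfs{y})\le 0$ valid for $C^I_\Gamma$, with $\mbfs{e}=\mbfs{g}-\gamma\mbfs{f}$ and $\gamma\le\tau$. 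On $H$ the two inequalities agree up to the constant: $\mbfs{g}^\top\mbfs{z} = \mbfs{e}^\top \mbfs{z} - \gamma \le \mbfs{e}^\top\mbfs{z}-\tau$ fails at $\overline{\mbfs{z}}$, so $\mbfs{g}^\top\overline{\mbfs{z}}>0$.

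Next we decompose the tilted inequality. After scaling $\mbfs{g}$ to norm $\sqrt2$, we apply Rockafellar's Theorem~\ref{Rockafellarpatched} in the form of Lemma~\ref{lemma:lambdabounded}, with $J=\cl(I)$. We may assume $I$ is closed: passing to the closure does not change $C^I_\Gamma$, and exposing sequences pass to limits through the uniform bound $\tau$ together with a diagonal argument. This writes $\mbfs{g}=\sum_i\lambda_i(-\Gamma(\mbfs{\beta}_i),\mbfs{\beta}_i)$ with $\mbfs{\beta}_i\in I$. Since $\mbfs{g}^\top\overline{\mbfs{z}}>0$, some $i$ satisfies $-\Gamma(\mbfs{\beta}_i)^\top\overline{\mbfs{x}}+\mbfs{\beta}_i^\top\overline{\mbfs{y}}>0$. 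Fix this $\mbfs{\beta}:=\mbfs{\beta}_i\in I$.

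We now use the exposing sequence $(\mbfs{x}_t,\mbfs{y}_t)\subseteq Q_g$ for the inequality of $\mbfs{\beta}$ given by hypothesis 2. Because $K'$ is $Q_g$-free, each $(\mbfs{x}_t,\mbfs{y}_t)\notin\relint(K')$. Since $\mbfs{z}_t:=(\mbfs{x}_t,\mbfs{y}_t)\in H$, we can separate $\mbfs{z}_t$ from $\relint(K')$ within $H$ by an inequality valid for $K'$ and satisfied with $\ge$ at $\mbfs{z}_t$. Tilting again via Lemma~\ref{lem:Wavy2}, now with the cone $\cl\cone(K')$, gives a conic inequality $\mbfs{\delta}_t^\top\mbfs{z}\le 0$. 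It is valid for $\cl\cone(K') \supseteq C^I_\Gamma\cap H$, and hence, after a further check, valid for $C^I_\Gamma$ itself; it satisfies $\mbfs{\delta}_t^\top\mbfs{z}_t\ge0$, and we normalize $\|\mbfs{\delta}_t\|=\sqrt{1+1}$. The exposing property then forces $\mbfs{\delta}_t\to(-\Gamma(\mbfs{\beta}),\mbfs{\beta})$. Each $\mbfs{\delta}_t^\top\le0$ is valid at the relative interior point $\overline{\mbfs{z}}$ of $K'$, so in the limit $-\Gamma(\mbfs{\beta})^\top\overline{\mbfs{x}}+\mbfs{\beta}^\top\overline{\mbfs{y}}\le0$, which is a contradiction.

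The main obstacle is this last step: guaranteeing that the separating inequality for $\mbfs{z}_t$ versus $K'$ can be put into the homogeneous form required by Definition~\ref{def:exposingseq} and is valid for all of $C^I_\Gamma$, not just for its slice. Membership in $\cone(K')$ only controls points with $\mbfs{a}^\top\mbfs{x}+\mbfs{d}^\top\mbfs{y}<0$, so the recession directions and the region $\mbfs{f}^\top\mbfs{z}\ge0$ of $C^I_\Gamma$ need care. My first attempt is to mimic cases 2 and 3 of Lemma~\ref{lem:Wavy2}. If that fails, I would separate $\mbfs{z}_t$ from $\cl\cone(K')\supseteq C^I_\Gamma$ directly, using that $\mbfs{z}_t\notin\inte\cone(K')$ because a point of $H$ in $\inte \cone(K')$ would lie in $\relint(K')$.
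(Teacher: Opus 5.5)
There is a genuine gap, and it is exactly the step you flag as the main obstacle. Write $\mbfs{f}=(\mbfs{a},\mbfs{d})$. Tilting the separator of $\mbfs{z}_t$ against $\cl\cone(K')$ gives a homogeneous inequality $\mbfs{\delta}_t^\top\mbfs{z}\le 0$ that is controlled only on $\cl\cone(K')\subseteq\{\mbfs{z}:\mbfs{f}^\top\mbfs{z}\le 0\}$. Definition~\ref{def:exposingseq}, however, demands validity on all of $C^I_\Gamma$, and $C^I_\Gamma$ typically has points with $\mbfs{f}^\top\mbfs{z}>0$. For instance, in the example of Figure~\ref{example:hardcase}, $C_\Gamma=\{(\mbfs{x},\mbfs{y}):x_2\ge\|\mbfs{y}\|\}$ contains $(\mbfs{x},\mbfs{y})=((1,0),\mbfs{0})$, where $\mbfs{a}^\top\mbfs{x}+\mbfs{d}^\top\mbfs{y}=1$. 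So your fallback containment $\cl\cone(K')\supseteq C^I_\Gamma$ is false.

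Mimicking case 3 of Lemma~\ref{lem:Wavy2} does not rescue it either. That case needs the tilt constant to be the supremum of $\mbfs{e}_t$ over the slice of the very cone whose validity you want. Your constant is $\sup_{K'}\mbfs{e}_t^\top\mbfs{z}$, which can exceed $\sup_{C^I_\Gamma\cap H}\mbfs{e}_t^\top\mbfs{z}$, since $K'\supsetneq C^I_\Gamma\cap H$. For $\mbfs{w}\in C^I_\Gamma$ with $\mbfs{f}^\top\mbfs{w}=1$ you then only get $\mbfs{\delta}_t^\top\mbfs{w}\le\sup_{K'}\mbfs{e}_t^\top\mbfs{z}-\sup_{C^I_\Gamma\cap H}\mbfs{e}_t^\top\mbfs{z}$, which may be positive.

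The missing idea is that you never need a single homogeneous inequality valid for both $K'$ and $C^I_\Gamma$. Instead, tilt against $C^I_\Gamma$ and use the free right-hand side in Definition~\ref{def:exposingseq}. The separator $\mbfs{e}_t^\top\mbfs{z}\le\tau_t$ of $\mbfs{z}_t$ from $K'$ is valid for $C^I_\Gamma\cap H\subseteq K'$. So Lemma~\ref{lem:Wavy2} with $K=C^I_\Gamma$ gives $\mbfs{g}_t=\mbfs{e}_t+\gamma_t\mbfs{f}\neq\mbfs{0}$ with $\mbfs{g}_t^\top\mbfs{z}\le 0$ valid for $C^I_\Gamma$ and $\gamma_t\le\tau_t$. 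Since $\mbfs{z}_t\in H$, we have $\mbfs{g}_t^\top\mbfs{z}_t=\mbfs{e}_t^\top\mbfs{z}_t-\gamma_t\ge\tau_t-\gamma_t\ge 0$. Hence $\mbfs{g}_t^\top\mbfs{z}\le\mbfs{g}_t^\top\mbfs{z}_t$ is valid for $C^I_\Gamma$ and satisfied with equality at $\mbfs{z}_t$. After rescaling everything by $\sqrt{2}/\|\mbfs{g}_t\|$, the exposing property yields $\mbfs{g}_t\to(-\Gamma(\mbfs{\beta}),\mbfs{\beta})$ and $\mbfs{g}_t^\top\mbfs{z}_t\to 0$, and therefore $\tau_t-\gamma_t\to 0$. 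On $H$ one has $\mbfs{g}_t^\top\mbfs{z}=\mbfs{e}_t^\top\mbfs{z}-\gamma_t\le\tau_t-\gamma_t$ for every $\mbfs{z}\in K'$. Letting $t\to\infty$ at your point $\overline{\mbfs{z}}$ gives $-\Gamma(\mbfs{\beta})^\top\overline{\mbfs{x}}+\mbfs{\beta}^\top\overline{\mbfs{y}}\le 0$, the desired contradiction; this is the paper's argument.

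Separately, your Rockafellar decomposition, and the unproved claim that exposing sequences survive passing to $\cl(I)$ by a diagonal argument, are unnecessary. Since $C^I_\Gamma$ is by definition cut out by the inequalities indexed by $I$, any $\overline{\mbfs{z}}\notin C^I_\Gamma$ directly violates the inequality of some $\mbfs{\beta}\in I$.
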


\begin{proof}

Assume to the contrary that $C^I_\Gamma \cap H$ is not maximal. 
Thus, there exists a set $K'\subseteq H$ that is $Q_g$-free and $C^I_\Gamma \cap H \subsetneq K'$. 
Given that the inclusion is strict, there must exist some $\overline{\mbfs{\beta}}\in I$ such that the inequality
\(
-\Gamma(\overline{\mbfs{\beta}})^\top \mbfs{x} + \overline{\mbfs{\beta}}{}^\top \mbfs{y} \leq 0
\)
is not valid for $K'$.

By assumption, the inequality $-\Gamma(\overline{\mbfs{\beta}})^\top \mbfs{x} + \overline{\mbfs{\beta}}{}^\top \mbfs{y} \leq 0$ has an exposing sequence $((\mbfs{x}_t, \mbfs{y}_t))_{t=1}^\infty$ in $Q_g$ with respect to $C^I_\Gamma$.
The set $K'$ is $Q_g$-free.
Thus, for each $t \in \mbb{N}$, there exists a vector $(\mbfs{e}_t, \mbfs{f}_t) \in \mbb{R}^n \times \mbb{R}^m$ such that $(\mbfs{e}_t, \mbfs{f}_t)$ and $(\mbfs{a}, \mbfs{d})$ are linearly independent (in fact, orthogonal, because $(\mbfs{x}_t, \mbfs{y}_t)$ and $K'$ are in $H$), the inequality $\mbfs{e}_t{}^\top\mbfs{x}+\mbfs{f}_t{}^\top \mbfs{y} \le \tau_t$ is valid for $K'$, and $\mbfs{e}_t{}^\top\mbfs{x}_t+ \mbfs{f}_t{}^\top \mbfs{y}_t \ge \tau_t$, i.e., the inequality separates $(\mbfs{x}_t, \mbfs{y}_t)$ from $K'$.

The inequality $\mbfs{e}_t{}^\top\mbfs{x}+ \mbfs{f}_t{}^\top \mbfs{y} \le \tau_t$ is also valid for $C^I_\Gamma \cap H \subsetneq K'$.
We can use Lemma \ref{lem:Wavy2} with $(F, K, \mbfs{f}, \mbfs{e}, \tau) = (H, C^I_{\Gamma}, (\mbfs{a}, \mbfs{d}), (\mbfs{e}_t, \mbfs{f}_t), \tau_t)$ to write 
\[
(\mbfs{e}_t, \mbfs{f}_t) =  (\mbfs{g}_t, \mbfs{b}_t)- \gamma_t \cdot(\mbfs{a}, \mbfs{d}),
\]
where $\gamma_t \leq \tau_t $, $(\mbfs{g}_t, \mbfs{b}_t)$ is nonzero, and $\mbfs{g}_t{}^\top\mbfs{x}+\mbfs{b}_t{}^\top\mbfs{y} \leq 0$ is valid for $C^I_\Gamma$.
Given that $\mbfs{e}_t{}^\top\mbfs{x}_t+ \mbfs{f}_t{}^\top \mbfs{y}_t \ge \tau_t$ and $(\mbfs{x}_t, \mbfs{y}_t)\in Q_g \subseteq H$, we have
\begin{equation}\label{eqCr1}
\mbfs{g}_t{}^\top\mbfs{x}_t+\mbfs{b}_t{}^\top\mbfs{y}_t  =  (\mbfs{e}_t{}^\top\mbfs{x}_t+\mbfs{f}_t{}^\top\mbfs{y}_t)  + \gamma_t \cdot (\mbfs{a}{}^\top\mbfs{x}_t+\mbfs{d}{}^\top\mbfs{y}_t ) \ge \tau_t - \gamma_t \geq 0.
\end{equation}
As $\mbfs{g}_t{}^\top\mbfs{x}+\mbfs{b}_t{}^\top\mbfs{y} \leq 0$ is valid for $C^I_\Gamma$, it follows from~\eqref{eqCr1} that
\begin{equation}\label{eqCr2}
\text{$\mbfs{g}_t{}^\top\mbfs{x}+\mbfs{b}_t{}^\top\mbfs{y} \leq \mbfs{g}_t{}^\top\mbfs{x}_t+\mbfs{b}_t{}^\top\mbfs{y}_t$ is valid for $C^I_\Gamma$.}
\end{equation}
Note that~\eqref{eqCr1} and~\eqref{eqCr2} hold if we multiply $(\mbfs{g}_t, \mbfs{b}_t)$ by $\sqrt{2} / \|(\mbfs{g}_t, \mbfs{b}_t)\| > 0$; this scales $(\mbfs{g}_t, \mbfs{b}_t)$ to have the same norm as $(-\Gamma(\overline{\mbfs{\beta}}),\overline{\mbfs{\beta}})$, that is,
\begin{equation}\label{eqCr3}
\left\|\frac{\sqrt{2}}{\|(\mbfs{g}_t, \mbfs{b}_t)\|} \cdot (\mbfs{g}_t, \mbfs{b}_t) \right\| = \left\|(-\Gamma(\overline{\mbfs{\beta}}),\overline{\mbfs{\beta}}) \right\|.
\end{equation}

Using~\eqref{eqCr1}, \eqref{eqCr2}, and~\eqref{eqCr3}, we can apply the fact that $(\mbfs{x}_t,\mbfs{y}_t)$ is an exposing sequence for $-\Gamma(\overline{\mbfs{\beta}})^\top \mbfs{x} + \overline{\mbfs{\beta}}{}^\top \mbfs{y} \leq 0$ with respect to $C^I_\Gamma$; this implies that
\begin{align}\label{explim1}
(-\Gamma(\overline{\mbfs{\beta}}),\overline{\mbfs{\beta}}) = &  \lim_{t\to\infty} \frac{\sqrt{2}}{\|(\mbfs{g}_t, \mbfs{b}_t)\|} \cdot (\mbfs{g}_t, \mbfs{b}_t)\\
= & \lim_{t \to\infty} \frac{\sqrt{2}}{ \|(\mbfs{g}_t, \mbfs{b}_t)\| } \cdot \left( (\mbfs{e}_t, \mbfs{f}_t) + \gamma_t \cdot(\mbfs{a}, \mbfs{d}) \right),\nonumber
\end{align}
and
\begin{equation}\label{explim2}
\lim_{t\to\infty} \frac{\sqrt{2}}{ \|(\mbfs{g}_t, \mbfs{b}_t)\| }\cdot\left(\mbfs{g}_t{}^\top\mbfs{x}_t+\mbfs{b}_t{}^\top  \mbfs{y}_t\right)  = 0.
\end{equation}

We can use $\mbfs{e}_t{}^\top\mbfs{x}_t+ \mbfs{f}_t{}^\top \mbfs{y}_t \ge \tau_t$ and $(\mbfs{x}_t, \mbfs{y}_t)\in H$ to conclude that
\begin{align}\label{exp:lim1}
0\leq &\, \frac{\sqrt{2}}{ \|(\mbfs{g}_t, \mbfs{b}_t)\| } \cdot(\tau_t - \gamma_t) \nonumber\\
\leq &\, \frac{\sqrt{2}}{ \|(\mbfs{g}_t, \mbfs{b}_t)\| }\cdot( \mbfs{e}_t{}^\top\mbfs{x}_t+ \mbfs{f}_t{}^\top \mbfs{y}_t  - \gamma_t  ) \nonumber\\
= &\, \frac{\sqrt{2}}{ \|(\mbfs{g}_t, \mbfs{b}_t)\| }\cdot\left(( \mbfs{e}_t+\gamma_t \cdot \mbfs{a}){}^\top\mbfs{x}_t+ (\mbfs{f}_t+\gamma_t\cdot\mbfs{d}){}^\top \mbfs{y}_t   \right) \nonumber \\
= &\, \frac{\sqrt{2}}{ \|(\mbfs{g}_t, \mbfs{b}_t)\| }\cdot\left(\mbfs{g}_t{}^\top\mbfs{x}_t+\mbfs{b}_t{}^\top  \mbfs{y}_t\right) .
\end{align}

Thus, by~\eqref{explim2}, we have
\begin{equation}\label{lemma:rhslimit}
\lim_{t\to\infty}
\frac{\sqrt{2}}{ \|(\mbfs{g}_t, \mbfs{b}_t)\| } \cdot (\tau_t - \gamma_t) = 0.
\end{equation}

Finally, for each $t \in \mbb{N}$, the inequality
\begin{equation}\label{eqFinalLimit}
\frac{\sqrt{2}}{ \|(\mbfs{g}_t, \mbfs{b}_t)\| } \cdot\left( (\mbfs{e}_t+\gamma_t \cdot \mbfs{a}){}^\top\mbfs{x}_t+ (\mbfs{f}_t+\gamma_t\cdot\mbfs{d}){}^\top \mbfs{y}_t   \right)\leq \frac{\sqrt{2}}{ \|(\mbfs{g}_t, \mbfs{b}_t)\| } \cdot (\tau_t - \gamma_t)
\end{equation}
is valid for $K'$ because it is an aggregation of a nonnegative multiple of $\mbfs{e}_t{}^\top\mbfs{x}+\mbfs{f}_t{}^\top \mbfs{y} \le \tau_t$ and a multiple of $\mbfs{a}{}^\top\mbfs{x} + \mbfs{d}{}^\top\mbfs{y} = -1$. 
By \eqref{explim1} and \eqref{lemma:rhslimit}, the limit of \eqref{eqFinalLimit} as $t\to\infty$ is $-\Gamma(\overline{\mbfs{\beta}})^\top \mbfs{x} + \overline{\mbfs{\beta}}{}^\top \mbfs{y} \leq 0$; this inequality is valid for $K'$.
However, this contradicts that $-\Gamma(\overline{\mbfs{\beta}})^\top \mbfs{x} + \overline{\mbfs{\beta}}{}^\top \mbfs{y} \leq 0$ is not valid for $K'$.
\end{proof}

Theorem \ref{thm:maximalitycriterion} uses exposing sequences in $Q_g\subseteq H$ to expose inequalities of a relaxation of $C_\Gamma$.
This is a subtle analysis of the inhomogeneous $(n+m-1)$-dimensional slice within the homogeneous $(n+m)$-dimensional space. 

\subsection{Common exposing sequences}

Theorems \ref{thm:easycase} and \ref{thm:hardcase} consider two different cases that result in two different
characterizations involving $C^G_{\Gamma}$.
Nevertheless we can unify part of their proofs. In both cases we work with an alternative description of $C^G_{\Gamma}\cap H$ of the form $C_\Gamma^J \cap H$, for a compact set $J\subseteq D^m$.
The choices of $J$ differ in both cases, but they share a common subset for which we design an exposing sequence.

The next lemma reduces $J$ in the description of $C_\Gamma^J\cap H$ to the subset generating the exposed rays of $\cone(\{(-\Gamma(\mbfs{\beta}), \mbfs{\beta}) \,: \, \mbfs{\beta} \in J\})$.

\begin{lemma}\label{lemma:exposedrays}
Let $\Gamma:D^m \to D^n$ be such that $C_\Gamma$ is a maximal $Q_h$-free set.
Let $J\subseteq D^m$ a compact set, and let $I \subseteq J$ be the set of $\overline{\mbfs{\beta}}\in J$ such that $(-\Gamma(\overline{\mbfs{\beta}}), \overline{\mbfs{\beta}})$ generates an exposed ray of $\cone(\{(-\Gamma(\mbfs{\beta}), \mbfs{\beta}) \,: \, \mbfs{\beta} \in J\})$. 
Then, $C^J_\Gamma =  C^I_\Gamma$.
\end{lemma}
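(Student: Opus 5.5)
We must show $C^J_\Gamma = C^I_\Gamma$. Since $I \subseteq J$, the inclusion $C^J_\Gamma \subseteq C^I_\Gamma$ is immediate, so the work is the reverse inclusion. Set $V_J := \{(-\Gamma(\mbfs{\beta}), \mbfs{\beta}) : \mbfs{\beta}\in J\}$ and $K_J := \cone(V_J)$. Then $C^J_\Gamma$ is the polar cone of $V_J$, hence of $K_J$ and of its closure. The plan is therefore to show $\cl K_J = \cl \cone(V_I)$. Taking polars then gives $C^J_\Gamma = (\cl K_J)^\circ = (\cl\cone(V_I))^\circ = C^I_\Gamma$.

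\textbf{Step 1: $K_J$ is closed and pointed.} By Theorem~\ref{thmHQuad}, $\mbfs{0}\notin\conv(\{\Gamma(\mbfs{\beta})\})$. As in the proof of Lemma~\ref{lemma:lambdabounded}, we get a separating vector $(\mbfs{f},\mbfs{h})$ and $w>0$ with $-\mbfs{f}{}^\top\mbfs{u}+\mbfs{h}{}^\top\mbfs{\beta}\ge w$ for every $(\mbfs{u},\mbfs{\beta})\in V_J$. Here $\Gamma$ is continuous since it is non-expansive, so $V_J$ is compact because $J$ is compact. Thus $V_J$ is a compact set lying strictly on one side of a hyperplane through the origin. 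Consequently $K_J$ equals the cone over the compact convex base $B := K_J\cap\{(\mbfs{u},\mbfs{\beta}) : -\mbfs{f}{}^\top\mbfs{u}+\mbfs{h}{}^\top\mbfs{\beta}=1\}$, which is the convex hull of the rescaled compact set $V_J$. Hence $K_J$ is closed and pointed.

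\textbf{Step 2: Straszewicz plus Krein--Milman.} By Krein--Milman (or Minkowski, since $B$ is compact and finite-dimensional), $B=\conv(\mathrm{ext}\,B)$. By Straszewicz's theorem \cite[Theorem 18.6]{R1970}, $\mathrm{ext}\,B \subseteq \cl(\mathrm{exp}\,B)$, so $B=\cl\conv(\mathrm{exp}\,B)$.

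Exposed points of $B$ correspond exactly to exposed rays of $K_J$. One direction: if a linear functional $\ell$ exposes a point $p$ of $B$, then $\ell - c\,(-\mbfs{f},\mbfs{h})$ with $c=\ell(p)$ exposes the ray through $p$ in $K_J$, because this functional is $\le 0$ on $K_J$ and vanishes exactly on that ray. The converse direction is the same argument reversed.

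Every extreme point of $B$ is a rescaled element of $V_J$, since $B$ is the convex hull of a compact set. Every exposed ray of $K_J$ is therefore generated by some $(-\Gamma(\overline{\mbfs{\beta}}),\overline{\mbfs{\beta}})$ with $\overline{\mbfs{\beta}}\in J$, and that $\overline{\mbfs{\beta}}$ lies in $I$ by definition. So $\mathrm{exp}\,B\subseteq$ the rescaling of $V_I$.

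Combining these, $K_J = \cone(B) = \cl\cone(\text{rescaled }V_I)$. The closure commutes with taking the cone here because the base is bounded away from the origin. This yields $\cl\cone(V_I)\supseteq K_J$. The reverse containment $\cl\cone(V_I)\subseteq K_J$ is trivial because $K_J$ is closed.

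\textbf{Step 3: conclude.} Each inequality indexed by $\mbfs{\beta}\in J$ has normal vector in $\cl\cone(V_I)$. It is therefore a limit of nonnegative combinations of inequalities indexed by $I$, and so it is valid for $C^I_\Gamma$. This gives $C^I_\Gamma\subseteq C^J_\Gamma$.

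The main obstacle is Step 2. Straszewicz's theorem only yields extreme points as limits of exposed points, so the closure must be carried through carefully. One also has to verify that several $\mbfs{\beta}$ generating the same ray cause no trouble; they do not, because any generator of an exposed ray qualifies for membership in $I$.
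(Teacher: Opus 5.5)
Your proof is correct and follows the paper's route. You show that $\cone(V_J)$ is closed and pointed via the strict separation coming from Theorem~\ref{thmHQuad}, write it as the closed conic hull of its exposed rays (each generated by an element indexed by $I$), and conclude by polarity. The only difference is that where the paper cites Theorem~18.7 of Rockafellar, you derive the needed instance yourself through a compact base, Straszewicz's theorem and Minkowski/Krein--Milman; this also makes explicit the paper's unproved remark that every exposed ray has a generator in $V_J$.
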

\begin{proof}
The inclusion $C^J_\Gamma \subseteq  C^I_\Gamma$ holds because $I \subseteq J$.
Set $B :=\{(-\Gamma(\mbfs{\beta}),\mbfs{\beta}) \,:\ \mbfs{\beta}\in J\}$ and $P:=\cone(B)$. 
The maximality of $C_\Gamma$ ensures $\mbfs{0}\not\in \conv(\{\Gamma(\mbfs{\beta}) \,:\ \mbfs{\beta}\in D^m\})$ by Theorem~\ref{thmHQuad}, which implies $(\mbfs{0},\mbfs{0})\not\in \conv(B)$.
Additionally, $B$ is compact. Hence, $P$ is a closed pointed cone. 
The set $I$ contains one representative of each exposed ray of $P$, so $P = \cl(\cone(\{(-\Gamma(\mbfs{\beta}), \mbfs{\beta}) \,: \, \mbfs{\beta} \in I\} ))$; see, e.g., Theorem 18.7 in \cite{R1970}.
Thus, each valid inequality for $C^J_{\Gamma}$ is implied by inequalities defining $C^I_{\Gamma}$.
Hence, $C^J_\Gamma \supseteq  C^I_\Gamma$.
\end{proof}

Recall $G^{\circ}$ defined in \eqref{defnGcirc}.
The next result contains the aforementioned family of inequalities appearing in both characterizations: those indexed by $\cl(G^\circ) \cap I $.
Importantly, the result does not assume an ordering of $\|\mbfs{a}\|$ and $\|\mbfs{d}\|$.

\begin{lemma}\label{lemma:exposing_common}
Let $\Gamma : D^m \to D^n$ be such that $C_{\Gamma}$ is a full-dimensional maximal $Q_h$-free set.
Let $J\subseteq G$ be compact and $I$ be the set of vectors $\overline{\mbfs{\beta}}\in J$ such that $(-\Gamma(\overline{\mbfs{\beta}}), \overline{\mbfs{\beta}})$ generates an exposed ray of $\cone(\{(-\Gamma(\mbfs{\beta}), \mbfs{\beta}) \,: \, \mbfs{\beta} \in J\})$.
The following holds: For each $\overline{\mbfs{\beta}} \in \cl(G^\circ) \cap I $, the inequality $-\Gamma(\overline{\mbfs{\beta}}){}^\top\mbfs{x} + \overline{\mbfs{\beta}}{}^\top \mbfs{y} \le 0$ has an exposing sequence in $Q_g$ with respect to $C_\Gamma^I$.
\end{lemma}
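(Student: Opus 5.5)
The plan is to build the exposing sequence from points of $Q_g$ lying on the boundary ray of $Q_h$ "in direction $\overline{\mbfs{\beta}}$". First take $\overline{\mbfs{\beta}} \in G^\circ \cap I$ and set $\mbfs{r} := (\Gamma(\overline{\mbfs{\beta}}), \overline{\mbfs{\beta}})$. Then $\mbfs{r}$ lies on the boundary of $Q_h$, it satisfies the inequality indexed by $\overline{\mbfs{\beta}}$ with equality, and $\mbfs{a}{}^\top\Gamma(\overline{\mbfs{\beta}}) + \mbfs{d}{}^\top\overline{\mbfs{\beta}} < 0$. Hence a positive rescaling $s\,\mbfs{r}$, with $s := -1/(\mbfs{a}{}^\top\Gamma(\overline{\mbfs{\beta}}) + \mbfs{d}{}^\top\overline{\mbfs{\beta}})$, lies in $Q_g$. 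A single point will not expose the inequality, since many valid inequalities of $C^I_\Gamma$ may touch $s\mbfs{r}$. So I would perturb it and use the exposed-ray assumption on $I$.

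Because $\overline{\mbfs{\beta}}\in I$, there is a vector $(\mbfs{u},\mbfs{v})$ exposing the ray generated by $(-\Gamma(\overline{\mbfs{\beta}}),\overline{\mbfs{\beta}})$ in $P:=\cone(\{(-\Gamma(\mbfs{\beta}),\mbfs{\beta}):\mbfs{\beta}\in J\})$. Concretely, $-\Gamma(\mbfs{\beta}){}^\top\mbfs{u}+\mbfs{\beta}{}^\top\mbfs{v}\le 0$ for all $\mbfs{\beta}\in J$, with equality exactly on that ray. Since $P$ is the polar of $C^J_\Gamma = C^I_\Gamma$ (Lemma~\ref{lemma:exposedrays}), the point $(\mbfs{u},\mbfs{v})$ lies in $C^I_\Gamma$ and meets only the face of inequalities parallel to $\overline{\mbfs{\beta}}$.

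I would then set $\mbfs{p}_t := \mbfs{r} - \tfrac1t(\mbfs{u},\mbfs{v})$, or the analogous point pushed slightly into $Q_h$, rescaled onto $H$. For large $t$ the normalization is still negative by continuity, so $\mbfs{p}_t$ lies in $Q_g$ provided it lies in $Q_h$. If the naive perturbation leaves $Q_h$, I would instead take the point of $Q_h\cap\{\text{ray through }\mbfs{r}-\tfrac1t(\mbfs{u},\mbfs{v})\}$ nearest to it, using that $\mbfs{r}$ is in the boundary of $Q_h$ with outward normal $(\Gamma(\overline{\mbfs{\beta}}),-\overline{\mbfs{\beta}})$. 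The perturbation direction $-(\mbfs{u},\mbfs{v})$ has a nonnegative component along $(-\Gamma(\overline{\mbfs{\beta}}),\overline{\mbfs{\beta}})$, which is exactly what is needed.

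To verify the exposing property, take valid inequalities $\mbfs{\delta}_t{}^\top\mbfs{z}\le\varepsilon_t$ for $C^I_\Gamma$ with $\|\mbfs{\delta}_t\|=\sqrt2$ that are violated or tight at $\mbfs{p}_t$. Since $C^I_\Gamma$ is a cone, $\varepsilon_t\ge 0$ and $\mbfs{\delta}_t{}^\top\mbfs{z}\le 0$ is valid as well. By Lemma~\ref{lemma:lambdabounded} we can write $\mbfs{\delta}_t=\sum_i\lambda_{i,t}(-\Gamma(\mbfs{\beta}_{i,t}),\mbfs{\beta}_{i,t})$ with bounded $\lambda_{i,t}$. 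After passing to convergent subsequences, the limit $\mbfs{\delta}$ lies in $P$ and satisfies $\mbfs{\delta}{}^\top\mbfs{r}\ge 0$. Non-expansiveness gives $-\Gamma(\mbfs{\beta}){}^\top\Gamma(\overline{\mbfs{\beta}})+\mbfs{\beta}{}^\top\overline{\mbfs{\beta}}\le 0$, so every generator is nonpositive at $\mbfs{r}$, and hence $\mbfs{\delta}{}^\top\mbfs{r}=0$. Using the $\tfrac1t(\mbfs{u},\mbfs{v})$ perturbation and dividing by $1/t$, we also obtain $\mbfs{\delta}{}^\top(\mbfs{u},\mbfs{v})\ge 0$. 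Exposedness then forces $\mbfs{\delta}$ to be a positive multiple of $(-\Gamma(\overline{\mbfs{\beta}}),\overline{\mbfs{\beta}})$, and the norm fixes it. Moreover $\varepsilon_t\le\mbfs{\delta}_t{}^\top\mbfs{p}_t\to 0$. Since every subsequence has a further subsequence with this limit, the whole sequence converges.

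The main obstacle is the interplay of the two orders of smallness. The $\tfrac1t$-term must dominate the error terms coming from $\mbfs{\delta}_t{}^\top\mbfs{r}$, which are only known to be $\le 0$ and not $O(1/t^2)$. I would handle this by dividing the violation inequality by $1/t$ and using that $\mbfs{\delta}_t{}^\top\mbfs{r}\le0$ only helps. Finally, for $\overline{\mbfs{\beta}}\in\cl(G^\circ)\cap I$ with $\overline{\mbfs{\beta}}\notin G^\circ$, the rescaling factor blows up. I would approximate by $\mbfs{\beta}_k\in G^\circ$ with $\mbfs{\beta}_k\to\overline{\mbfs{\beta}}$, use the points $\mbfs{r}_k=(\Gamma(\mbfs{\beta}_k),\mbfs{\beta}_k)$ (still in $Q_h$ and on $H$ after scaling) in place of $\mbfs{r}$, and run a diagonal argument with $k=k(t)$ chosen so that $\|\mbfs{r}_k-\mbfs{r}\|=o(1/t)$. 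This approximation step is where I expect the most delicate bookkeeping.
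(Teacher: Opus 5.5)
Your skeleton matches the paper's: perturb a point $(\Gamma(\cdot),\cdot)$ where the $\overline{\mbfs{\beta}}$-inequality is tight along the exposing vector, rescale onto $H$, then combine Lemma~\ref{lemma:lambdabounded}, non-expansiveness, and exposedness. The construction itself, however, has genuine gaps.

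\textbf{The sign is wrong.} By exposedness, $(\mbfs{u},\mbfs{v})$ strictly satisfies every inequality indexed by $\mbfs{\beta}\in J\setminus\{\overline{\mbfs{\beta}}\}$. So $\mbfs{r}-s(\mbfs{u},\mbfs{v})$ strictly violates each such inequality that is tight at $\mbfs{r}$. Such tight inequalities do occur, e.g.\ $\Gamma(\mbfs{\beta})=|\mbfs{\beta}|$, $\overline{\mbfs{\beta}}=(1,0)$, $\mbfs{\beta}=(0,1)$. The constant sequence of that inequality then defeats the exposing property. Your own deduction of $\mbfs{\delta}{}^\top(\mbfs{u},\mbfs{v})\ge 0$ needs $\mbfs{r}+s(\mbfs{u},\mbfs{v})$ instead.

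\textbf{With the correct sign, the point is still in general not in $Q_h$.} The inward component you invoke is exactly zero, since it is the equality $\Gamma(\overline{\mbfs{\beta}}){}^\top\mbfs{u}=\overline{\mbfs{\beta}}{}^\top\mbfs{v}$. Hence $\|\mbfs{x}\|^2-\|\mbfs{y}\|^2=s^2(\|\mbfs{u}\|^2-\|\mbfs{v}\|^2)$. This is $\ge 0$ whenever $(\mbfs{u},\mbfs{v})\in C_\Gamma$ (e.g.\ $J=D^m$), because every point of $C_\Gamma$ satisfies $\|\mbfs{x}\|\ge\|\mbfs{y}\|$. Your repair does nothing: $Q_h$ is a cone, so the ray through $\mbfs{p}_t$ either lies in $Q_h$ or meets it only at $\mbfs{0}$.

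The missing idea is an explicit push into $Q_h$ that slightly violates the $\overline{\mbfs{\beta}}$-inequality, at an order strictly between the two scales. For example, take the $\mbfs{y}$-part to be $(1+\eta)\overline{\mbfs{\beta}}+s\mbfs{v}$ with $s^2\ll\eta\ll s$. Then $\|\mbfs{x}\|^2-\|\mbfs{y}\|^2=-2\eta+o(\eta)<0$. Any separating $\mbfs{\delta}_t$ satisfies $0\le\mbfs{\delta}_t{}^\top\mbfs{r}+s\,\mbfs{\delta}_t{}^\top(\mbfs{u},\mbfs{v})+\eta\,\mbfs{\delta}_t{}^\top(\mbfs{0},\overline{\mbfs{\beta}})$, whence $|\mbfs{\delta}_t{}^\top(\mbfs{u},\mbfs{v})|\le\sqrt{2}\,\eta/s\to 0$. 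This competition (the paper's $\varepsilon_t^2$ versus $\varepsilon_t^3$) is the real two-scale issue. The terms $\mbfs{\delta}_t{}^\top\mbfs{r}\le 0$ you worry about have the favorable sign.

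\textbf{For $\overline{\mbfs{\beta}}\in\cl(G^\circ)\setminus G^\circ$, your coupling goes the wrong way.} Since $\overline{\mbfs{\beta}}\in J\subseteq G$, we have $\mbfs{a}{}^\top\Gamma(\overline{\mbfs{\beta}})+\mbfs{d}{}^\top\overline{\mbfs{\beta}}=0$. So the slack $\eta_k:=-(\mbfs{a}{}^\top\Gamma(\mbfs{\beta}_k)+\mbfs{d}{}^\top\mbfs{\beta}_k)$ satisfies $\eta_k\le\sqrt{2}\,\|\mbfs{r}_k-\mbfs{r}\|$.
\begin{itemize}
\item Requiring $\|\mbfs{r}_k-\mbfs{r}\|=o(1/t)$ therefore forces $\eta_k=o(1/t)$. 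The term $\tfrac1t(\mbfs{a}{}^\top\mbfs{u}+\mbfs{d}{}^\top\mbfs{v})$, whose sign you do not control, can then make the normalizing quantity nonnegative, and the point cannot be scaled onto $H$.
\item The rescaling factor is of order $1/\eta_k$, so your bound $\varepsilon_t\le\mbfs{\delta}_t{}^\top\mbfs{p}_t\to 0$ needs all perturbations to be $o(\eta_k)$.
\end{itemize}

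The scales must instead be subordinated to the slack, as the paper does. It takes $\varepsilon_t\le\eta_t$ and the perturbation $\varepsilon_t^2(\overline{\mbfs{x}},\overline{\mbfs{y}})$. Replacing $\mbfs{r}$ by $(\Gamma(\mbfs{\beta}_t),\mbfs{\beta}_t)$ creates a first-order mismatch $2\varepsilon_t^2\alpha_t$, with $\alpha_t=\Gamma(\mbfs{\beta}_t){}^\top\overline{\mbfs{x}}-\mbfs{\beta}_t{}^\top\overline{\mbfs{y}}$. This is cancelled explicitly by the push $(\alpha_t\varepsilon_t^2+\varepsilon_t^3)\mbfs{\beta}_t$ in $\mbfs{y}$. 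Dividing by this push puts the coefficient $1/(\alpha_t+\varepsilon_t)\to\infty$ in front of the gap term, so only $\alpha_t\to 0$ is needed and no convergence rate. This single construction also covers $G^\circ\cap I$, so neither your case split nor the diagonal argument is necessary.
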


\begin{proof}
Let $\overline{\mbfs{\beta}} \in \cl(G^\circ) \cap I $.
The vector $(-\Gamma(\overline{\mbfs{\beta}}), \overline{\mbfs{\beta}})$ generates an exposed ray of of $\cone(\{(-\Gamma(\mbfs{\beta}), \mbfs{\beta}) \,: \, \mbfs{\beta} \in J\})$.
Thus, there exists a vector $(\overline{\mbfs{x}},\overline{\mbfs{y}}) \in \mbb{R}^n \times \mbb{R}^m$ such that
\[
-\Gamma(\overline{\mbfs{\beta}})^\top \overline{\mbfs{x}} + \overline{\mbfs{\beta}}{}^\top \overline{\mbfs{y}} = 0 > -\Gamma({\mbfs{\beta}})^\top \overline{\mbfs{x}} + {\mbfs{\beta}}^\top \overline{\mbfs{y}} \qquad \forall ~\mbfs{\beta} \in J \setminus \{\overline{\mbfs{\beta}}\}.
\]
As $\overline{\mbfs{\beta}}\in \cl(G^\circ)$, there exists a sequence $( {\mbfs{\beta}_t})_{t=1}^\infty$ in $G^{\circ}$ such that
\[
\lim_{t\to\infty} \mbfs{\beta}_t =  \overline{\mbfs{\beta}}.
\]
Set $\alpha_t :=  \Gamma(\mbfs{\beta}_t)^\top \overline{\mbfs{x}} - \mbfs{\beta}_t^\top \overline{\mbfs{y}}$ for each $t \in \mbb{N}$.
We have $\alpha_t \geq 0$ and $\lim_{t\to\infty} \alpha_t = 0$.

For each $t \in \mbb{N}$, define
\[
\varepsilon_t := \min\left\{- \left(\mbfs{a}{}^\top \Gamma({\mbfs{\beta}_t}) + \mbfs{d}{}^\top {\mbfs{\beta_t}}\right), \frac{1}{t}\right\}.
 \]
Using $\mbfs{\beta}_t \in G^{\circ}$, we see that $0 < \varepsilon_t$  and $\lim_{t\to\infty} \varepsilon_t = 0$.

For each $t \in \mbb{N}$, define the vectors
\[
\begin{array}{rcl}
\widehat{\mbfs{x}}_t & :=&\Gamma(\mbfs{\beta}_t)+\varepsilon_t^2\cdot\overline{\mbfs{x}} \\[.1cm]
\widehat{\mbfs{y}}_t &:=&\left(1+\alpha_t\varepsilon_t^2+\varepsilon_t^3\right)\cdot\mbfs{\beta}_t+\varepsilon_t^2\cdot \overline{\mbfs{y}}.
\end{array}
\]

Observe that
\begin{align}
\mbfs{a}{}^\top \widehat{\mbfs{x}}_t + \mbfs{d}{}^\top \widehat{\mbfs{y}}_t 
&= \mbfs{a}{}^\top \Gamma(\mbfs{\beta}_t) 
+ \varepsilon_t^2 \mbfs{a}^\top \overline{\mbfs{x}}  
+ \left(1+\alpha_t\varepsilon_t^2+\varepsilon_t^3\right) \mbfs{d}{}^\top \mbfs{\beta}_t  
+ \varepsilon_t^2 \mbfs{d}{}^\top\overline{\mbfs{y}} \nonumber\\
&\leq -\varepsilon_t 
+ \varepsilon_t^2 \mbfs{a}^\top \overline{\mbfs{x}}  
+ (\alpha_t\varepsilon_t^2+\varepsilon_t^3)\, \mbfs{d}{}^\top \mbfs{\beta}_t  
+ \varepsilon_t^2 \mbfs{d}{}^\top\overline{\mbfs{y}}. \label{ahatbound}
\end{align}
Hence, $(1/\varepsilon_t)(\mbfs{a}{}^\top \widehat{\mbfs{x}}_t + \mbfs{d}{}^\top \widehat{\mbfs{y}}_t )\leq -1 + o(1)$, where $o(1)$ is a term that goes to 0 when $t\to \infty$.
Thus, $\mbfs{a}{}^\top \widehat{\mbfs{x}}_t + \mbfs{d}{}^\top \widehat{\mbfs{y}}_t < 0$ for sufficiently large $t$.
After taking a subsequence, we assume $\mbfs{a}{}^\top \widehat{\mbfs{x}}_t + \mbfs{d}{}^\top \widehat{\mbfs{y}}_t < 0$ for all $t$.

We now verify that $(\widehat{\mbfs{x}}_t,\widehat{\mbfs{y}}_t)\in Q_h$. 
We have
\begin{align*}
\hspace*{2cm} \|\widehat{\mbfs{x}}_t\|^2
&=
1 + 2\varepsilon_t^2 \Gamma(\mbfs{\beta}_t)^\top \overline{\mbfs{x}}
+ \varepsilon_t^4 \|\overline{\mbfs{x}}\|^2,
\intertext{and}
\|\widehat{\mbfs{y}}_t\|^2
&=
\left(1+\alpha_t\varepsilon_t^2+\varepsilon_t^3\right)^2
+ 2\varepsilon_t^2 \left(1+\alpha_t\varepsilon_t^2+\varepsilon_t^3\right)\mbfs{\beta}_t^\top \overline{\mbfs{y}}
+ \varepsilon_t^4 \|\overline{\mbfs{y}}\|^2 \\
&=
1 
+ \alpha_t^2\varepsilon_t^4 + \varepsilon_t^6 + 2\alpha_t\varepsilon_t^2 + 2\varepsilon_t^3  + 2\alpha_t\varepsilon_t^5  \\
&\quad
+ 2\varepsilon_t^2 \mbfs{\beta}_t^\top \overline{\mbfs{y}}
+ 2\alpha_t\varepsilon_t^4 \mbfs{\beta}_t^\top \overline{\mbfs{y}}
+ 2\varepsilon_t^5 \mbfs{\beta}_t^\top \overline{\mbfs{y}}
+ \varepsilon_t^4 \|\overline{\mbfs{y}}\|^2.
\end{align*}
Using $\alpha_t =  \Gamma(\mbfs{\beta}_t)^\top \overline{\mbfs{x}} - \mbfs{\beta}_t^\top \overline{\mbfs{y}}$, we see that
\begin{align*}
\|\widehat{\mbfs{x}}_t\|^2 - \|\widehat{\mbfs{y}}_t\|^2
= &
2\varepsilon_t^2 \alpha_t
- \left(\alpha_t^2\varepsilon_t^4 + \varepsilon_t^6 + 2\alpha_t\varepsilon_t^2 + 2\varepsilon_t^3  + 2\alpha_t\varepsilon_t^5 \right) \\
&
- 2\alpha_t\varepsilon_t^4 \mbfs{\beta}_t^\top \overline{\mbfs{y}}
- 2\varepsilon_t^5 \mbfs{\beta}_t^\top \overline{\mbfs{y}}
+ \varepsilon_t^4 \left(\|\overline{\mbfs{x}}\|^2 - \|\overline{\mbfs{y}}\|^2\right)\\
= & -2\varepsilon_t^3 +O(\varepsilon_t^4).
\end{align*}

Hence, $(1/\varepsilon_t^3)(\|\widehat{\mbfs{x}}_t\|^2 - \|\widehat{\mbfs{y}}_t\|^2) = -2 + o(1)$.
Thus, $\|\widehat{\mbfs{x}}_t\|^2 - \|\widehat{\mbfs{y}}_t\|^2 < 0$ for all sufficiently large $t$.
After taking a subsequence, we assume $\|\widehat{\mbfs{x}}_t\|^2 - \|\widehat{\mbfs{y}}_t\|^2 < 0$ for all $t$.
Hence, $(\widehat{\mbfs{x}}_t,\widehat{\mbfs{y}}_t)\in Q_h$ for all $t$.

For each $t$, define the vector
\[
(\mbfs{x}_t,\mbfs{y}_t ) := \frac{-1}{\mbfs{a}{}^\top \widehat{\mbfs{x}}_t + \mbfs{d}{}^\top \widehat{\mbfs{y}}_t }\cdot ( \widehat{\mbfs{x}}_t,\widehat{\mbfs{y}}_t). 
\]
We have $(\mbfs{x}_t,\mbfs{y}_t )\in H \cap Q_h = Q_g$.
In what remains, we argue that $((\mbfs{x}_t, \mbfs{y}_t))_{t=1}^\infty $ is an exposing sequence for $-\Gamma(\overline{\mbfs{\beta}})^\top \mbfs{x} + \overline{\mbfs{\beta}}{}^\top \mbfs{y} \leq 0$ with respect to $C^I_\Gamma$.

Take any sequence $((-\mbfs{g}_t, \mbfs{b}_t, r_t))_{t=1}^{\infty}$ in $\mbb{R}^n\times \mbb{R}^m\times \mbb{R}$ such that, for each $t$, we have $\|(-\mbfs{g}_t, \mbfs{b}_t)\| = \|(-\Gamma(\overline{\mbfs{\beta}}), \overline{\mbfs{\beta}})\| = \sqrt{2}$, and $-\mbfs{g}_t^\top \mbfs{x} + \mbfs{b}_t^\top \mbfs{y} \leq r_t$ is valid for $C^I_\Gamma$ and separates $(\mbfs{x}_t, \mbfs{y}_t)$, i.e.,
\[
-\mbfs{g}_t^\top \mbfs{x}_t + \mbfs{b}_t^\top \mbfs{y}_t \geq r_t.
\]
We finish the proof of the lemma by showing $\lim_{t\to\infty}( \mbfs{g}_t, \mbfs{b}_t , r_t) =( \Gamma(\overline{\mbfs{ \beta}}), \overline{\mbfs{ \beta}},0)$.

For each $t$, we have $r_t \ge 0$ because $(\mbfs{0}, \mbfs{0}) \in C^I_{\Gamma}$.
The inequality $r_t \ge 0$ implies that each inequality $-\mbfs{g}_t^\top \mbfs{x} + \mbfs{b}_t^\top \mbfs{y} \leq 0$ is valid for  $C^I_\Gamma$.
The inequality $-\mbfs{g}_t^\top \mbfs{x} + \mbfs{b}_t^\top \mbfs{y} \leq 0$ is then also valid for  $C^J_\Gamma$ because $C^I_\Gamma = C^J_\Gamma$ by Lemma \ref{lemma:exposedrays}. 
As $J \subseteq D^m$ is closed, we can apply Lemma \ref{lemma:lambdabounded} and write 
\[
(\mbfs{g}_t, \mbfs{b}_t ) = \left(\sum_{i=1}^{n+m} \lambda_{[i,t]} \cdot \Gamma(\mbfs{\beta}_{[i,t]}),~~\sum_{i=1}^{n+m} \lambda_{[i,t]}\cdot \mbfs{\beta}_{[i,t]}\right),
\]
where each $\mbfs{\beta}_{[i,t]}\in  J$ and $\lambda_{[i,t]} \in [0, \tau]$ for some upper bound $\tau$ that only depends on $\Gamma$.

As $-\mbfs{g}_t^\top \mbfs{x}_t + \mbfs{b}_t^\top \mbfs{y}_t \geq r_t \ge 0$, we have
\begin{align}
0 \geq\,& \mbfs{g}_t^\top \widehat{\mbfs{x}}_t - \mbfs{b}_t^\top \widehat{\mbfs{y}}_t \nonumber \\
=\,& \mbfs{g}_t^\top \Gamma(\mbfs{\beta}_t) - \mbfs{b}_t^\top \mbfs{\beta}_t  + \varepsilon_t^2 \left(\mbfs{g}_t^\top \overline{\mbfs{x}} - \mbfs{b}_t^\top \overline{\mbfs{y}}\right)
- \left(\alpha_t \varepsilon_t^2 + \varepsilon_t^3\right)\mbfs{b}_t^\top \mbfs{\beta}_t.
\label{eq:lasteq_exposing}
\end{align}
In \eqref{eq:lasteq_exposing}, the term $\mbfs{g}_t^\top \Gamma({\mbfs{\beta}_t})  - \mbfs{b}_t^\top {\mbfs{\beta}_t}$ can be lower bounded using the assumption that $\Gamma$ is non-expansive:
\begin{equation}\label{gandGamma}
\mbfs{g}_t^\top \Gamma({\mbfs{\beta}_t})  - \mbfs{b}_t^\top {\mbfs{\beta}_t} = \sum_{i=1}^{n+m} \lambda_{[i,t]} \cdot \left( \Gamma(\mbfs{\beta}_{[i,t]}){}^\top \Gamma({\mbfs{\beta}_t}) - \mbfs{\beta}_{[i,t]}{}^\top {\mbfs{\beta}_t} \right) \geq 0.
\end{equation}
Similarly, for an arbitrary $k\in \{1, \dotsc, n+m\}$, the term $\mbfs{g}_t^\top \overline{\mbfs{x}} - \mbfs{b}_t^\top \overline{\mbfs{y}}$ in \eqref{eq:lasteq_exposing} can be lower bounded as follows: 
\begin{align}
 \mbfs{g}_t^\top \overline{\mbfs{x}}  - \mbfs{b}_t^\top \overline{\mbfs{y}} & = \sum_{i=1}^{n+m} \lambda_{[i,t]} \cdot \left( \Gamma(\mbfs{\beta}_{[i,t]}){}^\top \overline{\mbfs{x}} - \mbfs{\beta}_{[i,t]}{}^\top \overline{\mbfs{y}} \right)\nonumber\\
 & \ge \lambda_{[k,t]} \cdot \left( \Gamma(\mbfs{\beta}_{[k,t]}){}^\top \overline{\mbfs{x}} - \mbfs{\beta}_{[k,t]}{}^\top \overline{\mbfs{y}} \right).\label{eqSinglek}
\end{align}
Substitute \eqref{gandGamma} and \eqref{eqSinglek} into \eqref{eq:lasteq_exposing}, then multiply by $1/(\alpha_t \varepsilon_t^2 + \varepsilon_t^3) > 0$, to obtain
\begin{align}
0 \geq\,& \frac{\lambda_{[k,t]}}{\alpha_t + \varepsilon_t} \cdot \left(\Gamma(\mbfs{\beta}_{[k,t]}){}^\top \overline{\mbfs{x}} - \mbfs{\beta}_{[k,t]}{}^\top \overline{\mbfs{y}}\right)
- \mbfs{b}_t^\top \mbfs{\beta}_t.
\label{eq:lasteq_exposing2}
\end{align}

The term $\mbfs{b}_t^\top \mbfs{\beta}_t$ is bounded because $\|(-\mbfs{g}_t, \mbfs{b}_t)\| =\sqrt{2}$ and $\|\mbfs{\beta}_t\| = 1$.
For each $k$, the sequence $((\Gamma(\mbfs{\beta}_{[k,t]}), \mbfs{\beta}_{[k,t]}))_{t=1}^{\infty}$ is bounded because $\Gamma$ is continuous; thus, after possibly taking a subsequence, we assume this sequence converges. 
Note that the limit is of the form $(\Gamma(\mbfs{\beta}), \mbfs{\beta})$ for some $\mbfs{\beta} \in J$.
Similarly, the sequence $(\lambda_{[k,t]})_{t=1}^{\infty}$ is bounded by $\tau$; thus, after possibly taking a subsequence, we assume this sequence converges to some $\lambda_k$.
If $\lambda_k > 0$ and 
\[
\lim_{t\to \infty} \Gamma(\mbfs{\beta}_{[k,t]}){}^\top \overline{\mbfs{x}} - \mbfs{\beta}_{[k,t]}{}^\top \overline{\mbfs{y}} > 0,
\]
then the right hand side of \eqref{eq:lasteq_exposing2} goes to infinity, which is a contradiction.
Thus, if $\lambda_k > 0$ for some $k$, then the previous limit equals $0$.
 By the definition of $(\overline{\mbfs{x}},\overline{\mbfs{y}})$, it follows that
\[
\lim_{t\to \infty}  \left(\Gamma(\mbfs{\beta}_{[k,t]}),\mbfs{\beta}_{[k,t]}\right) =  \left(\Gamma(\overline{\mbfs{\beta}}), \overline{\mbfs{\beta}}\right).
\]
Hence,  $(\mbfs{g}_t, \mbfs{b}_t) \to (\Gamma(\overline{\mbfs{ \beta}}), \overline{\mbfs{ \beta}})$. 

It remains to prove that $r_t \to 0$.
We have
\begin{align*}
0\leq r_t \leq &\, -\mbfs{g}_t^\top \mbfs{x}_t + \mbfs{b}_t^\top \mbfs{y}_t \\
=&  \frac{-1}{\mbfs{a}{}^\top \widehat{\mbfs{x}}_t + \mbfs{d}{}^\top \widehat{\mbfs{y}}_t } \cdot (-\mbfs{g}_t^\top \widehat{\mbfs{x}}_t + \mbfs{b}_t^\top \widehat{\mbfs{y}}_t) && \text{by definition of $({\mbfs{x}}_t , {\mbfs{y}}_t )$} \\
\leq&  \frac{- \varepsilon_t^2 \left(\mbfs{g}_t^\top \overline{\mbfs{x}} - \mbfs{b}_t^\top \overline{\mbfs{y}}\right)
+ \left(\alpha_t \varepsilon_t^2 + \varepsilon_t^3\right)\mbfs{b}_t^\top \mbfs{\beta}_t }{-(\mbfs{a}{}^\top \widehat{\mbfs{x}}_t + \mbfs{d}{}^\top \widehat{\mbfs{y}}_t )} && \text{by \eqref{eq:lasteq_exposing} and \eqref{gandGamma}} \\
\leq &\, \frac{-\varepsilon_t^2 \left(\mbfs{g}_t^\top \overline{\mbfs{x}} - \mbfs{b}_t^\top \overline{\mbfs{y}}\right)
+ \left(\alpha_t \varepsilon_t^2 + \varepsilon_t^3\right)\mbfs{b}_t^\top \mbfs{\beta}_t}{\varepsilon_t 
- \varepsilon_t^2 \mbfs{a}^\top \overline{\mbfs{x}}  
- (\alpha_t\varepsilon_t^2+\varepsilon_t^3)\, \mbfs{d}{}^\top \mbfs{\beta}_t  
- \varepsilon_t^2 \mbfs{d}{}^\top\overline{\mbfs{y}}} && \text{by \eqref{ahatbound}}\\
= &\, \frac{-\varepsilon_t \left(\mbfs{g}_t^\top \overline{\mbfs{x}} - \mbfs{b}_t^\top \overline{\mbfs{y}}\right)
+ \left(\alpha_t \varepsilon_t + \varepsilon_t^2\right)\mbfs{b}_t^\top \mbfs{\beta}_t}{1 
- \varepsilon_t \mbfs{a}^\top \overline{\mbfs{x}}  
- (\alpha_t\varepsilon_t +\varepsilon_t^2)\, \mbfs{d}{}^\top \mbfs{\beta}_t  
- \varepsilon_t \mbfs{d}{}^\top\overline{\mbfs{y}}}.
\end{align*}
The latter expression goes to $0$ as $t\to\infty$.
Hence, $r_t \to 0$.
\end{proof}

\section{A proof of Theorem~\ref{thm:easycase}}\label{sec:Maximality_easycase}

Assume $\|\mbfs{a}\| \le \|\mbfs{d}\|\ = 1$, and let $\Gamma: D^m \to D^n$ be such that $C_{\Gamma}$ is a full-dimensional maximal $Q_h$-free set.
If $C^G_{\Gamma} \cap H$ is a full-dimensional $Q_g$-free set, then $\mbfs{a} \neq \Gamma(-\mbfs{d})$ according to Lemma~\ref{lemma:necessary_easy}.
It remains to prove that if $\mbfs{a} \neq \Gamma(-\mbfs{d})$, then $C^G_{\Gamma} \cap H$ is a full-dimensional maximal $Q_g$-free set.
Thus, assume that $\mbfs{a} \neq \Gamma(-\mbfs{d})$.

We will apply the maximality criterion of Theorem \ref{thm:maximalitycriterion}, for which we need three technical conditions to be met.

First, we need $C^G_\Gamma \cap H$ to be full-dimensional.
As $\Gamma$ is non-expansive, we have $(\Gamma(-\mbfs{d}), -\mbfs{d}) \in C_{\Gamma}$.
By the Cauchy-Schwarz Inequality, $\mbfs{a}{}^\top \Gamma(-\mbfs{d}) +(-\mbfs{d}){}^\top\overline{\mbfs{y}} \le 0$ with equality if and only if $\Gamma(-\mbfs{d}) = \mbfs{a}$, which we assume does not happen.
Hence, $C_\Gamma \cap H \subseteq C^G_{\Gamma} \cap H$ is full-dimensional by Lemma~\ref{lem:fulldim}.

Second, we need $C^G_{\Gamma} \cap H$ to be $Q_g$-free.
This follows from Lemma \ref{lemma:patching}. 

Third, we need a description $C^I_{\Gamma} \cap H$ of $C^G_{\Gamma} \cap H$ for which every inequality has an exposing sequence in $Q_g$ with respect to $C^I_{\Gamma}$.
By Lemma~\ref{lem:Implied0}, we have
\[
K = C^{G}_{\Gamma} \cap H=C^{G^{\circ}}_{\Gamma} \cap H = C^{\cl(G^{\circ})}_{\Gamma} \cap H.
\]
Apply Lemma \ref{lemma:exposedrays} with $J= \cl(G^{\circ})$ to obtain
\(
 K =  C^I_{\Gamma} \cap H,
\)
where $I \subseteq \cl(G^{\circ})$ is the set of $\overline{\mbfs{\beta}} \in \cl(G^{\circ})$ such that $(-\Gamma(\overline{\mbfs{\beta}}), \overline{\mbfs{\beta}})$ generates an exposed ray of $\cone(\{(-\Gamma(\mbfs{\beta}), \mbfs{\beta}) \,: \, \mbfs{\beta} \in \cl(G^{\circ})\})$.
By Lemma \ref{lemma:exposing_common}, the inequality $-\Gamma(\mbfs{\beta}){}^\top\mbfs{x}+\mbfs{\beta}{}^\top\mbfs{y} \le 0$ has an exposing sequence in $Q_g$ with respect to $C_\Gamma^I$ for each $\mbfs{\beta} \in I$.

Thus, the three conditions of Theorem \ref{thm:maximalitycriterion} are met. 
We conclude that $C^G_{\Gamma} \cap H$ is a maximal $Q_g$-free set.
\qed

\begin{rmk}\label{rm:GcircG}
According to Lemma~\ref{lem:Implied0}, $C^{G^{\circ}}_{\Gamma} \cap H = C^G_{\Gamma} \cap H$ in the setting of Theorem~\ref{thm:easycase}.
Thus, we can rewrite the theorem to use $C^{G^{\circ}}_{\Gamma}$ rather than $C^{G}_{\Gamma}$.
We choose to write the result in terms of $G$ because we find it better connects with Theorem~\ref{thm:hardcase}.  
\end{rmk}

\section{A proof of Theorem~\ref{thm:hardcase}}\label{sec:Maximality_hardcase}

Assume $\|\mbfs{d}\| < \|\mbfs{a}\|\ = 1$, and let $\Gamma: D^m \to D^n$ be such that $C_{\Gamma}$ is a full-dimensional maximal $Q_h$-free set.
If $C^G_{\Gamma} \cap H$ is a full-dimensional $Q_g$-free set, then $G = D^m$ according to Lemma~\ref{lemma:hardcase_necessary}.
It remains to prove that if $G = D^m$, then $C^G_{\Gamma} \cap H$ is a full-dimensional maximal $Q_g$-free set.
Thus, assume that $G = D^m$.
Hence, $C^G_{\Gamma} = C_{\Gamma}$.

We will apply the maximality criterion of Theorem \ref{thm:maximalitycriterion}, for which we need three technical conditions to be met.

First, we need $C^G_\Gamma \cap H = C_{\Gamma} \cap H$ to be full-dimensional.
The assumption $G = D^m$ implies that $-(-\mbfs{a}){}^\top \Gamma(\mbfs{\beta}) + \mbfs{d}{}^\top \mbfs{\beta}  \le 0$ for each $\mbfs{\beta} \in D^m$.
Thus, $(-\mbfs{a},\mbfs{d}) \in C_{\Gamma}$.
Furthermore, the inequality $\|\mbfs{d}\| < \|\mbfs{a}\| =1$ implies that $\alpha := -\mbfs{a}{}^\top \mbfs{a} + \mbfs{d}{}^\top \mbfs{d} < 0 $.
Apply Lemma \ref{lem:fulldim} with $(\overline{\mbfs{x}}, \overline{\mbfs{y}}) := \frac{1}{|\alpha|} \cdot (-\mbfs{a}, \mbfs{d})$ to conclude that $C^G_\Gamma \cap H$ is full-dimensional.

Second, we need $C^G_{\Gamma} \cap H$ to be $Q_g$-free.
This follows directly because $G = D^m$, and $C^{G}_{\Gamma} \cap H = C_{\Gamma} \cap H$ is $Q_g$-free.

Third, we need to show that, for each $\mbfs{\beta} \in G$, the inequality $-\Gamma(\mbfs{\beta}){}^\top\mbfs{x}+\mbfs{\beta}{}^\top\mbfs{y} \le 0$ has an exposing sequence in $Q_g$ with respect to $C^G_{\Gamma}$.
Proving this third step constitutes the remainder of our proof.

Let $I \subseteq G$ be the set of $\overline{\mbfs{\beta}} \in G$ such that $(-\Gamma(\overline{\mbfs{\beta}}), \overline{\mbfs{\beta}})$ generates an exposed ray of $\cone(\{(-\Gamma(\mbfs{\beta}), \mbfs{\beta}) \,: \, \mbfs{\beta} \in G\})$.
Apply Lemma \ref{lemma:exposedrays} with $J= G$ to conclude that $C^G_{\Gamma} = C^I_{\Gamma}$.

For each $\overline{\mbfs{\beta}} \in I \cap \cl(G^\circ)$, Lemma \ref{lemma:exposing_common} states that the inequality $-\Gamma(\overline{\mbfs{\beta}}){}^\top\mbfs{x}+\overline{\mbfs{\beta}}{}^\top\mbfs{y} \le 0$ has an exposing sequence in $Q_g$ with respect to $C_\Gamma^I $.
Thus, we only need to find an exposing sequence for each $\overline{\mbfs{\beta}} \in I \setminus \cl(G^\circ)$.

Let $\overline{\mbfs{\beta}} \in I \setminus \cl(G^\circ)$.
As $\overline{\mbfs{\beta}}$ defines an exposed ray of $\cone(\{(-\Gamma(\mbfs{\beta}), \mbfs{\beta}) \,: \, \mbfs{\beta} \in G\})$, and $G=D^m$, there exists a vector $(\overline{\mbfs{x}}, \overline{\mbfs{y}}) \in \mbb{R}^n\times \mbb{R}^m$ such that
\[
-\Gamma(\overline{\mbfs{\beta}})^\top \overline{\mbfs{x}} + \overline{\mbfs{\beta}}{}^\top \overline{\mbfs{y}}  = 0 > 
-\Gamma({\mbfs{\beta}})^\top \overline{\mbfs{x}} + \mbfs{\beta}{}^\top \overline{\mbfs{y}} \qquad \forall ~\mbfs{\beta}\in D^m \setminus\{  \overline{\mbfs{\beta}} \}.
\]
For $t \in \mbb{N}$, define the vectors 
\begin{equation}\label{eq:exposingsequencedef}
\begin{array}{rcl}
 \widetilde{\mbfs{x}}_t &:=&\displaystyle \Gamma(\overline{\mbfs{\beta}}) + \frac{1}{t^{3/2}}\cdot \overline{\mbfs{x}} - \frac{1}{t} \cdot \mbfs{a},\\[.2cm]
 \widetilde{\mbfs{y}}_t &:=&\displaystyle\left( 1 + \frac{1}{t^2} \right)\overline{\mbfs{\beta}} \;+\; \frac{1}{t^{3/2}}\cdot \overline{\mbfs{y}} + \frac{1}{t} \cdot \mbfs{d}.
\end{array}
\end{equation}
We have $\mbfs{a}{}^\top \Gamma(\overline{\mbfs{\beta}}) + \mbfs{d}{}^\top \overline{\mbfs{\beta}} = 0$ because $\overline{\mbfs{\beta}}\in I \subseteq G$ and $\overline{\mbfs{\beta}} \not\in \cl(G^\circ)$.
Thus,
\begin{align}
& \mbfs{a}{}^\top \widetilde{\mbfs{x}}_t  + \mbfs{d}{}^\top \widetilde{\mbfs{y}}_t\nonumber\\
  \;=&\; \mbfs{a}{}^\top \Gamma(\overline{\mbfs{\beta}}) + O\left(\frac{1}{t^{3/2}}\right) - \frac{1}{t} + \left( 1 + \frac{1}{t^2} \right) \mbfs{d}{}^\top \overline{\mbfs{\beta}} + O\left(\frac{1}{t^{3/2}}\right) + \frac{\|\mbfs{d}\|^2}{t}\nonumber\\
 \;=&\; \frac{\|\mbfs{d}\|^2 - 1}{t} + O\left(\frac{1}{t^{3/2}}\right)\label{negativeden}.
\end{align}
Hence, $\mbfs{a}{}^\top \widetilde{\mbfs{x}}_t  + \mbfs{d}{}^\top \widetilde{\mbfs{y}}_t < 0$ for large $t$ because $\|\mbfs{d}\| < 1$.
After taking a subsequence, we assume $\mbfs{a}{}^\top \widetilde{\mbfs{x}}_t + \mbfs{d}{}^\top \widetilde{\mbfs{y}}_t  < 0$ for all $t$.

For each $t$, define the following rescaled version of $(\widetilde{\mbfs{x}}_t , \widetilde{\mbfs{y}}_t)$:
\begin{equation}\label{eq:exposingsequencedef_rescaled}
(\mbfs{x}_t , \mbfs{y}_t) \; :=\; -\,\frac{1}{\,\mbfs{a}{}^\top \widetilde{\mbfs{x}}_t + \mbfs{d}{}^\top \widetilde{\mbfs{y}}_t\,}\, \cdot(\widetilde{\mbfs{x}}_t , \widetilde{\mbfs{y}}_t).
\end{equation}
Given that $\mbfs{a}{}^\top \widetilde{\mbfs{x}}_t + \mbfs{d}{}^\top \widetilde{\mbfs{y}}_t  < 0$, we have $(\mbfs{x}_t, \mbfs{y}_t)\in H$. 

We claim that if $t$ is large enough, then $(\mbfs{x}_t, \mbfs{y}_t)\in Q_h$.
For this, it suffices to show $\|\widetilde{\mbfs{x}}_t\|^2 - \|\widetilde{\mbfs{y}}_t\|^2 \leq 0$.
We have
\begin{align*}
\|\widetilde{\mbfs{x}}_t\|^2 & = 1 + O\left(\frac{1}{t^{3}}\right) + \frac{1}{t^2} + \frac{2}{t^{3/2}} \Gamma(\overline{\mbfs{\beta}})^\top \bar x + O\left(\frac{1}{t^{5/2}}\right) - \frac{2}{t} \Gamma(\overline{\mbfs{\beta}})^\top \mbfs{a}\\
& = 1 + \frac{1}{t^2} + \frac{2}{t^{3/2}} \Gamma(\overline{\mbfs{\beta}})^\top \overline{\mbfs{x}} - \frac{2}{t} \Gamma(\overline{\mbfs{\beta}})^\top \mbfs{a} + O\left(\frac{1}{t^{3}}\right)
\intertext{and}
\|\widetilde{\mbfs{y}}_t\|^2 & = 1+\frac{1}{t^4} + \frac{2}{t^2} + O\left(\frac{1}{t^{3}}\right) + \frac{\|\mbfs{d}\|^2}{t^2}  + O\left(\frac{1}{t^{5/2}}\right) \\
& \quad + \frac{2}{t^{3/2}}\overline{\mbfs{\beta}}{}^\top \overline{\mbfs{y}}  + \frac{2}{t^{7/2}} \overline{\mbfs{\beta}}{}^\top \overline{\mbfs{ y}} + \frac{2}{t} \mbfs{d}{}^\top \overline{\mbfs{ \beta}} + \frac{2}{t^3} \overline{\mbfs{\beta}}{}^\top \mbfs{d}.
\end{align*}
Therefore, 
\begin{align*}
\|\widetilde{\mbfs{x}}_t\|^2 - \|\widetilde{\mbfs{y}}_t\|^2
  \;= & \; \frac{-2}{t}\left( \Gamma(\overline{\mbfs{ \beta}})^\top \mbfs{a} + \mbfs{d}{}^\top \overline{\mbfs{ \beta}} \right) + \frac{2}{t^{3/2}} \left( \Gamma(\overline{\mbfs{ \beta}})^\top \overline{\mbfs{ x}} - \overline{\mbfs{\beta}}{}^\top \overline{\mbfs{ y}}\right) \\
& - \frac{1}{t^2} \left( 1 + \|\mbfs{d}\|^2\right) + O\left(\frac{1}{t^{5/2}}\right).
\end{align*}
By the definition of $\overline{\mbfs{ \beta}}$, we have $\Gamma(\overline{\mbfs{ \beta}})^\top \mbfs{a} + \mbfs{d}{}^\top \overline{\mbfs{ \beta}} = 0$.
Similarly, by the definition of $(\overline{\mbfs{ x}}, \overline{\mbfs{ y}})$, we have $\Gamma(\overline{\mbfs{ \beta}})^\top \overline{\mbfs{ x}} - \overline{\mbfs{\beta}}{}^\top \overline{\mbfs{ y}}= 0$.
Thus,
\[
t^2 \cdot (\|\widetilde{\mbfs{x}}_t\|^2 - \|\widetilde{\mbfs{y}}_t\|^2) =  -1 - \|\mbfs{d}\|^2 + o(1).
\]
Thus, $\|\widetilde{\mbfs{x}}_t\|^2 \leq \|\widetilde{\mbfs{y}}_t\|^2$ for large enough $t$.
After taking a subsequence, we assume $\|\widetilde{\mbfs{x}}_t\|^2 \leq \|\widetilde{\mbfs{y}}_t\|^2$ for each $t$.
Thus, $(\mbfs{x}_t, \mbfs{y}_t) \in Q_h$ for each $t$.
Consequently, $(\mbfs{x}_t, \mbfs{y}_t) \in Q_h \cap H = Q_g$ for each $t$.

The final step is to prove that $((\mbfs{x}_t, \mbfs{y}_t))_{t=1}^{\infty}$ is an exposing sequence for the inequality $-\Gamma(\overline{\mbfs{\beta}}){}^\top\mbfs{x}+\overline{\mbfs{\beta}}{}^\top\mbfs{y} \le 0$ with respect to $C^G_{\Gamma}$.

Take any sequence $((-\mbfs{g}_t, \mbfs{b}_t, r_t))_{t=1}^{\infty}$ in $\mbb{R}^n\times \mbb{R}^m\times \mbb{R}$ such that, for each $t$, we have $\|(-\mbfs{g}_t, \mbfs{b}_t)\| = \|(-\Gamma(\overline{\mbfs{\beta}}), \overline{\mbfs{\beta}})\| = \sqrt{2}$, and $-\mbfs{g}_t^\top \mbfs{x} + \mbfs{b}_t^\top \mbfs{y} \leq r_t$ is valid for $C_\Gamma$ and separates $(\mbfs{x}_t, \mbfs{y}_t)$, i.e.,
\[
-\mbfs{g}_t^\top \mbfs{x}_t + \mbfs{b}_t^\top \mbfs{y}_t \geq r_t\quad \forall~ t.
\]
We finish the proof of the theorem by showing $\lim_{t\to\infty}( \mbfs{g}_t, \mbfs{b}_t , r_t) =( \Gamma(\overline{\mbfs{ \beta}}), \overline{\mbfs{ \beta}},0)$.

For each $t$, we have $r_t \ge 0$ because $(\mbfs{0}, \mbfs{0}) \in C_{\Gamma}$.
Thus, $-\mbfs{g}_t^\top \mbfs{x} + \mbfs{b}_t^\top \mbfs{y} \leq 0$ is valid for $C_\Gamma$.
By Lemma \ref{lemma:lambdabounded} with $J = D^m$, we can write 
\[
\mbfs{g}_t = \sum_{i=1}^{n+m} \lambda_{[i,t]} \Gamma(\mbfs{\beta}_{[i,t]}) \quad \text{and}\quad
\mbfs{b}_t = \sum_{i=1}^{n+m} \lambda_{[i,t]} \mbfs{\beta}_{[i,t]},
\]
where each $\mbfs{\beta}_{[i,t]}\in  D^m$ and $\lambda_{[i,t]} \in [0,\tau]$ for some upper bound $\tau$ that only depends on $\Gamma$.

The inequality $r_t \ge 0$ also implies $-\mbfs{g}_t^\top \mbfs{x}_t + \mbfs{b}_t^\top \mbfs{y}_t \geq 0$.
Thus
\begin{align}
0 \geq& \, \mbfs{g}_t^\top \widetilde{\mbfs{x}}_t - \mbfs{b}_t^\top \widetilde{\mbfs{y}}_t \nonumber\\
=&\, \mbfs{g}_t^\top \Gamma(\overline{\mbfs{\beta}}) + \frac{1}{t^{3/2}} \mbfs{g}_t^\top \overline{\mbfs{x}}  - \frac{1}{t} \mbfs{g}_t^\top \mbfs{a}   - \left(1 + \frac{1}{t^2}\right)\mbfs{b}_t^\top \overline{\mbfs{\beta}} - \frac{1}{t^{3/2}} \mbfs{b}_t^\top \overline{\mbfs{y}} - \frac{1}{t}\mbfs{b}_t^\top \mbfs{d} \nonumber\\
=&\, \mbfs{g}_t^\top \Gamma(\overline{\mbfs{\beta}}) - \mbfs{b}_t^\top \overline{\mbfs{\beta}}  + \frac{1}{t^{3/2}}\left( \mbfs{g}_t^\top \overline{\mbfs{x}} -  \mbfs{b}_t^\top \overline{\mbfs{y}}\right) - \frac{1}{t} \left( \mbfs{g}_t^\top \mbfs{a}  + \mbfs{b}_t^\top \mbfs{d}\right)  -\frac{1}{t^2} \mbfs{b}_t^\top \overline{\mbfs{\beta}}.\label{eq:last}
\end{align}
In \eqref{eq:last}, the term $\mbfs{g}_t^\top \Gamma(\overline{\mbfs{\beta}}) - \mbfs{b}_t^\top \overline{\mbfs{\beta}}$ can be lower bounded as
\begin{equation}\label{eq:last2}
\mbfs{g}_t^\top \Gamma(\overline{\mbfs{\beta}}) - \mbfs{b}_t^\top \overline{\mbfs{\beta}} = \sum_{i=1}^{n+m} \lambda_{[i,t]} \left( \Gamma(\mbfs{\beta}_{[i,t]}){}^\top \Gamma(\overline{\mbfs{\beta}})  - \mbfs{\beta}_{[i,t]}{}^\top \overline{\mbfs{ \beta}} \right) \geq 0,
\end{equation}
where the inequality follow because $\Gamma$ is non-expansive. 
Similarly, in \eqref{eq:last}, the term $\mbfs{g}_t^\top \mbfs{a} + \mbfs{b}_t^\top \mbfs{d} $ can be upper bounded as
\begin{equation}\label{eq:last3}
\mbfs{g}_t^\top \mbfs{a} + \mbfs{b}_t^\top \mbfs{d} = \sum_{i=1}^{n+m} \lambda_{[i,t]} \left( \Gamma(\mbfs{\beta}_{[i,t]}){}^\top \mbfs{a}  + \mbfs{\beta}_{[i,t]}{}^\top \mbfs{d} \right) \leq 0
\end{equation}
because $\mbfs{a}{}^\top \Gamma(\mbfs{\beta}) + \mbfs{d}{}^\top \mbfs{\beta} \leq 0$ for all $\mbfs{\beta}  \in D^m$.
Finally, using an arbitrary $k\in \{1, \dotsc, n+m\}$, the term $ \mbfs{g}_t^\top \overline{\mbfs{x}}  - \mbfs{b}_t^\top \overline{\mbfs{y}}$ in \eqref{eq:last} can be lower bounded as
\begin{align*}
 \mbfs{g}_t^\top \overline{\mbfs{x}}  - \mbfs{b}_t^\top \overline{\mbfs{y}} =& \sum_{i=1}^{n+m} \lambda_{[i,t]} \left( \Gamma(\mbfs{\beta}_{[i,t]}){}^\top \overline{\mbfs{x}} - \mbfs{\beta}_{[i,t]}{}^\top \overline{\mbfs{y}} \right) \\
\geq& \lambda_{[k,t]} \left( \Gamma(\mbfs{\beta}_{[k,t]}){}^\top \overline{\mbfs{x}} - \mbfs{\beta}_{[k,t]}{}^\top \overline{\mbfs{y}} \right).
\end{align*}
Substituting these bounds into \eqref{eq:last}, we get
\[
0 \geq \frac{1}{t^{3/2}}\lambda_{[k,t]} \left( \Gamma(\mbfs{\beta}_{[k,t]}){}^\top \overline{\mbfs{x}} - \mbfs{\beta}_{[k,t]}{}^\top \overline{\mbfs{y}} \right)  -\frac{1}{t^2} \mbfs{b}_t^\top \overline{\mbfs{\beta}}.
\]
If we multiply this by $t^2$, then
\begin{equation}\label{eq:lastineq_exposing_hard}
0 \geq  \lambda_{[k,t]} \sqrt{t} \left( \Gamma(\mbfs{\beta}_{[k,t]}){}^\top \overline{\mbfs{x}} - \mbfs{\beta}_{[k,t]}{}^\top \overline{\mbfs{y}} \right)  - \mbfs{b}_t^\top \overline{\mbfs{\beta}} .
\end{equation}
The sequence $(\mbfs{b}_t^\top \overline{\mbfs{\beta}})_{t=1}^{\infty}$ is bounded because $(\mbfs{b}_t)_{t=1}^{\infty}$ is bounded.  
For each $k$, the sequence $((\Gamma(\mbfs{\beta}_{[k,t]}), \mbfs{\beta}_{[k,t]}))_{t=1}^{\infty}$ is bounded because $\Gamma$ is continuous; thus, after possibly taking a subsequence, we assume this sequence converges. 
Note that the limit is of the form $(\Gamma(\mbfs{\beta}), \mbfs{\beta})$ for some $\mbfs{\beta} \in D^m$.
Similarly, the sequence $(\lambda_{[k,t]})_{t=1}^{\infty}$ is bounded by $\tau$; thus, after possibly taking a subsequence, we assume this sequence converges to some $\lambda_k$.
If $\lambda_k > 0$ and 
\[
\lim_{t\to \infty} \Gamma(\mbfs{\beta}_{[k,t]}){}^\top \overline{\mbfs{x}} - \mbfs{\beta}_{[k,t]}{}^\top \overline{\mbfs{y}} > 0,
\]
then the right hand side of \eqref{eq:lastineq_exposing_hard} goes to infinity, which is a contradiction.
Thus, if $\lambda_k > 0$ for some $k$, then the previous limit equals $0$.
By the definition of $(\overline{\mbfs{x}},\overline{\mbfs{y}})$, it follows that
\[
\lim_{t\to \infty}  \left(\Gamma(\mbfs{\beta}_{[k,t]}),\mbfs{\beta}_{[k,t]}\right) =  \left(\Gamma(\overline{\mbfs{\beta}}), \overline{\mbfs{\beta}}\right).
\]
Hence,  $(\mbfs{g}_t, \mbfs{b}_t) \to (\Gamma(\overline{\mbfs{ \beta}}), \overline{\mbfs{ \beta}})$. 

It remains to prove that $r_t \to 0$.
We have
\begin{align*}
0\leq r_t \leq &\, -\mbfs{g}_t^\top \mbfs{x}_t + \mbfs{b}_t^\top \mbfs{y}_t \\
=&  \frac{-1}{\mbfs{a}{}^\top \widetilde{\mbfs{x}}_t + \mbfs{d}{}^\top \widetilde{\mbfs{y}}_t } (-\mbfs{g}_t^\top \widetilde{\mbfs{x}}_t + \mbfs{b}_t^\top \widetilde{\mbfs{y}}_t) && \text{by the definition of $({\mbfs{x}}_t , {\mbfs{y}}_t )$} \\
\leq&  \frac{ - \frac{1}{t^{3/2}}\left( \mbfs{g}_t^\top \overline{\mbfs{x}} +  \mbfs{b}_t^\top \overline{\mbfs{y}}\right)  -\frac{1}{t^2} \mbfs{b}_t^\top \overline{\mbfs{\beta}} }{-(\mbfs{a}{}^\top \widetilde{\mbfs{x}}_t + \mbfs{d}{}^\top \widetilde{\mbfs{y}}_t )}   &&\text{by \eqref{eq:last}, \eqref{eq:last2}, and \eqref{eq:last3}}\\
=&  \frac{ - \frac{1}{t^{1/2}}\left( \mbfs{g}_t^\top \overline{\mbfs{x}} +  \mbfs{b}_t^\top \overline{\mbfs{y}}\right)  -\frac{1}{t} \mbfs{b}_t^\top \overline{\mbfs{\beta}} }{-t (\mbfs{a}{}^\top \widetilde{\mbfs{x}}_t + \mbfs{d}{}^\top \widetilde{\mbfs{y}}_t )} .
\end{align*}
The last numerator goes to $0$ as $t\to\infty$, while the denominator goes to $1-\|\mbfs{d}\|^2 \neq 0$ by \eqref{negativeden}.
Hence, $r_t \to 0$.
\hfill \qed

\section*{Acknowledgements}

J. Paat was supported by a Natural Sciences and Engineering Research Council of Canada Discovery Grant [RGPIN-2021-02475].
G. Muñoz was supported by the Chilean Agency of Research and Development (ANID) through Fondecyt Grant 1231522.

\appendix

\bibliographystyle{plain}
\bibliography{../references}

\begin{thebibliography}{10}

\bibitem{AJ2013}
K.~Andersen and {A.N.} Jensen.
\newblock Intersection cuts for mixed integer conic quadratic sets.
\newblock In M.~Goemans and J.~Correa, editors, {\em Integer Programming and
  Combinatorial Optimization}, pages 37--48. Springer Berlin Heidelberg,
  Berlin, Heidelberg, 2013.

\bibitem{ALWW2007}
K.~Andersen, Q.~Louveaux, R.~Weismantel, and {L.A.} Wolsey.
\newblock Cutting planes from two rows of the simplex tableau.
\newblock In M.~Fischetti and D.~P. Williamson, editors, {\em Proceedings of
  Integer Programming and Combinatorial Optimization (IPCO)}, pages 1--15,
  Berlin, Heidelberg, 2007. Springer Berlin Heidelberg.

\bibitem{averkov2013maximal}
G.~Averkov.
\newblock On maximal {S}-free sets and the {H}elly number for the family of
  {S}-convex sets.
\newblock {\em SIAM Journal on Discrete Mathematics}, 27(3):1610--1624, 2013.

\bibitem{A2013}
G.~Averkov.
\newblock A proof of {L}ov\'{a}sz's theorem on maximal lattice-free sets.
\newblock {\em Contributions to Algebra and Geometry}, 54:105--109, 2013.

\bibitem{ABP2018}
G.~Averkov, A.~Basu, and J.~Paat.
\newblock Approximation of corner polyhedra with families of intersection cuts.
\newblock {\em {SIAM} Journal on Optimization}, 28(1):904--929, 2018.

\bibitem{BOW2016}
M.~Baes, T.~Oertel, and R.~Weismantel.
\newblock Duality for mixed-integer convex minimization.
\newblock {\em {Mathematical Programming}}, 158:547--564, 2016.

\bibitem{B1971}
E.~Balas.
\newblock Intersection cuts - a new type of cutting planes for integer
  programming.
\newblock {\em Operations Research}, 19(1):19--39, 1971.

\bibitem{BCCWW2017}
A.~Basu, M.~Conforti, G.~Cornu{\'e}jols, R.~Weismantel, and S.~Weltge.
\newblock Optimality certificates for convex minimization and {H}elly numbers.
\newblock {\em Operations Research Letters}, 45(6):671--674, 2017.

\bibitem{BCCZ2010}
A.~Basu, M.~Conforti, G.~Cornu\'{e}jols, and G.~Zambelli.
\newblock Maximal lattice-free convex sets in linear subspaces.
\newblock {\em Mathematics of Operations Research}, 35(3):704--720, 2010.

\bibitem{basu2010minimal}
A.~Basu, M.~Conforti, G.~Cornu{\'e}jols, and G.~Zambelli.
\newblock Minimal inequalities for an infinite relaxation of integer programs.
\newblock {\em SIAM Journal on Discrete Mathematics}, 24(1):158--168, 2010.

\bibitem{BDP2019}
A.~Basu, {S.S.} Dey, and J.~Paat.
\newblock Nonunique lifting of integer variables in minimal inequalities.
\newblock {\em {SIAM} Journal on Discrete Mathematics}, 33(2):755--783, 2019.

\bibitem{BCM2019}
D.~Bienstock, C.~Chen, and G.~Mu{\~n}oz.
\newblock Intersection cuts for polynomial optimization.
\newblock In A.~Lodi and V.~Nagarajan, editors, {\em Integer Programming and
  Combinatorial Optimization}, pages 72--87, Cham, 2019. Springer International
  Publishing.

\bibitem{BCM2020}
D.~Bienstock, C.~Chen, and G.~Mu{\~n}oz.
\newblock Outer-product-free sets for polynomial optimization and oracle-based
  cuts.
\newblock {\em Mathematical Programming}, 183:105--148, 2020.

\bibitem{chmiela2022implementation}
A.~Chmiela, G.~Mu{\~n}oz, and F.~Serrano.
\newblock On the implementation and strengthening of intersection cuts for
  {QCQP}s.
\newblock {\em Mathematical Programming}, 197:549--586, 2023.

\bibitem{chmiela2025monoidal}
A.~Chmiela, G.~Mu{\~n}oz, and F.~Serrano.
\newblock Monoidal strengthening and unique lifting in {MIQCP}s.
\newblock {\em Mathematical Programming}, 210(1):189--222, 2025.

\bibitem{CCDLM2014}
M.~Conforti, G.~Cornu\'{e}jols, A.~Daniilidis, C.~Lemar\'{e}chal, and
  J.~Malick.
\newblock Cut-generating functions and {S}-free sets.
\newblock {\em Mathematics of Operations Research}, 40(2):276--301, 2015.

\bibitem{CCZ2014}
M.~Conforti, G.~Cornu{\'e}jols, and G.~Zambelli.
\newblock {\em Integer Programming}.
\newblock Springer, 2014.

\bibitem{conforti2016maximal}
M.~Conforti and M.~{Di Summa}.
\newblock Maximal {S}-free convex sets and the {H}elly number.
\newblock {\em SIAM Journal on Discrete Mathematics}, 30(4):2206--2216, 2016.

\bibitem{fischetti2016intersection}
M.~Fischetti, I.~Ljubi{\'c}, M.~Monaci, and M.~Sinnl.
\newblock Intersection cuts for bilevel optimization.
\newblock In {\em International conference on integer programming and
  combinatorial optimization}, pages 77--88. Springer, 2016.

\bibitem{fischetti2020branch}
M.~Fischetti and M.~Monaci.
\newblock A branch-and-cut algorithm for mixed-integer bilinear programming.
\newblock {\em European Journal of Operational Research}, 282(2):506--514,
  2020.

\bibitem{L1989}
L.~Lov\'{a}sz.
\newblock Geometry of numbers and integer programming.
\newblock In M.Iri and K.~Tanabe, editors, {\em Mathematical Programming:
  Recent Developments and Applications}, pages 177 -- 201. Kluwer Academic
  Publishers, 1989.

\bibitem{MKV2016}
S.~Modaresi, {M.R.} K{\i}l{\i}n{\c{c}}, and {J.P.} Vielma.
\newblock {Intersection cuts for nonlinear integer programming: convexification
  techniques for structured sets}.
\newblock {\em {Mathematical Programming}}, 155:575--611, 2016.

\bibitem{munoz2025characterization}
G.~Mu{\~n}oz, J.~Paat, and F.~Serrano.
\newblock A characterization of maximal homogeneous-quadratic-free sets.
\newblock {\em Mathematical Programming}, 210(1):641--668, 2025.

\bibitem{MS2020}
G.~Mu{\~n}oz and F.~Serrano.
\newblock Maximal quadratic-free sets.
\newblock In {\em Integer Programming and Combinatorial Optimization: 21st
  International Conference, IPCO 2020, London, UK, June 8–10, 2020,
  Proceedings}, page 307–321, Berlin, Heidelberg, 2020. Springer-Verlag.

\bibitem{MS2022}
G.~Mu{\~n}oz and F.~Serrano.
\newblock Maximal quadratic-free sets.
\newblock {\em Mathematical Programming}, 192:229--270, 2022.

\bibitem{PSS2022}
J.~Paat, M.~Schl{\"o}ter, and E.~Speakman.
\newblock Constructing lattice-free gradient polyhedra in dimension two.
\newblock {\em Mathematical Programming}, 192(1):293--317, 2022.

\bibitem{R1970}
{R.T.} Rockafellar.
\newblock {\em Convex analysis}.
\newblock Princeton {U}niversity press, 1970.

\bibitem{serrano2019intersection}
F.~Serrano.
\newblock Intersection cuts for factorable minlp.
\newblock In {\em International Conference on Integer Programming and
  Combinatorial Optimization}, pages 385--398. Springer, 2019.

\bibitem{T1964}
H.~Tuy.
\newblock Concave minimization under linear constraints with special structure.
\newblock {\em {Doklady Akademii Nauk}}, 159:32--35, 1964.

\bibitem{xu2025cutting}
L.~Xu, C.~D'Ambrosio, L.~Liberti, and S.~Haddad-Vanier.
\newblock Cutting planes for signomial programming.
\newblock {\em SIAM Journal on Optimization}, 35(2):899--926, 2025.

\bibitem{xu2025submodular}
L.~Xu and L.~Liberti.
\newblock Submodular maximization and its generalization through an
  intersection cut lens.
\newblock {\em Mathematical Programming}, 211(1):341--377, 2025.

\end{thebibliography}

\end{document}